\documentclass[11pt, reqno]{amsart}
\usepackage{amssymb, hyperref}
\usepackage{amsthm}
\usepackage{tikz}
\usepackage{tikz}
\usepackage{amsmath}
\usetikzlibrary{arrows.meta}
\usepackage[all,2cell]{xy}
\usepackage{verbatim}
\usepackage{xcolor,soul}
\usepackage{hyperref}
\usepackage{mathrsfs}
\usepackage{csquotes}
\usepackage{epigraph}
\numberwithin{equation}{section}
\usepackage[pagewise,mathlines]{lineno}

\newtheorem{theorem}{Theorem}[section]
\newtheorem{corollary}[theorem]{Corollary}
\newtheorem{definition}[theorem]{Definition}
\newtheorem{lemma}[theorem]{Lemma}

\newtheorem{proposition}[theorem]{Proposition}

\theoremstyle{remark}
\newtheorem{remark}[theorem]{Remark}
\newtheorem*{theorem*}{Theorem}
\newtheorem*{corollary*}{\bf Corollary}

\theoremstyle{remark}
\newtheorem{example}[theorem]{Example}

\title[The Gromov width of generalized Bott-Samelson manifolds]{The Gromov width of generalized Bott-Samelson manifolds}

\author[B.~N.~Chary]{Narasimha Chary Bonala}
\address{Narasimha Chary Bonala\\
Department of Mathematics, Indian Institute of Technology Kanpur,
U.P. India, 208016.
}
\email{chary@iitk.ac.in}

\author[Y. Singh]{Yogendra Singh}
\address{Yogendra Singh\\ Department of Mathematics, Indian Institute of Technology Kanpur,
U.P. India, 208016.}
\email{ysingh23@iitk.ac.in}

\thanks{}

\keywords{Gromov width, generalized Bott-Samelson varieties, Newton-Okounkov body and minimal rational curves}

\subjclass[2020]{53D05, 14M15, 14L30, 14M25} 

\begin{document}


\begin{abstract}
We study the Gromov width of smooth generalized Bott-Samelson varieties, a class of projective varieties constructed by Perrin in \cite{Per07} as a generalization of classical Bott-Samelson resolutions of Schubert varieties. We show that the Gromov width of such a variety equipped with a rational K\"ahler form is given by the symplectic area of its minimal rational curves. As a consequence, we obtain upper bounds for the Seshadri constants of these varieties with respect to ample line bundles.
\end{abstract}


\maketitle
\section{Introduction}

The \emph{Gromov width} $w_G(M, \omega)$ of a $2n$-dimensional symplectic manifold $(M, \omega)$ is a symplectic invariant that measures the size of the largest standard symplectic ball that can be symplectically embedded into $(M, \omega)$. More precisely, it is defined as 
\[
w_G(M, \omega) := sup\{ a: (B^{2n}(\sqrt{a/\pi}),\omega_{st})~\text{symplectically embeds into}~ (M,\omega)\},
\]
where $(B^{2n}(r), \omega_{st})$ is the ball of radius $r$ centered at the origin in $\mathbb R^{2n}$ equipped with the standard symplectic form.
Darboux’s theorem ensures that this symplectic invariant is strictly positive for all symplectic manifolds. The concept of Gromov width originated in Gromov’s foundational work on symplectic rigidity, where it played a central role in the proof of the non-squeezing theorem \cite{Gro85}. Since then, the Gromov width has emerged as a fundamental invariant in symplectic geometry, and substantial effort has been devoted to computing or bounding it in various geometric settings; see, for instance, \cite{Bir01, KT05, LMZ15, Cas16, FLP18, HLS21, BCF24a, BCF24b} and the references therein.

In this paper, we estimate the Gromov width of a certain class of smooth complex projective varieties, known as \emph{smooth generalized Bott-Samelson varieties} $\widehat{X}(\widehat{w})$, introduced by Perrin in \cite{Per07}.
We briefly recall them now. A distinguished class of subvarieties of flag varieties is known as \emph{Schubert varieties}, which play a central role in the geometry of flag varieties. In general, Schubert varieties are not smooth, but they admit natural desingularizations called \emph{Bott-Samelson varieties}. These were originally introduced by Bott and Samelson \cite{BS58} in a differential-geometric and topological settings, and later adapted independently by Demazure \cite{Dem74} and Hansen \cite{Han73} to the algebro-geometric context. A Bott-Samelson variety $Z(\tilde{w})$ can be viewed as an iterated $\mathbb{P}^1$-bundle associated with a reduced expression $\tilde{w}$ of an element $w$ in the Weyl group. The geometry of these varieties depends very much on the chosen expression of the Weyl group element corresponding to the Schubert variety (see, for example, \cite[Page 32]{BKP15} and \cite{BK17}).
 The Bott-Samelson resolutions are rarely small: fibers often have large dimensions, and automorphisms of the target need not lift to the resolution (see \cite[§7]{BKP15} for explicit examples).
In contrast, \emph{generalized} Bott-Samelson varieties include small resolutions of Schubert varieties constructed by Zelevinsky \cite{Zel83}, by Sankaran and Vanchinathan \cite{SV94, SV95}, and by Perrin in full generality (see \cite[Cor.~7.9]{Per07}). They are constructed as iterated fibrations over Schubert varieties, with fibers themselves being isomorphic to Schubert varieties. In this paper, we consider \emph{smooth generalized Bott-Samelson varieties} $\widehat{X}(\widehat{w})$ associated with admissible generalized reduced decompositions of Weyl group elements $w$; see Section~\ref{GBSV} for the definition and further details.

Our main result establishes that the Gromov width of a smooth generalized Bott-Samelson variety coincides with the symplectic area of its minimal rational curves. This provides a direct and computable formula for the Gromov width, extending earlier results of \cite{BCF24a} for Bott-Samelson varieties. Gromov widths are closely related to Seshadri constants, the key invariants of line bundles in algebraic geometry that measure local positivity (see \cite[Chapter 5]{Laz04} for more details). Our computation of the Gromov width yields estimates for the Seshadri constants of ample line bundles on smooth generalized Bott-Samelson varieties.

Our approach proceeds in two parts. First, using \cite[Theorem~1.1]{BCF24b}, we derive an upper bound for the Gromov width in terms of the symplectic area of minimal rational curves (see Corollary \ref{cor:upperbound}). Using a result of Brion and Kannan~\cite{BK21} on the classification of $T$-stable curves on $\widehat{X}(\widehat{w})$, we obtain a bijection between the $T$-stable curves of $\widehat{X}(\widehat{w})$ and those of $Z(\tilde{w})$ (see Lemma~\ref{lem:curves}). Moreover, in~\cite{BK21} they also characterize the minimal rational curves in this setting under certain assumptions. Building on these results, we derive the desired upper bound.

To obtain a matching lower bound, we employ symplectic embedding techniques, exploiting toric degenerations and Newton-Okounkov bodies. In particular, we construct explicit symplectic embeddings of balls into appropriate momentum polytopes associated with these varieties. This method generalizes to broader classes of projective varieties and has been effectively used by several authors; see, for instance, \cite{KK19} for a survey. Our construction uses unimodular simplices contained in Newton-Okounkov bodies, which in our case coincide with generalized string polytopes as described in \cite{Fujita} (also see \cite[Remark 4.2]{BCF24a}). We note that the Newton-Okounkov bodies of the generalized Bott-Samelson variety $\widehat{X}(\widehat{w})$ with respect to a very ample line bundle $\mathcal{L}$ are identical to those of the Bott-Samelson variety $Z(\tilde{w})$ for the pullback line bundle $\mathcal{L}'$ of $\mathcal L$ (see Lemma~\ref{lem:NObody}). This allows us to adapt the techniques of \cite{BCF24a} to our setting, thereby constructing a simplex whose size matches the minimal symplectic area of the minimal rational curves in $\widehat{X}(\widehat{w})$. This leads to our main theorem.
\begin{theorem}\label{thm:main}    
Let $\widehat{X}(\widehat{w})$ be a smooth generalized Bott-Samelson variety equipped with a rational K\"ahler $2$-form $\omega$. Then the Gromov width is given by
\[
w_{G}(\widehat{X}(\widehat{w}), \omega) = \min\left\{\, \int_C \omega \;\middle|\; C \text{ is a minimal rational curve in }
\widehat{X}(\widehat{w})\, \right\}.
\]
\end{theorem}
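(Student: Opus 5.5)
The proof splits into the two inequalities, as outlined in the introduction. Since $\omega$ is rational, we first rescale so that $[\omega] = c_1(\mathcal L)$ for an ample, and after further scaling very ample, line bundle $\mathcal L$ on $\widehat{X}(\widehat{w})$. Both sides of the claimed identity scale linearly, so it suffices to treat this integral case. Write $\pi\colon Z(\tilde w)\to \widehat{X}(\widehat{w})$ for the natural birational $T$-equivariant morphism from the Bott--Samelson variety attached to the reduced expression $\tilde w$ obtained by refining $\widehat{w}$, and set $\mathcal L' = \pi^*\mathcal L$.

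\textbf{Upper bound.} We apply \cite[Theorem~1.1]{BCF24b}. For a smooth projective variety with a torus action and a rational K\"ahler form, it bounds the Gromov width by $\int_C\omega$ for rational curves $C$ through a general point, provided these curves have nonzero Gromov--Witten-type counts; minimal rational curves (minimal degree families dominating the variety) satisfy this. To make the bound computable, we use the Brion--Kannan classification \cite{BK21} of $T$-stable curves, together with Lemma~\ref{lem:curves}, which gives a bijection between $T$-stable curves in $\widehat{X}(\widehat{w})$ and in $Z(\tilde w)$. This identifies the classes of minimal rational curves with those of certain $T$-stable curves, namely the fibers of the last Schubert-variety fibration and their translates. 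For such a curve $C$, $\int_C\omega = \deg(\mathcal L|_C)$ is read off from the weights of $\mathcal L$ at the two $T$-fixed endpoints. This yields Corollary~\ref{cor:upperbound}: $w_G \le m := \min_C \int_C\omega$.

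\textbf{Lower bound.} We use the toric degeneration / Newton--Okounkov body method \cite{KK19}. If the Newton--Okounkov body $\Delta(\widehat{X}(\widehat{w}),\mathcal L)$ associated to a valuation yielding a toric degeneration contains a unimodular-equivalent image of the open simplex of size $m$, then a ball of capacity $m$ embeds symplectically, so $w_G\ge m$. By Lemma~\ref{lem:NObody}, this body coincides with $\Delta(Z(\tilde w),\mathcal L')$. By \cite{Fujita}, the latter is a generalized string polytope: explicit inequalities in coordinates $(t_1,\dots,t_N)$ indexed by the letters of $\tilde w$, with right-hand sides given by the weights $\lambda_1,\dots,\lambda_r$ defining $\mathcal L'$. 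Because $\mathcal L'$ is pulled back from $\widehat X(\widehat w)$, it is only nef on $Z(\tilde w)$, and some $\lambda_j$ (those attached to the interior letters of each block of $\widehat{w}$) vanish. Following \cite{BCF24a}, we locate a vertex of the string polytope and apply a unimodular change of coordinates there so that the defining inequalities near that vertex become $t_i\ge 0$ together with an inequality $\sum t_i \le m'$. Here $m'$ is the minimum, over the blocks of $\widehat{w}$, of sums of the relevant $\langle\lambda,\beta^\vee\rangle$.

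\textbf{Main obstacle.} The key step is showing that $m'=m$. That is, the simplex produced inside the degenerate string polytope must have size equal to the minimal $\omega$-area computed from the Brion--Kannan description. In \cite{BCF24a}, every $\lambda_j>0$, so each coordinate direction contributes a clean edge. Here, the vanishing $\lambda_j$ collapse edges, and within each block the constraints mix coordinates. To handle this, I would proceed block by block, treating each block as a Schubert variety $P_{\beta}/P$-fiber. I would choose the coordinate change so that the simplex edges correspond exactly to the $T$-stable curves of Lemma~\ref{lem:curves} through the fixed point at that vertex. Their lengths are then the degrees $\int_C\omega$, and the minimum edge length is $m$. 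Combined with the upper bound, this gives the equality asserted in Theorem~\ref{thm:main}.
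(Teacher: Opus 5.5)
Your overall plan matches the paper's. You reduce to $\omega=\omega_{\mathcal L}$ with $\mathcal L$ very ample and get $w_G\le\min_C\int_C\omega$ directly from \cite[Theorem~1.1]{BCF24b}; that step needs no identification of minimal curves. You then get the lower bound from Kaveh's criterion \cite[Corollary~11.4]{KK19} applied to the body of Lemma~\ref{lem:NObody}.

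\textbf{The gap.} The gap is the step you flag yourself. Saying you would choose coordinates so that ``the simplex edges correspond exactly to the $T$-stable curves'' and ``their lengths are then the degrees'' restates what must be proved. The route you sketch, a block-by-block analysis of Fujita's inequalities aiming at a description $\{t_i\ge0,\ \sum t_i\le m'\}$ near a vertex, is not how the paper does it. One only needs the body to \emph{contain} a simplex.

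\textbf{The missing idea: explicit sections.} The missing idea is to exhibit sections whose valuations are the vertices of the simplex.
\begin{enumerate}
\item Take $f_0$ dual to the extremal vector $v_0$ and $\phi_0=\Phi_{\mathbf i,\mathbf n}(f_0)$. This section does not vanish on $B\tilde x$, so $0=v_\beta(\phi_0/\phi_0)$ lies in the body.
\item Put $\phi_j=\Phi_{\mathbf i,\mathbf n}(F_{\beta_j}^{\ell_j}f_0)$.
\item The roots $s_{i_{k+1}}\cdots s_{i_{j-1}}(\alpha_{i_j})$ and $s_{i_q}\cdots s_{i_{j+1}}(\alpha_{i_j})$ are positive (Lemma~\ref{+roots}). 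Hence $E_{\beta_j}$ kills the first $j-1$ tensor factors of $v_0$, and $E_{\beta_j}^{\ell_j}v_0\neq 0=E_{\beta_j}^{\ell_j+1}v_0$.
\item From this, $\phi_j/\phi_0=a_jx_j^{\ell_j}$ with $a_j\neq0$, so $v_\beta(\phi_j/\phi_0)=-\ell_je_j$ (Lemmas~\ref{zero}, \ref{nonzero}, \ref{Section2} and Proposition~\ref{polynomials}).
\item By convexity the body contains $\mathrm{conv}(0,-\ell_1e_1,\dots,-\ell_Ne_N)$. After $t\mapsto -t$ this contains a simplex of size $\kappa=\min_j\ell_j$.
\end{enumerate}
This also removes your worry about nefness. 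The edge lengths are the $\ell_j$, not the $n_j$. Moreover $\ell_j=\tilde\pi^*\mathcal L\cdot\tilde C_j$ equals the degree of $\mathcal L$ on the isomorphic image $\widehat C_{k,p}$ (with $j=t_p(k)$), which is positive because $\mathcal L$ is very ample (Proposition~\ref{prop:+ve}). So no edge collapses even where $n_j=0$.

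\textbf{Identification of minimal curves.} Your identification is wrong as stated: the minimal curves are not ``the fibers of the last Schubert-variety fibration and their translates''. In Example~\ref{ex:Gromov width} the minimum $2$ is $\ell_{1,1}$, attained by $\widehat C_{1,1}$ in the \emph{first} block. What the matching needs is the following.
\begin{itemize}
\item The $T$-fixed point at your unspecified vertex must be the base point $\widehat x$, which corresponds to $\phi_0$ and lies in the open $B$-orbit.
\item Then every $\widehat C_{k,j}$ through $\widehat x$ is free (Lemma~\ref{lem:free}) and lies in a covering family. This gives $\kappa\ge\min_C\mathcal L\cdot C$, which is all the lower bound needs.
\item Conversely, every minimal family contains a $T$-stable curve through $\widehat x$ (cf.\ \cite{BK21}), which yields the explicit equality $\min_C\mathcal L\cdot C=\kappa$.
\end{itemize}
Until you specify this point and make this comparison, the equality $m'=m$ your argument hinges on is not established.
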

To describe rational K\"ahler $2$-forms on $\widehat{X}(\widehat{w})$, we first study line bundles on it. In \cite{LT}, Lauritzen and Thomsen provided an explicit basis for the Picard group of the classical Bott-Samelson variety $Z(\tilde w)$, along with criteria for a line bundle to be (very) ample. In Section~\ref{sec:line}, we extend these results to the smooth generalized Bott-Samelson varieties $\widehat{X}(\widehat{w})$: we construct a basis for their Picard groups (Theorem~\ref{thm:linebundles}) and establish criteria for a line bundle to be very ample or nef (Theorem~\ref{thm:very ample}). From this classification, we also notice that any ample line bundle over $\widehat{X}(\widehat{w})$ is very ample (Corollary \ref{cor:ample}).

Using the description of $T$-invariant curves on $\widehat{X}(\widehat{w})$ together with the characterization of free rational curves (see Lemma~\ref{lem:free}), we can explicitly determine the Gromov width of $(\widehat{X}(\widehat{w}),\omega)$ for a given rational K\"ahler $2$-form $\omega$ on $\widehat{X}(\widehat{w})$; see Example~\ref{ex:Gromov width}.

We conclude the introduction by giving upper bounds for the Seshadri constants of generalized Bott-Samelson varieties. Given a projective variety $X$ together with a nef line bundle $\mathcal{L}$, the Seshadri constant
$\varepsilon(X,\mathcal{L},x)$ of $\mathcal{L}$ at a point $x \in X$ is defined as the infimum of the ratio
$\mathcal{L}\cdot C / \operatorname{mult}_x C$, taken over all irreducible and reduced curves $C$ on $X$
passing through $x$. Here, $\operatorname{mult}_x C$ denotes the multiplicity of $C$ at $x$.

For a projective complex manifold 
$X$ equipped with a very ample line bundle, \cite[Proposition 6.2.1]{BC01} shows that the Seshadri constant is always bounded above by the Gromov width of $X$ equipped with the Fubini-Study form $\omega_{\mathcal L}$ associated
to $\mathcal L$. Therefore, Theorem~\ref{thm:main} yields upper bounds for the Seshadri constants of generalized Bott-Samelson varieties, as recorded in Corollary~\ref{Cor:Ses}.
\begin{corollary}
    Let $\widehat{X}(\widehat{w})$ be a smooth generalized Bott-Samelson variety equipped with an ample line bundle $\mathcal{L}$. Then for any $x \in \widehat{X}(\widehat{w})$,
    \[
        \varepsilon(\widehat{X}(\widehat{w}), \mathcal{L}, x) \leq 
        \min\left\{\, \mathcal{L} \cdot C \;\middle|\; 
        C \text{ is a minimal rational curve in } 
        \widehat{X}(\widehat{w})\, \right\}.
    \]
\end{corollary}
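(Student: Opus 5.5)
The corollary follows by combining Theorem~\ref{thm:main} with the comparison between Seshadri constants and Gromov width from \cite[Proposition 6.2.1]{BC01}. The plan has three steps: reduce to the very ample case, apply the comparison, then evaluate the Gromov width for the Fubini--Study form.

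\emph{Reduction to very ample.} Let $\mathcal L$ be ample on $\widehat{X}(\widehat{w})$. By Corollary~\ref{cor:ample}, every ample line bundle on $\widehat{X}(\widehat{w})$ is very ample. (If one prefers not to use this, note that both sides of the inequality scale linearly under $\mathcal L\mapsto \mathcal L^{\otimes m}$: we have $\varepsilon(X,\mathcal L^m,x)=m\,\varepsilon(X,\mathcal L,x)$ and $\mathcal L^m\cdot C=m\,\mathcal L\cdot C$. So one may pass to a very ample power and divide by $m$.)

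\emph{Comparison with Gromov width.} Use $\mathcal L$ to embed $\widehat{X}(\widehat{w})$ into $\mathbb P(H^0(\mathcal L)^*)$. Let $\omega_{\mathcal L}$ be the restriction of the Fubini--Study form, normalized so that $[\omega_{\mathcal L}]=c_1(\mathcal L)$. This is a Kähler form with integral, hence rational, cohomology class. By \cite[Proposition 6.2.1]{BC01}, for every $x$,
\[
\varepsilon(\widehat{X}(\widehat{w}),\mathcal L,x)\le w_G(\widehat{X}(\widehat{w}),\omega_{\mathcal L}).
\]

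\emph{Evaluating the right side.} Theorem~\ref{thm:main} applies to $\omega_{\mathcal L}$ and identifies $w_G$ with the minimum of $\int_C\omega_{\mathcal L}$ over minimal rational curves $C$. For any curve $C$ we have $\int_C\omega_{\mathcal L}=\langle c_1(\mathcal L),[C]\rangle=\mathcal L\cdot C$, which gives the stated bound.

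The only delicate point is normalization. The Gromov width convention in the introduction (a ball $B(\sqrt{a/\pi})$ counts as size $a$) must be compatible with the convention in \cite{BC01} relating $\omega_{\mathcal L}$ to $c_1(\mathcal L)$, so that a line in projective space has area $1$. I would check that with these conventions $w_G(\mathbb P^n,\omega_{FS})=1=\varepsilon(\mathbb P^n,\mathcal O(1))$. Otherwise no obstacle is expected, since the substance lies in Theorem~\ref{thm:main}.
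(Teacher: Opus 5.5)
Your proposal is correct and is essentially the paper's argument. The paper also combines \cite[Proposition~6.2.1]{BC01} with Theorem~\ref{thm:main}, stating the result in Corollary~\ref{Cor:Ses} for very ample $\mathcal{L}$ with the bound $\kappa$, which equals $\min\{\mathcal{L}\cdot C\}$ over minimal curves; the passage from ample to very ample is Corollary~\ref{cor:ample}, exactly as you use it. Your scaling alternative and the normalization check $w_G(\mathbb{P}^n,\omega_{FS})=1$ are harmless additions beyond what the paper needs.
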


\subsection{Structure of the paper}
The paper is organized as follows. In Section~\ref{sec:basics}, we review basic notions and definitions. In Section~\ref{GBSV}, we recall the construction of generalized Bott-Samelson varieties and describe their relation to Bott-Samelson varieties. In Section~\ref{sec:line}, we give a description of line bundles on generalized Bott-Samelson varieties and provide a characterization of very ample and nef line bundles. In Section~\ref{sec:UB}, we review the notion of minimal rational curves, describe free rational curves on generalized Bott-Samelson varieties, and obtain an upper bound for the Gromov width in terms of minimal rational curves. In Section~\ref{sec:LB}, we recall the notion of Newton-Okounkov bodies, derive a lower bound for the Gromov width, and prove Theorem~\ref{thm:main}. Section~\ref{sec:Seshadri con} discusses the connection with Seshadri constants. In Appendix~\ref{f_0}, we consider Newton-Okounkov bodies for nef line bundles on Bott-Samelson varieties and prove that the simplex of the required size can be embedded in them.


\section{Preliminaries}\label{sec:basics}

In this section, we briefly setup the notation and review relevant definitions. For the necessary background on linear algebraic groups and Schubert varieties, we refer to \cite{BK}, \cite{Hum2}, \cite{Bor}, and \cite{Spr}.

 Let $G$ be a simply-connected semisimple complex algebraic group of rank $n$. Fix a maximal torus $T \subset G$ and a Borel subgroup $B \subset G$ containing $T$.
Let $R$ denote the root system of $(G, T)$ and let $R^+ \subset R$ be the subset of positive roots corresponding to $(B, T)$. Then $R^+$ forms a system of positive roots of $R$. Let $R^- = -R^+$ be the corresponding set of negative roots, and let
\[
S = \{ \alpha_1, \ldots, \alpha_n \} \subset R^+
\]
be the set of simple roots. The Weyl group $W = N_G(T)/T$ is generated by the simple reflections $s_1, \ldots, s_n$ corresponding to the simple roots.

For any root $\beta \in R$, denote by $U_\beta \subset G$ the associated root subgroup and by $G_\beta \subset G$ the subgroup generated by $U_\beta$ and $U_{-\beta}$. For any simple root $\alpha\in S$, we denote by $P^{\alpha}$ and $P_{\alpha}$ the maximal and minimal parabolic subgroups of $G$, respectively.
We also consider the coroot system $R^\vee$, whose simple coroots $\alpha_1^\vee, \ldots, \alpha_n^\vee$ form a basis of the cocharacter lattice $\Xi_*(T)$. The dual basis of the character lattice $\Xi^*(T)$ consists of the fundamental weights $\omega_1, \ldots, \omega_n$. More intrinsically, for any simple root $\alpha$, we denote by $\omega_\alpha$ the fundamental weight defined by
\[
{\omega}_\alpha(\beta^\vee) =
\begin{cases}
1 & \text{if } \beta = \alpha, \\
0 & \text{if } \beta \neq \alpha,
\end{cases}
\]
for all simple roots $\beta$.

Let $X$ be a projective variety equipped with a transitive action of $G$. Choose a point $x \in X$ and assume that the stabilizer subgroup $G_x$ is smooth. Then $X$ can be identified with the homogeneous space $G/P$, where $P := G_x$ is a connected parabolic subgroup (see, for example, \cite[Chapter IV, Section 11]{Bor}. Under this identification, the points $x \in X$ and $eP \in G/P$ are both referred to as base points.

Given a subset $I \subset S$, let $P_I$ denote the standard parabolic subgroup of $G$ corresponding to $I$. Let $W_I$ be the subgroup of $W$ generated by the simple reflections $s_\alpha$ for $\alpha \in I$. Then
\[
W_I = N_{L_I}(T)/T,
\]
where $L_I$ is a Levi factor of $P_I$. Define $W^I$ as the subset of $W$ consisting of elements $w$ such that $w(\alpha) \in R^+$ for all $\alpha \in I$; this subset is often denoted $W^{P_I}$ in the literature. Then $W^I$ forms a set of minimal-length representatives of the cosets in $W/W_I$.

For any $w \in W$, the \textit{support} of $w$, denoted $\operatorname{Supp}(w)$, is the set of simple roots that appear in a reduced expression for $w$. Let $G_w$ denote the subgroup of $G$ generated by the root subgroups $U_{\pm \alpha}$ for $\alpha \in \operatorname{Supp}(w)$. Then $G_w$ is the derived subgroup of the Levi subgroup $L_{\operatorname{Supp}(w)}$ and is therefore semisimple, normalized by $T$ and contains a representative of $w$.

Let $P^w$ be the largest parabolic subgroup of $G$ such that $B \subset P^w$ and $w \in W^{P^w}$. Then
\[
P^w = P_{I^w}, \quad \text{where } I^w := \{ \alpha \in S \mid w(\alpha) \in R^+ \}.
\]
Equivalently, $P^w$ is generated by $B$ and $U_{-\alpha}$ with $\alpha\in I^{w}$. 
Consider the base point $x = eP^w \in G/P^w$ and the point $wx \in G/P^w$. The $B$-orbit $Bwx$ is a locally closed subvariety of $G/P^w$ and the associated \textit{Schubert variety} $X(w)$ is defined as its Zariski closure:
\[
X(w) := \overline{Bwx} \subset G/P^w.
\]

Let $P_w$ be the subgroup of $G$ stabilizing $X(w)$, i.e.,
\[
P_w := \{ g \in G \mid g X(w) = X(w) \}.
\]
Then $P_w$ is a parabolic subgroup containing $B$ and hence of the form $P_{I_w}$, where
\[
I_w := \{ \alpha \in S \mid s_\alpha w \leq w \}
\]
with respect to the Bruhat order on $W^{I^w} = W/W_{I^w}$. Finally, observe that $P_w \cap G_w$ is a parabolic subgroup of $G_w$ and we have the isomorphism
\[
X(w) = \overline{P_w w P^w}/P^w \simeq \overline{(P_w \cap G_w) w (P^w \cap G_w)}/(P^w \cap G_w) \subset G_w / (P^w \cap G_w).
\]

\section{Generalized Bott-Samelson Varieties}\label{GBSV}

In this section, we outline a generalization of the classical Bott-Samelson construction, based on suitable factorizations in the Weyl group. In this section, we follow \cite{Per07}.

We first recall the following action:
Let an algebraic group $H$ acts on varieties $X$ and $Y$. The \emph{twisted fiber product} $X \times^H Y$ is defined as the quotient of $X \times Y$ by the relation
\[
(x, y) \sim (xh, h^{-1}y), \quad \text{for all } h \in H.
\]
For a given $w\in W$, let us denote 
\[
\widehat{X}(w):=\overline{(P_w \cap G_w) w (P^w \cap G_w)}/(P^w \cap G_w).
\]
Note that $\widehat{X}(w)\simeq X(w)$. 

Let $w = w_1w_2 \in W$ with $P^{w_1} \cap G_{w_1} \subset P_{w_2}$. Define
\[
\widehat{X}(w_1,w_2) := \overline{(P_{w_1} \cap G_{w_1}) w_1 (P^{w_1} \cap G_{w_1})} \times^{P^{w_1} \cap G_{w_1}} \widehat{X}(w_2),
\]
where $\widehat{X}(w_2) \simeq X(w_2)$.
This variety is projective and admits a Zariski locally-trivial fibration
\[
f_{w_1,w_2} : \widehat{X}(w_1,w_2) \to \widehat{X}(w_1), \quad [p_1, p_2] \mapsto [p_1],
\]
with fiber $\widehat{X}(w_2)$ and is equivariant under $P_{w_1} \cap G_{w_1}$ acting on the first component.
There is also a morphism
\[
\pi_{w_1,w_2} : \widehat{X}(w_1,w_2) \to G/P^w, \quad [p_1, p_2] \mapsto p_1p_2P^w.
\]
This construction extends inductively under suitable conditions.

\begin{definition}\label{def:gBS}
Define:

\begin{enumerate} 
    \item 
A sequence $\widehat{w} = (w_1, \dots, w_m)$ in $W$ is a \emph{generalized decomposition} of $w \in W$ if $w = w_1 \cdots w_m$.

\item For any generalized decomposition $\widehat{w}$ of $w$, we inductively define a sequence of parabolic subgroups $(P_{J_i})_{1\leq i\leq m}$ as follows:
\[\quad {J_m}:=I_{w_m}\quad \text{and} \quad J_i:=( J_{{i+1}}\cup\operatorname{Supp}(w_i))\cap( w_{i}^{\perp} \cup \operatorname{Supp}(w_i))\cap (I_{w_i}\cup \operatorname{Supp}(w_i)^c),\]
where
\[\operatorname{Supp}(w_i)^c:=S\setminus \operatorname{Supp}(w_i)\quad \text{and} \quad
w_i^\perp := \{ \alpha \in S \mid w_i s_\alpha = s_\alpha w_i  \}.
\]

\item A generalized decomposition $\widehat{w}$ of $w$ is \emph{admissible} if, for all $1\leq i<m$,
\[
P^{w_i}\cap G_{w_i}\subset P_{J_{i+1}}, \quad\text{equivalently}\quad I^{w_i}\cap \operatorname{Supp}(w_i)\subset J_{i+1}.
\]

\item 
 A generalized decomposition $\widehat{w}$ of $w$ is \emph{good} if for all $1 \le i < m$, we have
\[
P^{w_i} \cap G_{w_i} \subset P_{w_{i+1}\cdots w_m}
\quad \text{and} \quad
 I_{w_i \cdots w_m} \subset w_i^\perp \cup \operatorname{Supp}(w_i).
\]
\end{enumerate}

\end{definition}

\begin{remark}\

\begin{enumerate}
\item The condition $P^{w_i} \cap G_{w_i} \subset P_{w_{i+1}\cdots w_m}$ given in  Definition \ref{def:gBS}(4), is equivalent to \[ I^{w_i}\cap \operatorname{Supp}(w_i)\subset I_{w_{i+1}\cdots w_m}. \]
    \item    
If $\widehat{w} = (w_1,\dots,w_m)$ is an admissible (resp. good) generalized decomposition of $w$, then each tail $(w_{i+1},\dots,w_m)$ is also an admissible (resp. good) generalized decomposition of $w_{i+1}\cdots w_m$.
\item  
In the minuscule case, the second condition 
\[
I_{w_i\cdots w_m}\subset w_i^{\perp}\cup \operatorname{Supp}(w_i)
\]
in the definition of good generalized decompositions ensures that the parabolic subgroups $P_{J_i}$ have the nice structures. In that case, precisely we have the following 
\[
P_{J_i}=P_{w_i\cdots w_m}.
\] As a consequence, in the minuscule case, every good generalized decomposition is admissible (see for more details \cite[Proposition 5.3]{Per07}).
\item In general, admissible and good are two different notions. See the following two examples:
\begin{enumerate}
    \item \emph{Admissible but not good}:
 Let $G=SL_4(\mathbb{C})$ and consider the generalized reduced decomposition \[\widehat{w}=(w_1,w_2,w_3)=(s_2,s_1,s_2)\] of $w=s_2s_1s_2\in W$. We have $J_3=\{\alpha_2\}$, $J_2=\{\alpha_1\}$, and $J_1=\{\alpha_2\}$. Since $\emptyset\subset J_i$ for all $1\le i\le 3$, the decomposition $\widehat{w}$ is admissible. However, $I_{w_1w_2w_3}=\{\alpha_1,\alpha_2\}$, whereas $w_1^{\perp}\cup \operatorname{Supp}(w_1)=\{\alpha_2\}$. Therefore, $I_{w_1w_2w_3}\not\subset w_1^{\perp}\cup \operatorname{Supp}(w_1)$, showing that $\widehat{w}$ is not a good generalized decomposition of $w$.

\item \emph{Good but not admissible}: Let $G=SL_4(\mathbb{C})$ and consider the generalized reduced decomposition 
\[
\widehat{w}=(w_1,w_2,w_3)=(s_3s_2s_1,s_3s_2,s_3)
\]
of the longest element $w=s_3s_2s_1s_3s_2s_3\in W$. We have \[J_2=\{\alpha_3\}\quad \text{and}\quad I^{w_1}\cap \operatorname{Supp}(w_1)=\{\alpha_2,\alpha_3\}.\]
Clearly, $I^{w_1}\cap \operatorname{Supp}(w_1)\not\subset J_2$, thus $\widehat{w}$ is not admissible. But $\widehat{w}$ is good, as we have
\[I_{w_1w_2w_3}=\{\alpha_1,\alpha_2,\alpha_3\}, \quad
I_{w_2w_3}=\{\alpha_2,\alpha_3\},\quad I_{w_3}=\{\alpha_3\}\]
and 
\[I^{w_1}\cap \operatorname{Supp}(w_1)=\{\alpha_2,\alpha_3\},\quad I^{w_2}\cap \operatorname{Supp}(w_2)=\{\alpha_3\}.
\]
Also, we have $w_1^{\perp}\cup\operatorname{Supp}(w_1)=\{\alpha_1,\alpha_2,\alpha_3\}$ and $w_2^{\perp}\cup \operatorname{Supp}(w_2)=\{\alpha_2,\alpha_3\}.$ All these together satisfy the conditions for Definition \ref{def:gBS}(4) as follows
\[
I^{w_1}\cap \operatorname{Supp}(w_1)\subset I_{w_2w_3},\quad I_{w_1w_2w_3}\subset w_1^{\perp}\cup \operatorname{Supp}(w_1)
\]and\[
I^{w_2}\cap \operatorname{Supp}(w_2)\subset I_{w_3},\quad I_{w_2w_3}\subset w_2^{\perp}\cup \operatorname{Supp}(w_2).
\]
Hence, $\widehat{w}$ is good but not admissible.
\end{enumerate}
    \end{enumerate}
\end{remark}

For any $1\leq i<m$, since
$
I_{w_{i+1}}\cap\operatorname{Supp}(w_{i+1})\subset J_{i+1},
$
the action of $P_{w_{i+1}}\cap G_{w_{i+1}}$ on $\widehat{X}(w_{i+1},\dots,w_m)$ extends to an action of $P_{J_{i+1}}$ on $\widehat{X}(w_{i+1},\dots,w_m)$; see \cite[Lemma~5.1]{Per07}. The condition $P^{w_i}\cap G_{w_i}\subset P_{J_{i+1}}$ in the definition of admissibility ensures that there exists a well-defined twisted fiber product
\[
\widehat{X}(w_i,\dots,w_m):=\overline{(P_{w_i}\cap G_{w_i})w_i(P^{w_i}\cap G_{w_i})}\times^{P^{w_i}\cap G_{w_i}} \widehat{X}(w_{i+1},\dots,w_{m}).
\]
Thus, we arrive at the following definition.

\begin{definition}
Let $w\in W$ and $\widehat{w} = (w_1, \dots, w_m)$ be an admissible generalized decomposition of $w$. The associated projective variety $\widehat{X}(\widehat{w})$ is defined recursively and admits a Zariski locally trivial fibration
\[
\widehat{f} : \widehat{X}(\widehat{w}) \to \widehat{X}(w_1), \quad [p_1,\dots,p_m] \mapsto [p_1],
\]
with fiber $\widehat{X}(w_2,\dots,w_m)$. This variety is called the \emph{generalized Bott--Samelson variety}.
\end{definition}

There is a $(P_{w_1} \cap G_{w_1})$-equivariant morphism
\begin{equation}\label{eq:pi map}
\widehat{\pi}: \widehat{X}(\widehat{w}) \to {X}(w), \quad [p_1,\dots,p_m] \mapsto p_1 \cdots p_m P^w.
\end{equation}

\begin{definition}
An admissible generalized decomposition $\widehat{w}$ is \emph{reduced} if $\ell(w) = \sum_{j=1}^m \ell(w_j)$.
\end{definition}

 Throughout this paper, we assume that $\widehat{w}$ is an admissible generalized reduced decomposition unless stated otherwise.

If $\widehat{w}$ is reduced, then $\widehat{\pi}$ is birational onto its image $X(w)$; it restricts to an isomorphism over the open $B$-orbit $Bwx$ and maps the base point $\widehat{x} = [w_1,\dots,w_m]$ to $wx$. 

The map $\widehat{f}$ sends $\widehat{x}$ to $w_1x_1$, where $x_1$ is the base point of $G_{w_1}/(P^{w_1} \cap G_{w_1})$. The $B$-orbit $B\widehat{x}$ maps isomorphically onto $Bwx$ under $\widehat{\pi}$.
More generally, for $1 \le j \le m$, define
\[
\widehat{f}_j : \widehat{X}(\widehat{w}) \to \widehat{X}(w_1,\dots,w_j), \quad [p_1,\dots,p_m] \mapsto [p_1,\dots,p_j],
\]
with fiber $\widehat{X}(w_{j+1},\dots,w_m)$ and $\widehat{f}_1 = \widehat{f}$. By \cite[Lemma~5.1]{Per07}, the parabolic subgroup $P_{J_1}$ acts on $\widehat{X}(\widehat{w})$ and recall that $P_{w_1}$ is the stabilizer of $X(w_1)$, with these actions we note that the morphism $\widehat{f}$ is $B$-equivariant. 

Also, note that  the stabilizer of the base point $\widehat{x}$ 
contains the maximal torus $T$.
\begin{remark}\label{rmk:smoothness}Since smoothness is a local property and the morphism
$
\widehat{f} : \widehat{X}(\widehat{w}) \to \widehat{X}(w_1)
$
is a locally trivial fibration with fiber $\widehat{X}(w_2, \dots, w_m)$,
it follows, by induction on $m$, that the generalized Bott-Samelson variety $\widehat{X}(\widehat{w})$ is smooth if and only if the Schubert variety $X(w_i)$ is smooth for every $i = 1, \dots, m$.
 \end{remark}


\subsection{Link with Bott-Samelson Varieties}\label{sec:linktoBS}

In this subsection, we first recall Bott-Samelson variety  associated with an expression of $w\in W$ in terms of simple reflections. Also, we notice that, for a given generalized Bott-Samelson variety $\widehat{X}(\widehat{w})$ with respect to an admissible generalized reduced decomposition, we get a Bott-Samelson variety as a desingularization of $\widehat{X}(\widehat{w})$.

Let $\tilde{w}=(s_{i_1},\dots, s_{i_r})$ be a sequence, corresponding to an expression of $w= s_{i_1}\cdots s_{i_r}\in W$ (need not be reduced) in terms of simple reflections. Then the associated \textit{Bott-Samelson variety} $Z(\tilde{w})$ is the quotient 
\begin{center}
    $Z(\tilde{w}):=\big(P_{\alpha_{i_1}}\times P_{\alpha_{i_2}}\times\cdots\times P_{\alpha_{i_r}}\big)/B^{r}$
\end{center}
where $B^r$ acts from right on $P_{\alpha_{i_1}}\times P_{\alpha_{i_2}}\times\cdots\times P_{\alpha_{i_r}}$ by \begin{center}
    $(p_1,\dots,p_r)\cdot(b_1,\dots,b_r):=(p_1b_1, b_1^{-1}p_2b_2,\dots,b_{r-1}^{-1}p_rb_r)$
\end{center}
for $p_j\in P_{\alpha_{i_j}}$ and $b_{j}\in B$ for all $1 \leq j \leq r$. Moreover, if the expression of $w$ is reduced, then the Bott-Samelson variety $Z(\tilde{w})$ is a desingularization of the Schubert variety $X(w)$ in $G/P^w$ associated to $w$. 

\begin{remark}
    We may also write $Z(\tilde{w})$ as 
    \begin{center} 
        $P_{\alpha_{i_1}}\times^B P_{\alpha_{i_2}}\times^B\cdots \times^B P_{\alpha_{i_r}}/B$.  
    \end{center}Thus, it is a twisted product of $\mathbb P^1$'s.
   \end{remark}

\begin{lemma}\label{lem:reduced decomposition}

Let $\widehat{w} = (w_1, \dots, w_m)$ be an admissible generalized reduced decomposition of $w$, and 
$w_j= \prod_{k=1}^{r_j} s_{k,j}$, where $r_j=\ell(w_j)$ for all $1\leq j\leq m$. Then the sequence
\[
\tilde{w} = (s_{1,1}, \dots, s_{r_1,1}, \dots, s_{1,m}, \dots, s_{r_m,m})
\]
is also an admissible generalized reduced decomposition of $w$, with
\begin{equation}\label{eq:reindex}
w = \prod_{j=1}^{m} \prod_{k=1}^{r_j} s_{k,j}.
\end{equation}
\end{lemma}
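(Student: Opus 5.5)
The plan is to check the two defining properties directly, reducedness and then admissibility. Neither needs anything beyond the definitions in Definition~\ref{def:gBS}. The point is that each term of $\tilde w$ is a single simple reflection, and for these the admissibility condition holds automatically.

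\emph{Reducedness.} By construction, $\tilde w$ is a generalized decomposition of $w$: multiplying its entries in order gives $\prod_{j=1}^m\prod_{k=1}^{r_j}s_{k,j}=\prod_{j=1}^m w_j=w$, which is \eqref{eq:reindex}. Each entry $s_{k,j}$ has length $1$, and $r_j=\ell(w_j)$. Since $\widehat w$ is reduced,
\[
\sum_{j=1}^m\sum_{k=1}^{r_j}\ell(s_{k,j})=\sum_{j=1}^m r_j=\sum_{j=1}^m\ell(w_j)=\ell(w).
\]
Hence $\tilde w$ is reduced. In particular, the concatenated word is a reduced expression of $w$, and each chosen expression of $w_j$ is reduced.

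\emph{Admissibility.} Index the entries of $\tilde w$ as $(u_1,\dots,u_r)$ with $r=\ell(w)$ and $u_i=s_{\alpha}$ for some simple root $\alpha=\alpha(i)$. For the recursively defined sets $J_i$ of Definition~\ref{def:gBS}(2), attached to $\tilde w$, we must show
\[
I^{u_i}\cap\operatorname{Supp}(u_i)\subset J_{i+1}\quad\text{for all }1\le i<r.
\]
Here $\operatorname{Supp}(s_\alpha)=\{\alpha\}$, and $I^{s_\alpha}=\{\beta\in S\mid s_\alpha(\beta)\in R^+\}$. Since $s_\alpha(\alpha)=-\alpha\in R^-$, we have $\alpha\notin I^{s_\alpha}$. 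Therefore $I^{u_i}\cap\operatorname{Supp}(u_i)=\emptyset$, which lies in every $J_{i+1}$.

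Equivalently, in group terms, $P^{s_\alpha}\cap G_{s_\alpha}$ is the Borel subgroup $B\cap G_\alpha$ of $G_\alpha\cong SL_2$. This is contained in $B\subset P_{J_{i+1}}$, so no knowledge of the $J_i$ is needed.

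The only point needing any care is making sure the recursion for the $J_i$ is well defined for an arbitrary sequence. It is, since it only uses $I_{u_i}$, $u_i^\perp$ and supports. Beyond that, the argument is a one-line verification, and I do not expect a genuine obstacle.
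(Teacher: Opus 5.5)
Your proposal is correct and takes the same route as the paper, which only says that the lemma follows readily from the definitions. You supply the omitted details: reducedness is the length count $\sum_j r_j=\sum_j\ell(w_j)=\ell(w)$, and admissibility of $\tilde w$ is automatic because $I^{s_\alpha}\cap\operatorname{Supp}(s_\alpha)=\emptyset$ (equivalently, $P^{s_\alpha}\cap G_{s_\alpha}=B\cap G_\alpha\subset B\subset P_{J_{i+1}}$), independent of the sets $J_{i+1}$.
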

\begin{proof}
   This follows readily from the definition of an admissible generalized reduced decomposition.
\end{proof}

For simplicity, we reindex the expression of $w$ in Equation \ref{eq:reindex} as
\begin{center}
    $w = \prod_{j=1}^{m} \prod_{k=1}^{r_j} s_{t_{j}(k)}$,
\end{center}
where $s_{t_{j}(k)}:=s_{k,j}$ with $t_j(k) = \sum_{i=1}^{j-1} \ell(w_i) + k $ for any $1\leq j \leq m$ and $ 1\leq k \leq r_j$.

\begin{corollary}\label{cor:birational}
    The Bott-Samelson variety $Z(\tilde{w})$ is birational to the generalized Bott-Samelson variety $\widehat{X}(\widehat{w})$. In particular, $\widehat{X}(\widehat{w})$ is birational to $X(w)$.
\end{corollary}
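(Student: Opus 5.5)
The plan is to construct an explicit birational morphism $\phi\colon Z(\tilde{w}) \to \widehat{X}(\widehat{w})$ and check that it is an isomorphism over a dense open subset, namely the open $B$-orbits. The second assertion will then follow by composing with $\widehat{\pi}$.

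\emph{Step 1: the building block.} For each $j$, the sequence $(s_{1,j},\dots,s_{r_j,j})$ is a reduced expression of $w_j$. The classical Bott--Samelson map
\[
\overline{P_{\alpha_{1,j}}} \times^B \cdots \times^B P_{\alpha_{r_j,j}}/B \longrightarrow \widehat{X}(w_j),
\]
given by multiplication, is birational. Here the simple reflections $s_{k,j}$ all lie in $\operatorname{Supp}(w_j)$, so each $P_{\alpha_{k,j}}$ may be replaced by $P_{\alpha_{k,j}}\cap G_{w_j}$. The target is therefore $\overline{(P_{w_j}\cap G_{w_j}) w_j (P^{w_j}\cap G_{w_j})}/(P^{w_j}\cap G_{w_j})$.

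\emph{Step 2: gluing the blocks.} Define
\[
\phi[p_{1,1},\dots,p_{r_m,m}] = \bigl[\,p_{1,1}\cdots p_{r_1,1},\; p_{1,2}\cdots p_{r_2,2},\;\dots\,\bigr].
\]
To see that this is well defined, note that the relation $(\dots,p b,\, b^{-1}p',\dots)$ between consecutive blocks, with $b\in B$, is compatible with the twisted product over $P^{w_j}\cap G_{w_j}$. This holds because $B\cap G_{w_j}\subset P^{w_j}\cap G_{w_j}$, and because the $B$-factor lying outside $G_{w_j}$ acts through $P_{J_{j+1}}\supset B$ on the tail, using \cite[Lemma~5.1]{Per07}. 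Making this precise is the main technical point. I would do it by induction on $m$, using the fibration $\widehat{f}$ on one side and the analogous fibration $Z(\tilde{w}) \to Z(s_{1,1},\dots,s_{r_1,1})$ on the other. This gives a morphism of locally trivial fibrations that covers the Step~1 map on the base and restricts to the inductively constructed map on the fibres.

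\emph{Step 3: birationality.} By Step 2, $\phi$ is a morphism of fibrations whose map on the base and map on the fibres are both birational, so $\phi$ is birational. Alternatively, both $Z(\tilde{w})$ and $\widehat{X}(\widehat{w})$ map birationally onto $X(w)$, since $\tilde{w}$ and $\widehat{w}$ are reduced. Indeed $\widehat{\pi}\circ\phi$ is the usual Bott--Samelson map, which is an isomorphism over $Bwx$. Hence $\phi$ restricts to an isomorphism between the preimages of $Bwx$, and these are dense open subsets of irreducible varieties. In particular, $\widehat{X}(\widehat{w})$ is birational to $X(w)$ through $\widehat{\pi}$.
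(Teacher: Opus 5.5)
Your proposal is correct and follows essentially the paper's route: the same multiplication morphism $Z(\tilde{w})\to\widehat{X}(\widehat{w})$, shown to be an isomorphism over the open $B$-orbit by comparing with $\widehat{\pi}$, using $\pi=\widehat{\pi}\circ\phi$. You are more careful than the paper about why the map is well defined: the $B$-factors lying outside $G_{w_j}$ are absorbed through the $P_{J_{j+1}}$-action, which admissibility guarantees, whereas the paper's proof takes well-definedness for granted.
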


\begin{proof}
There is a natural morphism $
\tilde{\pi}: Z(\tilde{w}) \to \widehat{X}(\widehat{w}),
$
that maps
\[
[p_{1,1}, \dots, p_{r_1,1}, \dots, p_{1,m}, \dots, p_{r_m,m}] 
\mapsto 
\left[\prod_{k=1}^{r_1} p_{k,1}, \dots, \prod_{k=1}^{r_m} p_{k,m}\right],
\] 
where $p_{k,j}\in P_{\alpha_{k,j}}$ for all $1\leq j \leq m $ and $1\leq k \leq r_j$.
The morphism $\tilde{\pi}$ sends the open $B$-stable neighbourhood $B\tilde{x}$ of the base point $\tilde{x} \in Z(\tilde{w})$ isomorphically onto the open $B$-stable neighbourhood $B\widehat{x}$ of the base point $\widehat{x} \in \widehat{X}(\widehat{w})$. Also, the open $B$-stable neighbourhood $B\widehat{x}$ maps isomorphically onto the $B$-stable open neighbourhood $Bwx$ of the base point $wx\in X(w)$ under $\widehat{\pi}$.

Now, by using Equation \ref{eq:pi map}, the composition morphism $\widehat{\pi} \circ \tilde{\pi}$ maps  $B\widehat{x}$ isomorphically onto $B\tilde{x}$. This yields the Bott-Samelson resolution
\begin{equation}\label{BS resoltion}
    \pi: Z(\tilde{w}) \to X(w).
\end{equation}
Therefore, $Z(\tilde{w})$ and $\widehat{X}(\widehat{w})$ are birational to each other.
\end{proof}

 
\section{Line bundles on \texorpdfstring{$\widehat{X}(\widehat{w})$}{X-hat(w-hat)}} \label{sec:line}

In this section, we first briefly recall the line bundles on the Bott-Samelson varieties and then extend the construction to the smooth generalized Bott-Samelson varieties.

\subsection{Line bundles on \texorpdfstring{$Z(\tilde{w})$}{Z(tilde w)}} \label{subsec:line}

Let $w \in W$ be expressed as $w = s_{i_1} \cdots s_{i_m}$. Then $\widehat{w} = (s_{i_1}, \dots, s_{i_m})$ is an admissible generalized decomposition. For $1 \leq j \leq m$, note that $P^{s_{i_j}} = P_{S \setminus \{\alpha_{i_j}\}}$ is a maximal parabolic subgroup with $G_{s_{i_j}} = G_{\alpha_{i_j}}$, and $P_{s_{i_j}} = P_{\{\alpha_{i_j}\} \cup s_{i_j}^\perp}$. Thus,
\[
P^{s_{i_j}} \cap G_{s_{i_j}} = B \cap G_{\alpha_{i_j}}
\]
is a Borel subgroup of $G_{\alpha_{i_j}}$. It follows that
\begin{align*}
\widehat{X}(s_{i_1}, \dots, s_{i_m}) &= G_{\alpha_{i_1}} \times^{B \cap G_{\alpha_{i_1}}} \widehat{X}(s_{i_2}, \dots, s_{i_m}) \\
&\simeq P_{\alpha_{i_1}} \times^B \widehat{X}(s_{i_2}, \dots, s_{i_m}) \\
&\simeq P_{\alpha_{i_1}} \times^B \cdots \times^B P_{\alpha_{i_m}}/B,
\end{align*}
which is a Bott-Samelson variety. Here, $\widehat{w} = \tilde{w}$ and $\widehat{X}(\widehat{w}) \simeq Z(\tilde{w})$.

For $1 \leq j \leq m$, let $Z({\tilde{w}}(j))$ be the Bott-Samelson variety associated to the subexpression $\tilde{w}(j) = (s_{i_1}, \dots, s_{i_j})$. Consider the morphisms
\[
\widehat{\pi}_j \colon Z({\tilde{w}}(j)) \to G/B, \quad [p_1, \dots, p_j] \mapsto p_1 \cdots p_j B,
\]
and recall the map $\widehat{f}_j$ from Section \ref{GBSV}. For $1 \leq j \leq m$, define the line bundle on $Z(\tilde{w})$ by
\[
\mathcal{L}_j := \widehat{f}_j^* \widehat{\pi}_j^* \mathcal{L}_{G/B}({\omega}_{i_j}),
\]
where $\mathcal{L}_{G/B}({\omega}_{i_j})$ is the line bundle on $G/B$ associated to ${\omega}_{i_j}$. For $\mathbf{n} = (n_1, \dots, n_m) \in \mathbb{Z}^m$, set
\[
\mathcal{L}_{\mathbf{i}, \mathbf{n}} := \mathcal{L}_1^{n_1} \otimes \cdots \otimes \mathcal{L}_m^{n_m}.
\]

\begin{theorem}{\cite[Theorem~3.1]{LT}}\label{thm:LT line bundles}
\begin{enumerate}\ 
    \item The isomorphism classes of $\mathcal{L}_1, \dots, \mathcal{L}_m$ form a basis of $\mathrm{Pic}(Z(\tilde{w}))$. In particular, the map $\mathbb{Z}^m \to \mathrm{Pic}(Z(\tilde{w}))$, $\mathbf{n} \mapsto \mathcal{L}_{\mathbf{i}, \mathbf{n}}$, is an isomorphism.
    \item The line bundle $\mathcal{L}_{\mathbf{i}, \mathbf{n}}$ is (very) ample if and only if $n_j > 0$ for all $j$.
\end{enumerate}
\end{theorem}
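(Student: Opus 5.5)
We prove this result of Lauritzen and Thomsen by induction on $m$, using the $\mathbb{P}^1$-bundle structure $\widehat{f}_{m-1}\colon Z(\tilde{w}) \to Z(\tilde{w}(m-1))$. This map is the fibre bundle $P_{\alpha_{i_1}}\times^B\cdots\times^B P_{\alpha_{i_{m-1}}}\times^B (P_{\alpha_{i_m}}/B)$, so it is the projectivization of a rank two vector bundle. The base case $m=1$ is $Z = P_{\alpha_{i_1}}/B\simeq\mathbb{P}^1$, with $\mathcal{L}_1$ the restriction of $\mathcal{L}_{G/B}(\omega_{i_1})$. Since $\langle\omega_{i_1},\alpha_{i_1}^\vee\rangle=1$, this restriction is $\mathcal{O}(1)$, and both statements follow.

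\textbf{Part (1).} For a projective bundle $\mathbb{P}(E)\to Y$ we have $\mathrm{Pic}(\mathbb{P}(E)) = \mathrm{Pic}(Y)\oplus\mathbb{Z}\,\xi$, where $\xi$ is any line bundle of degree $1$ on the fibres. The pullbacks of $\mathcal{L}_1,\dots,\mathcal{L}_{m-1}$ along $\widehat{f}_{m-1}$ are the corresponding $\mathcal{L}_j$ on $Z(\tilde{w})$, so by induction they span the first summand. It remains to check that $\mathcal{L}_m$ has degree $1$ on each fibre. Each fibre is a translate $g P_{\alpha_{i_m}}/B$ embedded in $G/B$ via $\widehat{\pi}_m$. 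Because $\mathcal{L}_{G/B}(\omega_{i_m})$ is $G$-linearized, its degree on this curve is $\langle \omega_{i_m},\alpha_{i_m}^\vee\rangle = 1$. Freeness of the resulting basis then follows from the direct sum decomposition.

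\textbf{Part (2), necessity.} For each $j$ consider the $T$-stable curve $C_j\subset Z(\tilde{w})$ given by $\{[e,\dots,e,p_j,e,\dots,e]\}$ with $p_j\in P_{\alpha_{i_j}}$. This curve is isomorphic to $\mathbb{P}^1$. We compute $\mathcal{L}_k\cdot C_j$:
\begin{itemize}
\item if $k<j$, the map $\widehat{f}_k$ contracts $C_j$, so the degree is $0$;
\item if $k=j$, the degree is $1$;
\item if $k>j$, the image of $C_j$ under $\widehat{\pi}_k\circ\widehat{f}_k$ is $P_{\alpha_{i_j}}/B$, so the degree is $\langle\omega_{i_k},\alpha_{i_j}^\vee\rangle$, which equals $1$ or $0$.
\end{itemize}
This intersection matrix is unitriangular but not diagonal, so the naive test "$n_j>0$" does not follow directly from positivity on the $C_j$. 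Instead we use a different family of curves $C'_j$: the fibres of the forgetful map $Z(\tilde{w}) \to Z(s_{i_1},\dots,\widehat{s_{i_j}},\dots,s_{i_m})$. These are realized as the curves $\{[e,\dots,p_j,p_j^{-1},\dots]\}$, suitably interpreted when $i_{j+1}=i_j$. Alternatively, one can replace the curves by a dual basis of the cone of curves, chosen so that $\mathcal{L}_k\cdot C'_j=\delta_{jk}$. Then ampleness forces $n_j=\mathcal{L}_{\mathbf{i},\mathbf{n}}\cdot C'_j>0$. Constructing these dual curves correctly is the main obstacle. What I would try first is a $T$-fixed point where all coordinates after position $j$ are chosen so that $\widehat{\pi}_k$ becomes constant along the curve for every $k>j$.

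\textbf{Part (2), sufficiency.} Each $\mathcal{L}_j$ is globally generated, being a pullback of the globally generated bundle $\mathcal{L}_{G/B}(\omega_{i_j})$. Consider the morphism
\[
\Phi=\prod_j \widehat{\pi}_j\circ\widehat{f}_j\colon Z(\tilde{w})\to (G/B)^m .
\]
Let $\mathcal{M}$ be the ample box product on $(G/B)^m$ built from fundamental weights. If $\Phi$ is a closed embedding, then $\Phi^*\mathcal{M}=\mathcal{L}_{\mathbf{i},\mathbf{1}}$ is very ample. Adding globally generated bundles preserves very ampleness, so every $\mathcal{L}_{\mathbf{i},\mathbf{n}}$ with all $n_j>0$ is very ample as well.

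To show $\Phi$ is an embedding, argue by induction on $m$ and check injectivity on the $\mathbb{P}^1$-fibres of $\widehat{f}_{m-1}$. Within a fibre, the point $p_1\cdots p_m B$ traces out a line of type $\alpha_{i_m}$. Its image in $G/P^{\alpha_{i_m}}$ is a point, but the map to $G/B$ followed by the projection to $G/P_{S\setminus\{\alpha_{i_m}\}}$ is injective on this line, because $\omega_{i_m}$ has degree $1$ on it. The same degree-$1$ fact, combined with the inductive hypothesis on the base, also gives injectivity on tangent vectors.
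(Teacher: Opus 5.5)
The paper gives no proof of this statement; it quotes it from \cite{LT}. So your argument has to stand on its own.

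Part (1) is fine. The $\mathbb{P}^1$-bundle $\widehat{f}_{m-1}$, together with $\deg \mathcal{L}_m|_{\mathrm{fibre}}=\langle\omega_{i_m},\alpha_{i_m}^\vee\rangle=1$, is exactly what is needed.

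Part (2) has a genuine gap in the necessity direction and a real error in the sufficiency direction.

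\textbf{Necessity.} You never construct curves $C'_j$ with $\mathcal{L}_k\cdot C'_j=\delta_{jk}$, and each of the candidates you name fails.
There is no forgetful morphism $Z(\tilde w)\to Z(s_{i_1},\dots,\widehat{s_{i_j}},\dots,s_{i_m})$ doing this job. For $\tilde w=(s_1,s_2)$ in $SL_3$ we have $Z(\tilde w)\cong\mathbb{F}_1$, the curve needed for $j=1$ is the $(-1)$-curve, and no morphism to $Z(s_2)\cong\mathbb{P}^1$ contracts it.
Putting $p_j^{-1}$ in slot $j+1$ is meaningless unless $i_{j+1}=i_j$, since then $p_j^{-1}\notin P_{\alpha_{i_{j+1}}}$.
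Taking ``a dual basis of the cone of curves'' is circular: that this cone is spanned by effective classes dual to the $\mathcal{L}_k$ is equivalent to the claim.

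The missing observation is that $\mathcal{L}_{G/B}(\omega_{i_k})$ descends to $G/P^{\alpha_{i_k}}$. So you only need $p_1\cdots p_k$ to be constant modulo $P^{\alpha_{i_k}}$, not modulo $B$; asking $\widehat{\pi}_k$ to be constant is too strong. Moreover $P_{\alpha_{i_j}}\subset P^{\alpha_{i_k}}$ whenever $i_k\neq i_j$. Let $k_1=\min\{k>j: i_k=i_j\}$ and set
\[
C'_j=\{[e,\dots,e,p,e,\dots,e,p^{-1},e,\dots,e] : p\in P_{\alpha_{i_j}}\},
\]
with $p$ in slot $j$ and $p^{-1}$ in slot $k_1$, the latter omitted if no such $k_1$ exists. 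Your $C_j$ coincides with $C'_j$ exactly when $i_j$ does not recur after position $j$. For $(s_1,s_2,s_1)$ and $j=1$, for example, $C'_1=\{[p,e,p^{-1}]\}$.

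Replacing $p$ by $pb$ with $b\in B$ does not move the point: push $b$ through the intermediate identity slots. Hence $C'_j\cong P_{\alpha_{i_j}}/B$. Along $C'_j$, the product $p_1\cdots p_k$ equals $e$ for $k<j$ and for $k\ge k_1$, and equals $p\in P^{\alpha_{i_k}}$ for $j<k<k_1$. Therefore $\mathcal{L}_k\cdot C'_j=\delta_{jk}$, and ampleness forces $n_j=\mathcal{L}_{\mathbf{i},\mathbf{n}}\cdot C'_j>0$. In Magyar's coordinates $C'_j$ is exactly the curve $Y_{\alpha_{i_j}}$ of \eqref{eq:curves}, and this duality is Proposition~\ref{prop:dual}.

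\textbf{Sufficiency.} Under your $\Phi$, the bundle on $(G/B)^m$ that pulls back to $\mathcal{L}_{\mathbf{i},\mathbf{1}}$ is $\boxtimes_j\mathcal{L}_{G/B}(\omega_{i_j})$. It is not ample when $\operatorname{rank}G\ge 2$, because it has degree $0$ on lines of type $\alpha\neq\alpha_{i_j}$ in the $j$-th factor. Genuinely ample box products can overshoot: for $(s_1,s_2)$ in $SL_3$, $\Phi^*(\mathcal{L}(\rho)\boxtimes\mathcal{L}(\rho))=\mathcal{L}_1^{2}\otimes\mathcal{L}_2$, and no ample box product pulls back to $\mathcal{L}_1\otimes\mathcal{L}_2$. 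So the (true) embedding into $(G/B)^m$ cannot give the sharp statement.

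The right target is $\prod_j G/P^{\alpha_{i_j}}$ with maximal parabolics, i.e.\ the map $\Phi_{\tilde w}$ of Proposition~\ref{line} and Remark~\ref{remark1}. There $\boxtimes_j\mathcal{O}(n_j\omega_{i_j})$ is very ample for all $n_j>0$ and pulls back to $\mathcal{L}_{\mathbf{i},\mathbf{n}}$.

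Your last paragraph is really an argument for this map. Note one slip: in the paper's notation a line of type $\alpha_{i_m}$ is contracted in $G/P_{\alpha_{i_m}}$, not in $G/P^{\alpha_{i_m}}$. The fibres of $\widehat{f}_{m-1}$ map isomorphically onto lines in $G/P^{\alpha_{i_m}}$. Induction then gives injectivity on points and on tangent vectors, and properness gives a closed embedding. With the target corrected, sufficiency follows directly, and the ``very ample $\otimes$ globally generated'' step is no longer needed.
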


\subsection{Line bundles on \texorpdfstring{$\widehat{X}(\widehat{w})$}{X-hat(w-hat)}}

We now construct the line bundles over smooth generalized Bott-Samelson variety $\widehat{X}(\widehat{w})$ and compute the Pic($\widehat{X}(\widehat{w})$). We also give a criterion for a line bundle to be very ample over $\widehat{X}(\widehat{w})$.
In this direction, we first prove the following:
\begin{proposition} \label{line}
We keep the notion as above. The variety $\widehat{X}(\widehat{w})$ is embedded as a closed subvariety inside $\prod_{j=1}^{m} G/P^{w_j}$. The embedding is given by  
\begin{align*} 
\Phi_{\widehat{w}}: \widehat{X}(\widehat{w}) &\to \prod_{j=1}^{m} G/P^{w_j} \\ 
[p_1,\dots,p_m] &\mapsto (p_1 P^{w_1}, p_1 p_2 P^{w_2}, \dots, (\prod_{j=1}^{m} p_j) P^{w_m}),
\end{align*} 
where $p_j \in \overline{(P_{w_j} \cap G_{w_j}) w_j (P^{w_j} \cap G_{w_j})} \subset G_{w_j}$ for all $1 \leq j \leq m$.
\end{proposition}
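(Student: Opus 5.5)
Write $H_j:=P^{w_j}\cap G_{w_j}$ and $Y_j:=\overline{(P_{w_j}\cap G_{w_j})w_jH_j}\subset G_{w_j}$, so that $\widehat{X}(\widehat{w})=Y_1\times^{H_1}Y_2\times^{H_2}\cdots\times^{H_{m-1}}Y_m/H_m$. The plan has four steps: check that $\Phi_{\widehat{w}}$ is well defined, note that its image is closed, prove injectivity by induction on $m$ using the fibration $\widehat{f}$, and upgrade to a closed immersion by checking the differential.

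\emph{Well-definedness.} The $j$-th coordinate is $p_1\cdots p_jP^{w_j}$. Replacing $(p_1,\dots,p_m)$ by $(p_1h_1,h_1^{-1}p_2h_2,\dots,h_{m-1}^{-1}p_mh_m)$ with $h_i\in H_i$ changes $p_1\cdots p_j$ into $p_1\cdots p_jh_j$. Since $h_j\in H_j\subset P^{w_j}$, the coset $p_1\cdots p_jP^{w_j}$ is unchanged. Strictly, the recursive definition uses the $P_{J_{i+1}}$-action on the tail, but the identification above into iterated twisted products is what that action is, so the formula is compatible.

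\emph{Closed image.} $\widehat{X}(\widehat{w})$ is projective and $\Phi_{\widehat{w}}$ is a morphism, so the image is closed.

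\emph{Injectivity, by induction on $m$.} For $m=1$, $\Phi$ is the inclusion $\widehat{X}(w_1)\hookrightarrow G_{w_1}/H_1\hookrightarrow G/P^{w_1}$. This is injective because $G_{w_1}\cap P^{w_1}=H_1$. Now suppose $\Phi_{\widehat{w}}(x)=\Phi_{\widehat{w}}(x')$ with $x=[p_1,\dots,p_m]$ and $x'=[p'_1,\dots,p'_m]$.
\begin{enumerate}
\item The first coordinates agree, so the $m=1$ case gives $p'_1=p_1h$ with $h\in H_1$.
\item Using the twisted-product relation, rewrite $x'=[p_1,hp'_2,p'_3,\dots]$. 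Here $hp'_2$ is interpreted through the $P_{J_2}$-action on $\widehat{X}(w_2,\dots,w_m)$, which exists because $H_1\subset P_{J_2}$ by admissibility.
\item So $x$ and $x'$ lie in the same fiber of $\widehat{f}$, namely $p_1\cdot \widehat{X}(w_2,\dots,w_m)$.
\item The remaining coordinates of $\Phi_{\widehat{w}}$ on this fiber are $p_1$ times $\Phi_{(w_2,\dots,w_m)}$. The key point is that $\Phi_{(w_2,\dots,w_m)}$ is $P_{J_2}$-equivariant, which holds because left multiplication on the first factor is compatible with the coordinate formula.
\item The inductive hypothesis then gives $x=x'$.
\end{enumerate}

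\emph{Closed immersion.} Injectivity alone gives a bijective morphism onto a closed subvariety. To get an embedding I will show that $d\Phi_{\widehat{w}}$ is injective at every point, which with injectivity makes $\Phi_{\widehat{w}}$ a closed immersion. Equivariance under $P_{J_1}$ and under the fiber actions reduces this to the same inductive structure. On the fiber direction, injectivity of the differential holds by induction after translating by $p_1$. On the base direction, the projection of $\Phi_{\widehat{w}}$ to the first factor $G/P^{w_1}$ is $\widehat{f}$ followed by the inclusion $\widehat{X}(w_1)\hookrightarrow G/P^{w_1}$. Its differential kills exactly the vertical tangent space of $\widehat{f}$. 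So a tangent vector killed by $d\Phi_{\widehat{w}}$ must be vertical, and it is then killed by the differential of the fiber embedding, hence is zero.

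\emph{Main obstacle.} I expect the difficult step to be making the twisted-product bookkeeping rigorous. Two points need care: the rewriting $p'_1=p_1h$ must be legitimate when $p_1$ lies in the closure $Y_1$ rather than in the group, and the $P_{J_2}$-action on the tail must be the one compatible with the formula for $\Phi$. For the first point I would use that $Y_1\to Y_1/H_1$ is an $H_1$-torsor, which follows from the fact that $G_{w_1}\to G_{w_1}/H_1$ is one. If injectivity of the differential proves awkward, a fallback is available for smooth $\widehat{X}(\widehat{w})$: a bijective morphism onto a normal variety is an isomorphism by Zariski's main theorem. That route requires separately establishing normality of the image, for instance via the local triviality in Remark~\ref{rmk:smoothness}.
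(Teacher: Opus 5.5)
Your proposal is correct. The injectivity step uses the same inductive, one-factor-at-a-time idea as the paper's proof: the first coordinate forces $p'_1=p_1h$ with $h\in P^{w_1}\cap G_{w_1}$, and $h$ is then absorbed into the tail. You differ from the paper in two useful ways.

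\emph{The twisted-product bookkeeping.} The paper routes this step through an auxiliary map $\psi_m$ into $\prod_{i<m}G/P^{w_i}\times G_{w_m}/(P^{w_m}\cap G_{w_m})$, and it treats $q_j=g_{j-1}^{-1}p_jg_j$ as a literal product. But $p_1\cdots p_m$ need not lie in $G_{w_m}$, and $g_{j-1}$ need not lie in $G_{w_j}$. So that manipulation only makes sense once $g_{j-1}$ is read through the $P_{J_j}$-action on the tail. This is exactly your setup: admissibility, plus equivariance of $\Phi_{(w_2,\dots,w_m)}$ under diagonal left multiplication on $\prod_{j\geq 2}G/P^{w_j}$.

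Your one-line reason for that equivariance is a little quick. Elements of $P_{J_2}$ outside $G_{w_2}$ do not act by multiplying $p_2$ inside $G_{w_2}$; they partly pass into the tail. These are the torus, the unipotent radical of $P_{\operatorname{Supp}(w_2)}$, and $U_{\pm\alpha}$ with $\alpha\in J_2\setminus\operatorname{Supp}(w_2)$. The equivariance nevertheless holds generator by generator, because such $\alpha$ are orthogonal to $\operatorname{Supp}(w_2)$ and lie in $I^{w_2}$. It is this equivariance, not a literal rewriting $h_1^{-1}p_2h_2$, that justifies your well-definedness step as well.

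\emph{The embedding claim.} The paper stops at ``injective morphism with closed image'', which by itself only gives a bijection onto a closed subvariety. Your differential argument, combined with properness, genuinely yields a closed immersion. In the base direction it uses the closed immersion $\widehat{X}(w_1)\subset G_{w_1}/(P^{w_1}\cap G_{w_1})\hookrightarrow G/P^{w_1}$, which is an isomorphism onto a closed $G_{w_1}$-orbit in characteristic zero. In the fibre direction it uses left translation by $p_1$ composed with $d\Phi_{(w_2,\dots,w_m)}$, in a local trivialization of $\widehat{f}$. So your route proves the statement as worded, and the Zariski main theorem fallback is not needed.
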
 

\begin{proof}
We proceed by induction on $m$. For $m=1$, it is clear that 
\[
\widehat{X}(\widehat{w}) \simeq X(w_1) \subset G/P^{w_1}.
\] 
We now consider the case $m=2$. We construct the embedding in two steps.

\textbf{Step~1.} Define a morphism 
\begin{align*}
\psi_2: \widehat{X}(\widehat{w}) &\to G/P^{w_1} \times G_{w_2} / (P^{w_2} \cap G_{w_2}) \\
[p_1,p_2] &\mapsto (p_1 P^{w_1}, p_1 p_2 (P^{w_2} \cap G_{w_2})).
\end{align*}

The morphism $\psi_2$ is well-defined: indeed, if $[p_1, p_2] = [q_1, q_2]$ then by the definition of $\widehat{X}(\widehat{w})$, there exist elements $h_i \in P^{w_i} \cap G_{w_i} \subset P^{w_i}$ for $i=1,2$ such that $q_1 = p_1 h_1$ and $q_2 = h_1^{-1} p_2 h_2$. Then it follows 
\[
(p_1 P^{w_1}, p_1 p_2 (P^{w_2} \cap G_{w_2})) = (q_1 P^{w_1}, q_1 q_2 (P^{w_2} \cap G_{w_2})).
\]
We now prove that the morphism $\psi_2$ is injective. If 
\begin{align*}
\psi_2([p_1,p_2]) &= \psi_2([q_1,q_2]) \\
(p_1 P^{w_1}, p_1 p_2 (P^{w_2} \cap G_{w_2})) &= (q_1 P^{w_1}, q_1 q_2 (P^{w_2} \cap G_{w_2})),
\end{align*}
where $p_j, q_j \in \overline{(P_{w_j} \cap G_{w_j}) w_j (P^{w_j} \cap G_{w_j})} \subset G_{w_j}$ for all $1 \leq j \leq 2$. Then, we have
\[
p_1^{-1} q_1 \in P^{w_1}\cap G_{w_1} \quad \text{and} \quad (p_1 p_2)^{-1} (q_1 q_2) \in P^{w_2} \cap G_{w_2}.
\] 
Therefore, there exist $g_1 \in P^{w_1} \cap G_{w_1}$ and $g_2 \in P^{w_2} \cap G_{w_2}$ such that \[p_1^{-1} q_1 = g_1\quad \text{and} \quad (p_1 p_2)^{-1} (q_1 q_2) = g_2.\] Since $q_1 = p_1 g_1$ and $q_2 = g_1^{-1} p_2 g_2$, we conclude that $[p_1, p_2] = [q_1, q_2].$

\textbf{Step~2.} There is also a natural injective morphism:
\begin{align*}
\eta_2: G/P^{w_1} \times G_{w_2} / (P^{w_2} \cap G_{w_2}) &\to G/P^{w_1} \times G/P^{w_2} \\
(g P^{w_1}, h (P^{w_2} \cap G_{w_2})) &\mapsto (g P^{w_1}, h P^{w_2}).
\end{align*}

Now set $\phi_{\widehat{w}} := \eta_2 \circ \psi_2$, which is the desired morphism. Moreover, since the image of a projective variety under a regular morphism is closed, thus the assertion follows for the case $m=2$.

Assume that the statement holds for all $2 \leq m-1$ and we will prove it for $m$. Define 
\begin{align*}
\psi_m: \widehat{X}(\widehat{w}) &\to \prod_{i=1}^{m-1} G/P^{w_i} \times G_{w_m} / (P^{w_m} \cap G_{w_m}) \\
[p_1, \dots, p_m] &\mapsto (p_1 P^{w_1}, \dots, (\prod_{i=1}^{m-1} p_i) P^{w_{m-1}}, (\prod_{i=1}^{m} p_i)(P^{w_m} \cap G_{w_m})),
\end{align*}
where $p_j \in \overline{(P_{w_j} \cap G_{w_j}) w_j (P^{w_j} \cap G_{w_j})} \subset G_{w_j}$ for all $1 \leq j \leq m$. One can easily check that $\psi_m$ is a well-defined morphism by similar arguments as in the case $m=2$. 
We now claim that the morphism $\psi_m$ is injective. Assume that 
\[
\psi_m([p_1, \dots, p_m]) = \psi_m([q_1, \dots, q_m]),
\]
then we get 
\[
(\prod_{i=1}^{l} p_i) P^{w_i} = (\prod_{i=1}^{l} q_i) P^{w_i} \quad \text{for } 1 \leq l \leq m-1,
\]
and 
\[
(\prod_{i=1}^{m} p_i) (P^{w_m} \cap G_{w_m}) = (\prod_{i=1}^{m} q_i) (P^{w_m} \cap G_{w_m}).
\]
This implies 
\[
(\prod_{i=1}^{m} p_i)^{-1} (\prod_{i=1}^{m} q_i) \in P^{w_m} \cap G_{w_m}.
\] 
Let $g_m := (\prod_{i=1}^{m} p_i)^{-1} (\prod_{i=1}^{m} q_i)$. By induction, there exist $g_j \in P^{w_j} \cap G_{w_j}$ such that 
\[
g_j = (\prod_{i=1}^{j} p_i)^{-1} (\prod_{i=1}^{j} q_i) \quad \text{for } 1 \leq j \leq m-1.
\]
Then we have 
\[
q_j = g_{j-1}^{-1} p_j g_j \quad \text{for } 2 \leq j \leq m, \quad \text{and} \quad q_1 = p_1 g_1.
\]
This finally leads us to $[p_1, \dots, p_m] = [q_1, \dots, q_m].$ 
This proves the claim. Hence by using the natural injective morphism
\[
\eta_m:\prod_{i=1}^{m-1} G/P^{w_i} \times G_{w_m} / (P^{w_m} \cap G_{w_m}) \hookrightarrow \prod_{i=1}^{m} G/P^{w_i},
\] the composition morphism $\Phi_{\widehat{w}} := \eta_m \circ \psi_m$ gives the desired result.
\end{proof}
\begin{remark}
    In Step $1$ of the proof of Proposition \ref{line}, in general it does not work if we take the co-domain of the morphsim $\psi_{2}$ to be \[
    Z:=G_{w_1}/(P^{w_1}\cap G_{w_1})\times G_{w_2}/(P^{w_2}\cap G_{w_2}).
    \]As in $SL_{3}(\mathbb{C})$, let $\widehat{w}=(s_1,s_2)$ be the admissible generalized reduced decomposition of $w=s_1s_2\in W$. Note that for a simple root $\alpha$, $G_{s_{\alpha}}=L_{s_{\alpha}}$, where $L_{s_{\alpha}}$ is the Levi factor of the parabolic subgroup $P_{s_{\alpha}}$ of $SL_{3}(\mathbb{C})$. Moreover, $P^{s_{\alpha}}\cap G_{s_{\alpha}}$ is a Borel subgroup of $L_{s_{\alpha}}$. Then \[
    Z\simeq \mathbb{P}^1\times \mathbb{P}^1.
    \] But by Subsection \ref{subsec:line},  the domain of $\psi_2$ is isomorphic to \[
   Z(s_1s_2) =P_{\alpha_1}\times^B P_{\alpha_2}/B.
    \] These together imply $\psi_2$ is an isomorphism. That means $Z(s_1s_2)\simeq \mathbb{P}^1\times \mathbb{P}^1$ which is a contradiction.
\end{remark}

To describe line bundles on $\widehat{X}(\widehat{w})$, we use an equivalent definition for the Bott-Samelson variety $Z({\tilde{w}})$ associated with a reduced expression of $w\in W$ constructed by Magyar in \cite{Mag98}.

Let $\tilde{w} = (s_{i_1}, \dots, s_{i_m})$ be an admissible generalized reduced expression of $w=s_{i_1} \cdots s_{i_m}\in W$ and let $\pi_{j}: G/B \to G/P_{\alpha_{i_{j}}}$ be the projection morphism whose fibers are isomorphic to $P_{\alpha_{i_j}}/B$. Note that $P_{\alpha_{i_j}}$ is the minimal parabolic subgroup, so $P_{\alpha_{i_j}}/B \cong \mathbb{P}^1$. Let $\mathbb{P}(x,\alpha_{i_j})$ denotes the projective line $\pi^{-1}_{j}(\pi_{j}(x))$ for $x\in G/B$.

For any $x\in G/B$ the map $G/B\to G/P^{\alpha_{i_j}}$ restricted to $\mathbb{P}(x,\alpha_{i_j})$ is an isomorphism onto its image which is called $\overline{\mathbb{P}}(x,\alpha_{i_j})$. If $y\in G/B$ and $\overline{x}\in \overline{\mathbb{P}}(y,\alpha_{i_{j}})$ for $x\in \mathbb{P}(y, \alpha_{i_{j}})$ we abuse notation by writing $\overline{\mathbb{P}}(\overline{x}, \alpha_{i_{j+1}})=\overline{\mathbb{P}}({x}, \alpha_{i_{j+1}})$. Under the morphism $\Phi_{\tilde{w}}$ defined in Proposition \ref{line}, the variety $Z(\tilde{w})$ is isomorphic to \[
 \tilde{X}(\tilde{w}):= \Big\{\, (x_1,\dots,x_m) \in \prod_{j=1}^{m} G/P^{s_{i_j}} \;\Big|\; 
x_0 = 1 \ \text{and} \ x_j \in \overline{\mathbb{P}}(x_{j-1}, \alpha_{i_j}) \ \text{for all } 1 \leq j \leq m \,\Big\}
\]
 see, \cite[Theorem~1]{Mag98} and \cite[Section~2]{Per07}. In this setting, for $1\leq j \leq m$, we have natural projections $pr_j: \tilde{X}(\tilde{w})\to G/P^{s_{i_j}}$ and we define the line bundles \[\mathcal{M}_j:= pr_j^*(\mathcal{O}_{G/P^{s_{i_j}}}(\omega_{\alpha_{i_j}}))\] on $\tilde{X}(\tilde{w})$.
 Recall the maps $\widehat{f}_j$ and $\widehat{\pi}_j$ described in Section \ref{GBSV} and Section \ref{sec:line}, respectively. Let $\pi^j:G/B\to G/P^{s_{i_j}}$ be a natural morphism.
 
 \begin{remark}\label{remark1}
Consider the following commutative diagram
 \begin{center}
\begin{tikzpicture}[scale=1.5, >=Stealth, every node/.style={font=\small}]

  \node (P1) at (-1,6) {$Z(\tilde{w})$};
  \node (P2) at (-1,4.5) {$Z(\tilde{w}(j))$};
  \node (P3) at (2.5,4.5) {$G/B$};
  \node (P4) at (2.5,6) {$\tilde{X}(\tilde{w})\subset \prod_{k=1}^{m}G/P^{s_{i_k}}$};
  \node (P5) at (4,4.5) {$G/P^{s_{i_j}}$};

  \draw[->] (P1) -- node[left] {$\widehat{f}_j$} (P2);
  \draw[->] (P2) -- node[below] {$\widehat{\pi}_j$}(P3);
  \draw[->] (P1) -- node[above] {$\Phi_{\tilde{w}}$}(P4);
  \draw[->] (P4) -- node[right=8pt] {$pr_j$}(P5);
  \draw[->] (P3) -- node[below] {$\pi^j$}(P5);
\end{tikzpicture}
\end{center}

 Then, we have 
 \begin{center}
$pr_j \circ \Phi_{\tilde{w}}= \pi^j\circ\widehat{\pi}_{j}\circ \widehat{f}_j $
 \end{center}
 for all $1\leq j\leq m$. Therefore, the pull-back of line bundles $ \mathcal{M}_j$ under $\Phi_{\tilde{w}}$ to $Z({\tilde{w}})$ are same as line bundles $\mathcal{L}_j$ defined in the beginning of Section \ref{sec:line}, that is,\[
\mathcal{L}_j = \Phi_{\tilde{w}}^*\big( \mathcal{M}_j \big)
\quad \text{for all } 1 \leq j \leq m.
\]
 \end{remark}
 
\begin{proposition}

The set $ \big\{ \mathcal{M}_j \mid 1 \leq j \leq m \big\} $ forms a basis for $ \operatorname{Pic}(\tilde{X}(\tilde{w})) $.
\end{proposition}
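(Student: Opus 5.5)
The plan is to transport the Lauritzen--Thomsen basis through the isomorphism $\Phi_{\tilde{w}}\colon Z(\tilde{w})\to \tilde{X}(\tilde{w})$. By Proposition~\ref{line}, applied to the decomposition $\tilde{w}=(s_{i_1},\dots,s_{i_m})$ (which is admissible and reduced by Lemma~\ref{lem:reduced decomposition}), $\Phi_{\tilde{w}}$ is an injective morphism onto a closed subvariety of $\prod_{j}G/P^{s_{i_j}}$. By Magyar \cite[Theorem~1]{Mag98}, its image is exactly $\tilde{X}(\tilde{w})$ and $\Phi_{\tilde{w}}$ is an isomorphism onto it. Hence the pullback
\[
\Phi_{\tilde{w}}^*\colon \operatorname{Pic}(\tilde{X}(\tilde{w}))\to \operatorname{Pic}(Z(\tilde{w}))
\]
is a group isomorphism.

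First I will check the isomorphism claim carefully, since Proposition~\ref{line} only gives a bijective morphism onto a closed image. I would rely on Magyar's theorem directly. Alternatively, both varieties are smooth: $\tilde{X}(\tilde{w})$ is an iterated $\mathbb{P}^1$-bundle by its defining incidence conditions, and $Z(\tilde w)$ is one by construction. A bijective morphism between smooth (in particular normal) varieties in characteristic zero is an isomorphism by Zariski's main theorem. This is the step I expect to be the only real obstacle, namely making sure that the set-theoretic image coincides with the incidence variety $\tilde{X}(\tilde{w})$. For that I would use that $x_j\in\overline{\mathbb{P}}(x_{j-1},\alpha_{i_j})$ is equivalent to $x_j=x_{j-1}p_jP^{s_{i_j}}$ for some $p_j\in P_{\alpha_{i_j}}$, proceeding by induction on $j$.

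Next, Remark~\ref{remark1} gives $\Phi_{\tilde{w}}^*(\mathcal{M}_j)=\mathcal{L}_j$ for every $1\le j\le m$. By Theorem~\ref{thm:LT line bundles}(1), the classes $\mathcal{L}_1,\dots,\mathcal{L}_m$ form a $\mathbb{Z}$-basis of $\operatorname{Pic}(Z(\tilde{w}))$. Since $\Phi_{\tilde{w}}^*$ is an isomorphism of abelian groups sending $\mathcal{M}_j\mapsto\mathcal{L}_j$, the inverse isomorphism sends a basis to a basis. Therefore $\{\mathcal{M}_j\}_{1\le j\le m}$ is a basis of $\operatorname{Pic}(\tilde{X}(\tilde{w}))$. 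Concretely, the map $\mathbb{Z}^m\to\operatorname{Pic}(\tilde{X}(\tilde{w}))$, $\mathbf{n}\mapsto\bigotimes_j\mathcal{M}_j^{n_j}$, is the composite of the Lauritzen--Thomsen isomorphism with $(\Phi_{\tilde{w}}^*)^{-1}$.
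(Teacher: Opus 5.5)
Your proposal is correct and follows the paper's route. Both arguments pull back along the isomorphism $\Phi_{\tilde w}\colon Z(\tilde w)\to\tilde X(\tilde w)$, use Remark~\ref{remark1} ($\Phi_{\tilde w}^*\mathcal M_j=\mathcal L_j$), and conclude with Theorem~\ref{thm:LT line bundles}(1). Your extra verification of the isomorphism (identifying the image by induction on $j$, then Magyar or Zariski's main theorem) only makes explicit what the paper takes from \cite[Theorem~1]{Mag98} and \cite[Section~2]{Per07}.
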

\begin{proof}
    Since $Z(\tilde{w})\simeq \tilde{X}(\tilde{w})$ and by Theorem \ref{thm:LT line bundles}(1), the set $ \big\{ \mathcal{L}_j \mid 1 \leq j \leq m \big\} $ forms a basis for $Z(\tilde{w})$. Thus, by Remark \ref{remark1} we get the result.
\end{proof}
We now describe a basis for the Chow group $A_1(\tilde{X}(\tilde{w}))$ which is dual to the basis $ \big\{ \mathcal{M}_j \mid 1 \leq j \leq m \big\} $ for $ \operatorname{Pic}(\tilde{X}(\tilde{w}))$.
For this we introduce $B$-stable (under the diagonal action) curves $Y_{\alpha_{i_j}}$ inside $\tilde{X}(\tilde{w})$, for $1\leq j \leq m$, defined as \begin{equation}\label{eq:curves}
Y_{\alpha_{i_j}} := \Big\{\, (x_1, \dots, x_m) \in \prod_{k=1}^{m} G/P^{s_{\alpha_k}} \;\Big|\; 
x_k = 1 \text{ for } k \neq j,\ \text{and } x_j \in \overline{\mathbb{P}}(1, \alpha_{i_j}) \,\Big\},
\end{equation}
so that $Y_{\alpha_{i_j}}$ is isomorphic to $\overline{\mathbb{P}}(1, \alpha_{i_j})$, see \cite[Lemma 2.12]{Per07}. We also note that
\[Y_{\alpha_{i_j}} \xrightarrow[pr_{j}]{\simeq} \overline{\mathbb{P}}(1,\alpha_{i_j})= P_{\alpha_{{i_j}}}/P^{s_{i_j}}= X(s_{i_j})\subset G/P^{s_{i_j}}\]
and for $k\neq j$
\[
Y_{\alpha_{i_j}} \xrightarrow[pr_{k}]{} eP^{s_{{i_k}}}/P^{s_{i_k}}.
\]
Thus, by using projection formula, we have 
\begin{align*}
     \mathcal{M}_j\cdot[Y_{\alpha_{i_k}}]&= {pr_j}_{*}\big(\mathcal{M}_j\cdot[Y_{\alpha_{i_k}}]\big) \\ &=\mathcal{O}_{G/P^{s_{i_{j}}}}(\omega_{\alpha_{i_j}})\cdot {pr_j}_{*}[Y_{\alpha_{i_k}}] \\ &=\begin{cases}
     \langle \omega_{\alpha_{i_j}}, \alpha_{i_k}^\vee \rangle= 1 &\text{if}~ j=k\\0 & \text{otherwise}.\end{cases}\\
\end{align*}
This together with \cite[Proposition 2.14]{Per07} gives the following:

\begin{proposition}\label{prop:dual}
The families $ \big\{ \mathcal{M}_j \mid 1 \leq j \leq m \big\} $ and $ \big\{ [Y_{\alpha_{i_j}}] \mid 1 \leq j \leq m \big\} $ are dual to each other. Moreover, the family $ \big\{ [Y_{\alpha_{i_j}}] \mid 1 \leq j \leq m \big\} $ form a basis of $A_1(\tilde{X}(\tilde{w}))$.
\end{proposition}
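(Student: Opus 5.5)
The duality is already contained in the intersection computation just before the statement. For all $1\le j,k\le m$ it gives $\mathcal{M}_j\cdot[Y_{\alpha_{i_k}}]=\delta_{jk}$. I will first re-check the ingredients of that computation, and then use the preceding proposition, that $\{\mathcal{M}_j\}$ is a basis of $\operatorname{Pic}(\tilde{X}(\tilde{w}))$, to conclude that the classes $[Y_{\alpha_{i_j}}]$ form a basis of $A_1(\tilde{X}(\tilde{w}))$.

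\textbf{Step 1: the pairing.} For $k\neq j$, every point of $Y_{\alpha_{i_k}}$ has $j$-th coordinate equal to the base point. So $pr_j$ maps $Y_{\alpha_{i_k}}$ to a point, ${pr_j}_*[Y_{\alpha_{i_k}}]=0$, and the projection formula gives $\mathcal{M}_j\cdot[Y_{\alpha_{i_k}}]=0$. For $k=j$, the map $pr_j$ sends $Y_{\alpha_{i_j}}$ isomorphically onto the Schubert curve $X(s_{i_j})=P_{\alpha_{i_j}}/P^{s_{i_j}}\subset G/P^{s_{i_j}}$, with degree one. The degree of $\mathcal{O}(\omega_{\alpha_{i_j}})$ on this $T$-stable line is $\langle\omega_{\alpha_{i_j}},\alpha_{i_j}^\vee\rangle=1$, by the standard restriction of $\mathcal{L}(\lambda)$ to $P_\alpha/B\simeq\mathbb{P}^1$, which has degree $\langle\lambda,\alpha^\vee\rangle$. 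One point must be checked: that $Y_{\alpha_{i_j}}$ really lies in $\tilde{X}(\tilde{w})$. This requires $1\in\overline{\mathbb{P}}(x_j,\alpha_{i_{j+1}})$ for $x_j\in\overline{\mathbb{P}}(1,\alpha_{i_j})$, and that is not automatic from the naive coordinate description. Here I would invoke \cite[Lemma 2.12]{Per07}, or equivalently realize $Y_{\alpha_{i_j}}$ inside $Z(\tilde{w})$ as the image of $[1,\dots,1,p,1,\dots,1]$ with $p\in P_{\alpha_{i_j}}$ and push it forward by $\Phi_{\tilde{w}}$. I regard this identification as the only delicate point, and I will take it from \cite{Per07}.

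\textbf{Step 2: the basis.} By \cite[Proposition 2.14]{Per07}, or because $Z(\tilde{w})$ is an iterated $\mathbb{P}^1$-bundle with an affine paving by $B$-orbits, $A_1(\tilde{X}(\tilde{w}))$ is free of rank $m$ and the intersection pairing $\operatorname{Pic}\times A_1\to\mathbb{Z}$ is perfect. Given this, the argument is linear algebra. Suppose $\sum_k c_k[Y_{\alpha_{i_k}}]=0$; pairing with $\mathcal{M}_j$ gives $c_j=0$, so the classes are linearly independent. For spanning, take any class $\gamma$ and set $\gamma'=\gamma-\sum_k(\mathcal{M}_k\cdot\gamma)[Y_{\alpha_{i_k}}]$. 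Then $\gamma'$ pairs to zero with every element of $\operatorname{Pic}$. Perfectness of the pairing, or the description of $A_1$ as generated by the $B$-stable one-dimensional Schubert cells, forces $\gamma'=0$. Hence $\{[Y_{\alpha_{i_j}}]\}$ is a basis dual to $\{\mathcal{M}_j\}$.
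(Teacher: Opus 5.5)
Your main line is the paper's argument. The pairing $\mathcal{M}_j\cdot[Y_{\alpha_{i_k}}]=\delta_{jk}$ comes from the projection formula along $pr_j$, and the basis statement rests on \cite[Proposition 2.14]{Per07}. Your linear-algebra step (free of rank $m$ plus a dual family forces a basis) just spells out what the paper leaves to that citation.

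However, the alternative you give in Step 1 for showing $Y_{\alpha_{i_j}}\subset\tilde{X}(\tilde{w})$ is wrong in general. Since $\Phi_{\tilde{w}}([p_1,\dots,p_m])=(p_1P^{s_{i_1}},\dots,p_1\cdots p_mP^{s_{i_m}})$, the curve $\{[1,\dots,1,p,1,\dots,1]\}$ maps to points whose $k$-th coordinate is $pP^{s_{i_k}}$ for every $k\ge j$. Because $P_{\alpha_{i_j}}\cap P^{s_{i_j}}=B$, for $p\notin B$ this coordinate is the base point if and only if $\alpha_{i_k}\neq\alpha_{i_j}$.

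So if the letter $s_{i_j}$ occurs again later in $\tilde{w}$, that curve is not $Y_{\alpha_{i_j}}$. It has $\mathcal{M}_k$-degree $1$ at every later occurrence $k$, and using it would produce a unitriangular pairing matrix rather than the identity. For instance, with $\tilde{w}=(s_1,s_2,s_1)$ you get $\Phi_{\tilde{w}}([p,1,1])=(pP^{s_1},P^{s_2},pP^{s_1})$.

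The correct lift is $\{[1,\dots,1,p,1,\dots,1,p^{-1},1,\dots,1]\}$, with $p^{-1}$ placed in the slot of the next occurrence of $s_{i_j}$ (and no $p^{-1}$ if there is none). This is well defined on $P_{\alpha_{i_j}}/B$, and its image under $\Phi_{\tilde{w}}$ is exactly $Y_{\alpha_{i_j}}$. When a later occurrence exists, this curve is contracted by $\pi$, which is consistent with the paper's restriction to last occurrences $m(j)$ in Lemma~\ref{contracted}.

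Since you ultimately rely on \cite[Lemma 2.12]{Per07}, your proof stands, but the ``equivalently'' should be dropped. A minor point: the affine paving of $Z(\tilde{w})$ is by $B$-stable cells, and these need not be single $B$-orbits. This does not affect your Step 2.
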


Since $\Phi_{\tilde{w}}$ is a $B$-equivariant isomorphism between $Z(\tilde{w})$ and $\tilde{X}(\tilde{w})$, so the pull-back $\Phi_{\tilde w}^*\big(Y_{\alpha_{i_j}} \big)$ of any curve $Y_{\alpha_{i_j}}$ is again a $B$-stable curve inside $Z(\tilde{w})$.

In general, the push-forward of a curve need not be again a curve. But in our case, we construct an indexing set depends on an expression of $w\in W$ such that the push-forward under the $B$-equivariant morphism $\pi$ (see Equation \ref{BS resoltion}) of curves corresponding to the indexing set are again curves.

Let $w=s_{i_1}\cdots s_{i_m} $ be a reduced (admissible generalized) decomposition of $w\in W$, let  
\begin{equation}\label{eq:beta} \beta_1:=\alpha_{i_1},\beta_2:= s_{i_1}(\alpha_{i_2}), \dots, \beta_m:= s_{i_1}\cdots s_{i_{m-1}}(\alpha_{i_{m}}).\end{equation}

Note that
\[
R^+ \cap w(R^-)= \{\beta_1, \beta_2, \dots, \beta_m\}.
\]
Recall that $R^+$ and $R^-$ denote the sets of positive and negative roots corresponding to $B$, respectively.

The following lemma is useful in the computations of the subsequent results.

\begin{lemma}\label{lem: Supp(w)}
    For any $w\in W$, we have $ S\setminus I^{w}\subseteq \operatorname{Supp}(w)$.
\end{lemma}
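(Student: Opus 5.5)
I would argue by contraposition, directly from the definition $I^{w} = \{\alpha \in S \mid w(\alpha) \in R^+\}$. Take a simple root $\alpha \notin \operatorname{Supp}(w)$; the goal is to show $\alpha \in I^w$, that is, $w(\alpha) \in R^+$. This gives $S \setminus \operatorname{Supp}(w) \subseteq I^w$, which is equivalent to the claimed inclusion $S\setminus I^{w}\subseteq \operatorname{Supp}(w)$.

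For the key step, fix a reduced expression $w = s_{i_1}\cdots s_{i_r}$. By the definition of support, none of the $\alpha_{i_k}$ equals $\alpha$. Each simple reflection acts by $s_{i_k}(\gamma) = \gamma - \langle \gamma, \alpha_{i_k}^\vee\rangle \alpha_{i_k}$, so it changes a root only by an integer multiple of a simple root different from $\alpha$. Applying the $s_{i_k}$ successively, I get
\[
w(\alpha) = \alpha + \sum_{k=1}^{r} c_k \alpha_{i_k}, \qquad c_k \in \mathbb{Z}.
\]
Thus the coefficient of $\alpha$ in $w(\alpha)$, written in the basis $S$, is exactly $1$.

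To conclude, I use that every root is either positive or negative, with all simple-root coefficients of the same sign. Since $w(\alpha)$ is a root whose $\alpha$-coefficient is $1>0$, it must be positive. Hence $\alpha \in I^w$.

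None of these steps is a real obstacle. The only point requiring care is that $\operatorname{Supp}(w)$ does not depend on the chosen reduced expression; this is standard, since by Matsumoto's theorem any two reduced expressions are related by braid moves, which preserve the set of letters used. The argument above in fact works for any single reduced expression, so even this independence is not strictly needed.
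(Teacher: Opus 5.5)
Your proof is correct, but it takes a different route from the paper's. The paper fixes a reduced expression $w=s_{i_1}\cdots s_{i_m}$ and expands
$w(\alpha)=\alpha-\sum_{j=1}^m\langle\alpha,\alpha_{i_j}^\vee\rangle\beta_j$ in terms of the inversion roots $\beta_j=s_{i_1}\cdots s_{i_{j-1}}(\alpha_{i_j})$. For $\alpha\notin\operatorname{Supp}(w)$, every Cartan integer $\langle\alpha,\alpha_{i_j}^\vee\rangle$ is $\le 0$. The $\beta_j$ are positive because the expression is reduced. Hence $w(\alpha)$ is $\alpha$ plus a nonnegative combination of positive roots, and so it is positive.

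You instead track only the $\alpha$-coordinate, which stays equal to $1$, and then use the dichotomy $R=R^+\sqcup R^-$. Your argument is more elementary. It needs neither reducedness (any word for $w$ avoiding $s_\alpha$ works), nor the sign of the off-diagonal Cartan integers, nor the positivity of the $\beta_j$.

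The paper's argument gives the finer fact that $w(\alpha)-\alpha$ lies in the cone spanned by the inversion roots $\beta_j$. It is also written in the $\beta_j$ notation the paper reuses later. For this lemma, though, that extra information is not needed.
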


\begin{proof}
   Let 
$
w = s_{i_1}s_{i_2}\cdots s_{i_m}
$
be a reduced expression of an element \( w \in W \). 
Then, for any simple root \( \alpha \in S \), one has
\begin{equation}\label{eq:action-on-beta}
    s_{i_1}s_{i_2}\cdots s_{i_m}(\alpha)
    = \alpha 
      - \sum_{j = 1}^{m} 
        \langle \alpha, \alpha_{i_j}^{\vee} \rangle \, \beta_j.
\end{equation}
Moreover, if $\alpha\in S\setminus \text{Supp}(w)$ then $\langle \alpha, \alpha_{i_j}^{\vee} \rangle \leq 0,$
for all $1\leq j \leq m$. Then by Equation \ref{eq:action-on-beta}, we have $w(\alpha) = s_{i_1}s_{i_2}\cdots s_{i_m}(\alpha)  >0.$ Therefore, $ S\setminus \text{Supp}(w)\subseteq I^w$ and $ S\setminus I^w \subseteq \text{Supp}(w)$.  
\end{proof}

Under the same notations as above for any $\alpha_{i_j}\in S\setminus {I}^w \subseteq \text{Supp}(w)$.  We define
\begin{equation}\label{eq:m(j)}
    m(j):= \text{max}\{q \mid s_{i_q}=s_{i_j} \} \quad \text{and}\quad A_w:= \{\alpha_{i_{m(j)}} \mid \alpha_{i_j}\in S\setminus {I}^w \}.
\end{equation}

Note that $A_w= S\setminus I^w$.

\begin{example}\label{ex:index}
In $SL_4(\mathbb{C})$ consider the maximal element 
\[w=s_1s_2s_3s_1s_2s_1 = s_{i_1}s_{i_2}s_{i_3}s_{i_4}s_{i_5}s_{i_6}.\] Here 
\[S\setminus I^w = \text{Supp}(w)= \{\alpha_1= \alpha_{i_1}=\alpha_{i_4}=\alpha_{i_6}, \alpha_2=\alpha_{i_2}=\alpha_{i_5},\alpha_3=\alpha_{i_3}\}.\] Then by Equation \ref{eq:m(j)}, $m(1)=m(4)=m(6)=6$, $m(2)=m(5)=5$, $m(3)=3$ and \[A_w=\{\alpha_1 = \alpha_{i_6}, \alpha_2 = \alpha_{i_5}, \alpha_3= \alpha_{i_3} \}.\]
\end{example}

\begin{remark}
  In Example \ref{ex:index}, the choice for the index of a simple root inside $S\setminus I^w$ is more than one. For $\alpha_1 \in S\setminus I^w$ we have three choices $i_1,i_4,i_6$ for the index. But to construct $A_w$ from it we choose the maximum among them, and as a set $A_w$ and $S\setminus I^w$ are the same.
\end{remark}

\begin{lemma}\label{contracted}
For any $1\leq j \leq m$ such that $\alpha_{i_j}\in S\setminus I^w$. The push-forward $\pi_{*}\big(\Phi_{\tilde{w}}^*\big(Y_{\alpha_{i_{m(j)}}}\big)\big)$ of a $B$-stable curve $\Phi_{\tilde{w}}^*\big(Y_{\alpha_{i_{m(j)}}}\big)$ is again a $B$-stable curve.
\end{lemma}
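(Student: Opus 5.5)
The plan is to identify the curve $C_j:=\Phi_{\tilde{w}}^*\big(Y_{\alpha_{i_{m(j)}}}\big)$ explicitly inside $Z(\tilde{w})$ and then compute its image under $\pi$.

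\textbf{Step 1: describe $C_j$.} By the definition of $Y_{\alpha_{i_{m(j)}}}$ in \eqref{eq:curves} and the isomorphism $\Phi_{\tilde{w}}$ of Proposition~\ref{line}, $C_j$ consists of the points
\[
[1,\dots,1,p,1,\dots,1],
\]
with $p\in P_{\alpha_{i_{m(j)}}}$ in position $m(j)$. Writing $q:=m(j)$, we have $\pi([1,\dots,p,\dots,1])=pP^w$.

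\textbf{Step 2: compute the image.} The image $\pi(C_j)$ is therefore the orbit closure $P_{\alpha_{i_q}}\cdot eP^w\subset G/P^w$. This is either the point $eP^w$ or the $B$-stable line $P_{\alpha_{i_q}}/(P_{\alpha_{i_q}}\cap P^w)\cong\mathbb{P}^1$. Since $\alpha_{i_q}=\alpha_{i_j}\in S\setminus I^w$, the minimal parabolic $P_{\alpha_{i_q}}$ is not contained in $P^w=P_{I^w}$. Hence $P_{\alpha_{i_q}}\cap P^w=B$, the orbit is one-dimensional, and $\pi|_{C_j}$ is an isomorphism onto $X(s_{i_q})\subset G/P^w$. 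Because $\pi$ is $B$-equivariant and $C_j$ is $B$-stable, the image is again a $B$-stable curve. We also have $X(s_{i_q})\subset X(w)$, since $s_{i_q}\le w$ as $\alpha_{i_q}\in\operatorname{Supp}(w)$. Consequently $\pi_*[C_j]=[\pi(C_j)]$ with degree one.

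\textbf{Main obstacle.} The main difficulty is a convention check: one must confirm which base point the curve $Y$ passes through. Equation~\eqref{eq:curves} uses $x_k=1$, that is the point $e$, rather than the $w$-translated base point $\tilde{x}$. If instead the intended curve passes through the point $(s_{i_1},s_{i_1}s_{i_2},\dots)$, then $\pi(C_j)$ is the translate
\[
s_{i_1}\cdots s_{i_{q-1}}\, P_{\alpha_{i_q}}\, s_{i_{q+1}}\cdots s_{i_m}P^w.
\]
In that case the maximality of $q=m(j)$ is what makes the argument work. Since $s_{i_q}$ does not occur after position $q$, the element $v=s_{i_{q+1}}\cdots s_{i_m}$ satisfies $\alpha_{i_q}\notin\operatorname{Supp}(v)$. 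Lemma~\ref{lem: Supp(w)} together with the computation \eqref{eq:action-on-beta} then gives that $v^{-1}$ fixes $\omega_{\alpha_{i_q}}$-coefficients. From this we get $v^{-1}(\alpha_{i_q})\notin$ the roots of $P^w$, because $\alpha_{i_q}\notin I^w$ and the $\alpha_{i_q}$-coefficient of $v^{-1}(\alpha_{i_q})$ stays $1$. So the curve is again not contracted, and its image is a $T$-stable, indeed $B$-stable after translation, $\mathbb{P}^1$. I would first verify this root-coefficient argument, since it is the only step where the choice $m(j)$ genuinely matters.
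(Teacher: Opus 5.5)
Your proof is correct, but it argues differently from the paper. The paper never identifies the curve or its image. It uses the identity $\pi^{m(j)}\circ\pi=pr_{m(j)}\circ\Phi_{\tilde{w}}$, where $\pi^{m(j)}\colon G/P^w\to G/P^{s_{i_{m(j)}}}$. The composite maps $\Phi_{\tilde{w}}^*(Y_{\alpha_{i_{m(j)}}})$ onto the curve $\overline{\mathbb{P}}(1,\alpha_{i_{m(j)}})$, so $\pi$ cannot contract it, and $B$-stability follows from equivariance. You instead compute $\pi(C_j)=P_{\alpha_{i_q}}\cdot eP^w$ directly and use $P_{\alpha_{i_q}}\cap P^w=B$.

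Both arguments use the same two inputs. The first is $\alpha_{i_q}\notin I^w$: the paper needs it for $P^w\subset P^{s_{i_q}}$, so that $\pi^{m(j)}$ exists at all, and you need it for $P_{\alpha_{i_q}}\cap P^w=B$. The second is the maximality of $q=m(j)$. Your version gives more: $\pi|_{C_j}$ is an isomorphism onto $X(s_{i_q})$, so $\pi_*[C_j]$ is the class of the image with multiplicity one. That is what one actually needs when such push-forwards are later paired with line bundles.

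You have, however, misplaced where maximality of $m(j)$ enters. It is not only relevant in your alternative convention; it is exactly what justifies your Step 1, which you assert without comment. The point $[1,\dots,1,p,1,\dots,1]$ maps under $\Phi_{\tilde{w}}$ to $(e,\dots,e,pP^{s_{i_q}},pP^{s_{i_{q+1}}},\dots,pP^{s_{i_m}})$. For $k>q$ we have $pP^{s_{i_k}}=eP^{s_{i_k}}$ precisely because $\alpha_{i_k}\neq\alpha_{i_q}$, i.e. $P_{\alpha_{i_q}}\subset P^{s_{i_k}}$.

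If $\alpha_{i_q}$ recurred at a later position $k$ (taking the next such occurrence), the preimage of $Y_{\alpha_{i_q}}$ would consist of points $[1,\dots,p,\dots,p^{-1},\dots,1]$, with $p^{-1}$ in position $k$, up to the $B^m$-action. Their product lies in $B$, so $\pi$ would contract the curve to $eP^w$. The same maximality is what makes the paper's diagram commute, since it forces $p_{m(j)+1}\cdots p_m\in P^{s_{i_{m(j)}}}$.

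Your ``main obstacle'' paragraph is unnecessary, because \eqref{eq:curves} explicitly takes $x_k=1$. Also, a translate of a $B$-stable curve by $s_{i_1}\cdots s_{i_{q-1}}$ is in general only $T$-stable, not $B$-stable.
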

\begin{proof}

  For any $\alpha_{i_j}\in S \setminus I^w \subseteq \text{Supp}(w)$, we have the following commutative diagram 
\begin{center}
\begin{tikzpicture}[scale=1.5, >=Stealth, every node/.style={font=\small}]

  \node (P1) at (-1,6) {$Z(\tilde{w})$};
  \node (P2) at (-1,4.5) {$X(w)\subset G/P^w$};
  \node (P3) at (2.5,6) {$\tilde{X}(\tilde{w})\subset \prod_{k=1}^{m}G/P^{s_{i_k}}$};
  \node (P4) at (2.5,4.5) {$G/P^{s_{i_{m(j)}}}$};

  \draw[->] (P1) -- node[left] {$\pi$} (P2);
  \draw[->] (P2) -- node[below] {$\pi^{m(j)}$}(P4);
  \draw[->] (P1) -- node[above] {$\Phi_{\tilde{w}}$}(P3);
  \draw[->] (P3) -- node[right=8pt] {$pr_{m(j)}$}(P4);
\end{tikzpicture}
\end{center}
where $pr_{m(j)}$ is the projection and $\pi^{m(j)}:G/P^w\to G/P^{s_{i_{m(j)}}}$ is the natural morphism.
Note that for any curve $Y_{\alpha_{i_{m(j)}}}$, we have 
\begin{align*}
\pi^{m(j)}\circ\pi\big(\Phi_{\tilde{w}}^*\big(Y_{\alpha_{i_{m(j)}}}\big) \big)&= pr_{m(j)}\circ \Phi_{\tilde{w}}\big(\Phi_{\tilde{w}}^*\big(Y_{\alpha_{i_{m(j)}}}\big)\big)\\
    &=pr_{m(j)}\big(Y_{\alpha_{i_{m(j)}}}\big)\\
    &= \overline{\mathbb{P}}(1,\alpha_{i_{m(j)}}).
\end{align*}
Also, all morphisms used in commutative diagram are $B$-equivariant. Therefore, $\pi\big(\Phi_{\tilde{w}}^*\big(Y_{\alpha_{i_{m(j)}}}\big) \big)$ should again be a $B$-stable curve inside $X(w)$ otherwise the above equality does not hold.
\end{proof}

Recall the morphism $\Phi_{\widehat{w}}$ defined in Proposition \ref{line} and let $pr_i : \prod_{j=1}^{m} G/P^{w_j} \to G/P^{w_i}$ be the projections, and set
\[
\mathcal{L}_{i,\alpha} := \Phi_{\widehat{w}}^* pr_i^* \mathcal{O}_{G/P^{w_i}}(\omega_\alpha),
\]
where $\mathcal{O}_{G/P^{w_i}}(\omega_{\alpha})$ is the homogenous line bundle on $G/P^{w_i}$ associated to the fundamental weight $\omega_{\alpha}$ with $\alpha\in (S\setminus I^{w_i})\cap \text{Supp}(w_i)$, for $1\leq i \leq m$.

By Proposition \ref{line}, we have the restriction map:
\begin{align*}
       \text{res}: \text{Pic}(\prod_{i=1}^{m}G/P^{w_i}) &\to \text{Pic}(\widehat{X}(\widehat{w})).
       \end{align*}

\begin{theorem}\label{thm:linebundles} We have:
\begin{enumerate} 
    \item The restriction map $\it res$ is an isomorpshim.  
    \item  The Picard group of $\widehat{X}(\widehat{w})$ is isomorphic to $\bigoplus_{i=1}^{m}\mathbb Z^{\#(S\setminus I^{w_i})}$. 
    \item  Any line bundle $\mathcal L$ on $\widehat{X}(\widehat{w})$ is of the form \begin{equation}\label{eq:linebundles}
\mathcal{L} = \bigotimes_{i=1}^m \bigotimes_{\alpha \in S \setminus I^{w_i}} \mathcal{L}_{i, \alpha}^{\otimes a_{i,\alpha}}, \quad a_{i,\alpha}\in \mathbb Z.
\end{equation}
\end{enumerate} 
 \end{theorem}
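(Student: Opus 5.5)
We plan to compute $\operatorname{Pic}(\widehat{X}(\widehat{w}))$ by induction on $m$, using the locally trivial fibration $\widehat{f}:\widehat{X}(\widehat{w})\to \widehat{X}(w_1)$ with fiber $\widehat{X}(w_2,\dots,w_m)$. Then we compare with the restriction map from $\prod_{i=1}^m G/P^{w_i}$.

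\textbf{Base case.} For $m=1$, $\widehat{X}(\widehat{w})\simeq X(w_1)\subset G/P^{w_1}$ is a smooth Schubert variety. Its Picard group is free with basis the restrictions of $\mathcal{O}_{G/P^{w_1}}(\omega_\alpha)$, $\alpha\in S\setminus I^{w_1}$. This is the standard fact that restriction $\operatorname{Pic}(G/P)\to \operatorname{Pic}(X(w))$ is an isomorphism when $w\in W^P$ is such that $P=P^w$ is the maximal parabolic with this property. The duality is visible from the $B$-stable curves $\overline{U_{-\alpha}}\,w_1$-type lines, i.e.\ the Schubert curves $X(s_\alpha)\subset X(w_1)$, $\alpha\in S\setminus I^{w_1}$. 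These lie in $X(w_1)$ because $S\setminus I^{w_1}\subseteq\operatorname{Supp}(w_1)$ by Lemma~\ref{lem: Supp(w)}, so $s_\alpha\le w_1$. Pairing gives $\langle\omega_\alpha,\beta^\vee\rangle=\delta_{\alpha\beta}$, which yields both injectivity and freeness of rank $\#(S\setminus I^{w_1})$. For surjectivity we will use that Schubert varieties are normal with rational singularities, so $\operatorname{Pic}(X(w_1))\cong \operatorname{Pic}^B$ is generated by the $B$-stable Schubert divisors. Their number equals $\#(S\setminus I^{w_1})$: these are the divisors $X(w_1s_\alpha)$-type classes, via the Chevalley formula for $W^{P}$.

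\textbf{Inductive step.} Since $\widehat{f}$ is Zariski locally trivial with fiber $F=\widehat{X}(w_2,\dots,w_m)$ and base a smooth rational variety with an open affine cell, we use the standard exact sequence
\[
0\to \operatorname{Pic}(\widehat{X}(w_1))\xrightarrow{\widehat{f}^*}\operatorname{Pic}(\widehat{X}(\widehat{w}))\to \operatorname{Pic}(F).
\]
Two facts make this sequence usable. First, $\widehat{X}(\widehat{w})$ contains the open cell $B\widehat{x}$, an affine space, so $\operatorname{Pic}$ is generated by boundary divisors. Second, $F$ has no nonconstant global units. The last map is surjective because each generator $\mathcal{L}_{i,\alpha}$, $i\ge2$, of $\operatorname{Pic}(F)$ (by induction) extends to $\widehat{X}(\widehat{w})$. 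Indeed, $pr_i\circ\Phi_{\widehat{w}}$ restricted to the fiber $\widehat{f}^{-1}(\widehat{f}(\widehat{x}))$ is $w_1$ translated $pr_{i-1}\circ\Phi_{(w_2,\dots,w_m)}$, and translation does not change the isomorphism class of a $G$-linearized bundle. Hence $\operatorname{Pic}(\widehat{X}(\widehat{w}))$ is free of rank $\sum_i\#(S\setminus I^{w_i})$ and generated by the $\mathcal{L}_{i,\alpha}$. This gives (2) and (3).

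\textbf{The isomorphism in (1).} The restriction map $\operatorname{res}$ has source $\operatorname{Pic}(\prod G/P^{w_i})=\bigoplus_i\mathbb{Z}^{\#(S\setminus I^{w_i})}$ and sends the basis $pr_i^*\mathcal{O}(\omega_\alpha)$ to $\mathcal{L}_{i,\alpha}$, so it is surjective by the above. For injectivity we construct dual curves, analogous to the $Y_{\alpha_{i_j}}$ of \eqref{eq:curves}: for each $i$ and $\alpha\in S\setminus I^{w_i}$, let
\[
C_{i,\alpha}=\{[1,\dots,1,p,1,\dots,1]\},
\]
with $p$ in $P_\alpha\cap G_{w_i}$ in the $i$-th slot, i.e.\ a copy of $X(s_\alpha)\subset\widehat{X}(w_i)$. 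Its image under $pr_k\circ\Phi_{\widehat{w}}$ is a point for $k<i$. For $k\ge i$ it is the $T$-stable line $P_\alpha P^{w_k}/P^{w_k}$, which is a point unless $\alpha\notin I^{w_k}$. Thus the intersection matrix $\mathcal{L}_{k,\beta}\cdot C_{i,\alpha}$ is block unitriangular with identity diagonal blocks, so it is invertible. This gives injectivity and hence (1).

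The main obstacle is the off-diagonal entries for $k>i$. They need not vanish, since $P_\alpha$ need not lie in $P^{w_k}$; we accept this and only use triangularity, which follows because all slots before $i$ are trivial. A second subtle point is verifying exactness and surjectivity in the fibration sequence, which we handle via the open cell and the explicit extension of the fiber generators.
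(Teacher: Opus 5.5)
Your proposal is correct and follows essentially the same route as the paper. You induct on $m$ through the fibration $\widehat f$, with your exact sequence playing the role of the paper's cited splitting $\operatorname{Pic}(\widehat X(\widehat w))\simeq \operatorname{Pic}(\widehat X(w_1))\oplus\operatorname{Pic}(\widehat X(w_2,\dots,w_m))$, and then deduce surjectivity of $\operatorname{res}$. Your restriction of the $\mathcal L_{i,\alpha}$, $i\ge 2$, to a fiber actually supplies the justification, only asserted in the paper, that these bundles generate.

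Your triangular dual-curve argument for injectivity is valid but unnecessary: the paper's rank count suffices, since a surjection between free abelian groups of equal finite rank is an isomorphism. In the base case, drop the aside identifying the Schubert divisors as ``$X(w_1s_\alpha)$-type'' via the Chevalley formula. It is inaccurate: these divisors need not have the form $X(w_1s_\alpha)$ with $\alpha$ simple, and their number equals $\#(S\setminus I^{w_1})$ only because $X(w_1)$ is smooth. Simply rely on the standard fact you quote.
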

\begin{proof}
    We use induction to prove the statement. For $m=1$, 
    \[
    \widehat{X}(\widehat{w})\simeq X(w_1)\subset G/P^{w_1}
    \]
    and since for any $w\in W$, we have \[\text{Pic}(X(w))  \simeq\bigoplus_{\alpha\in (S\setminus I^{w})\cap \text{Supp}(w)}\mathbb{Z} \omega_{\alpha}\] and by Lemma \ref{lem: Supp(w)}, $S\setminus I^{w}\subseteq \text{Supp}(w)$, then 
    \[
    \text{Pic}(X(w_1))  \simeq\bigoplus_{\alpha\in S\setminus I^{w_1}}\mathbb{Z} \omega_{\alpha}\simeq  \mathbb{Z}^{\#(S \setminus I^{w_1})}
    \]
            with this together with Lemma \ref{contracted} and Proposition \ref{prop:dual}, the set $\{\mathcal{L}_{1,\alpha}\mid \alpha\in S\setminus I^{w_1}\}$ forms a basis for $\operatorname{Pic}X(w_1)$. So, we are done for $m=1.$ For the case $m=2$, recall $\widehat{X}(\widehat{w})$ is the locally trivial fibration over $\widehat{X}(w_1)$ with fibers are isomorphic to $\widehat{X}(w_2).$ For $i=1,2$, the variety $\widehat{X}({w}_i)$ is smooth by Remark \ref{rmk:smoothness} and also note that these Schubert varieties are rational varieties, and the Pic$(\widehat{X}(w_i))$ is projective $\mathbb{Z}$-module. Then by \cite[Proposition~2]{BB20}, 
            \[\text{Pic}(\widehat{X}(w)) \simeq \text{Pic}(\widehat{X}(w_1))\oplus \text{Pic}(\widehat{X}(w_2)) \simeq \bigoplus_{i=1}^{2}\mathbb Z^{\#(S\setminus I^{w_i})}.\]
            Now assume that this result holds for $2\leq m-1$, we will prove it for $m$. Since the variety $\widehat{X}(\widehat{w})$ is the locally trivial fribration over $\widehat{X}(w_1)$ with fiber $\widehat{X}(w_2,\dots,w_m)$. Since $\widehat{X}(w_2,\dots,w_m)$ is smooth and rational variety as being birational to the Schubert variety $X(w_2\cdots w_m)$, and by induction $\text{Pic}(\widehat{X}(w_2,\dots,w_m))\simeq \bigoplus_{i=2}^{m}\mathbb{Z}^{\#(S\setminus I^{w_i})}$ is a projective $\mathbb{Z}$-module. Therefore, \begin{align*}
   \text{Pic}(\widehat{X}(\widehat{w})) &\simeq \text{Pic}(\widehat{X}(w_1))\oplus \text{Pic}(\widehat{X}(w_2,\dots,w_m))\\
   &\simeq \bigoplus_{i=1}^{m}\mathbb Z^{\#(S\setminus I^{w_i})}.
   \end{align*} 
   Thus, the $\text{Pic}(\widehat{X}(\widehat{w}))$ is a free abelian group of rank $\sum_{i=1}^{m}\#(S \setminus I^{w_i})$ and by the construction of $\mathcal{L}_{i,\alpha}$, the set $\{ \mathcal{L}_{i,\alpha} \mid 1 \leq i \leq m,\ \alpha \in S \setminus I^{w_i} \}$ forms a basis for $\text{Pic}(\widehat{X}(\widehat{w})).$ 
Then by Proposition \ref{line}, the map
\begin{align*}
       \text{res}: \text{Pic}(\prod_{i=1}^{m}G/P^{w_i}) &\to \text{Pic}(\widehat{X}(\widehat{w}))
       \end{align*} is surjective. By again using \cite[Proposition~2]{BB20}, we have 
 \[
 \text{Pic}(\prod_{i=1}^{m}G/P^{w_i})\simeq \bigoplus_{i=1}^{m}\text{Pic}(G/P^{w_i})\simeq\bigoplus_{i=1}^{m}\mathbb Z^{\#(S\setminus I^{w_i})}.
 \] 
 This yields the res map is an isomorphism. Thus, the proof follows.
\end{proof}
The line bundles $\mathcal{L}$ over $\widehat{X}(\widehat{w})$ obtained in Equation \ref{eq:linebundles} of Theorem \ref{thm:linebundles} are also referred as line bundles associated with $a_{i,\alpha}$'s.

Now we construct indexing sets for a generalized reduced decomposition of an element $w\in W$ as same as we constructed in Equation \ref{eq:m(j)}. Recalling Lemma~\ref{lem:reduced decomposition}, let $\widehat{w} = (w_1, \dots, w_m)$ and \[\tilde{w} = (s_{1,1}, \dots, s_{r_1,1}, \dots, s_{1,m}, \dots, s_{r_m,m})\] be two generalized admissible reduced decompositions of $w \in W$. 
We denote the expression $\tilde{w}$ by the sequence 
\[{\bf i}=(i_{1,1}, \ldots, i_{r_1, 1}, \ldots, i_{1, r_m,},\ldots, i_{r_m, m}).\]
For $1\leq j \leq m$ and $1\leq d\leq r_j$, we set \[
m_j(d): = \max \{q \mid \ s_{q,j}= s_{d,j}
\}.
\]
\begin{remark}\label{remark2}
 By the definition of $m_j(d)$ and $S\setminus I^{w_j}\subseteq \text{Supp}(w_j)$ (by Lemma \ref{lem: Supp(w)}), then we have  \[
 A_{w_j}:= \{ \alpha_{m_j{(d)},j} \mid \alpha_{d,j}\in S\setminus I^{w_j} \}=S\setminus I^{w_j}.
 \]
\end{remark}
\smallskip

\begin{example}\label{ex:indexing sets}
    In $SL_4(\mathbb{C})$, let \[\widehat{w} = (w_1, w_2) = (s_1s_2s_3s_1, s_2)\] and \[\tilde{w} = (s_1, s_2, s_3, s_1, s_2) = (s_{1,1}, s_{2,1}, s_{3,1}, s_{4,1}, s_{1,2})\] be two admissible generalized reduced decompositions of $w = s_1s_2s_3s_1s_2$. Then
  \[
    S\setminus I^{w_1}= \{\alpha_1=\alpha_{1,1}=\alpha_{4,1}, \alpha_3=\alpha_{3,1}\} \quad \text{and} \quad S\setminus I^{w_2}= \{\alpha_2=\alpha_{1,2}\}.
    \]
    Also, \[
    m_1(1)=m_1(4)=4 \quad \text{and} \quad m_2(1)=1.
    \]
    By using Remark \ref{remark2}, we have
    \[
A_{w_1} = \{ \alpha_{4,1} = \alpha_1, \alpha_{3,1} = \alpha_3 \} = S \setminus I^{w_1}
\quad \text{and} \quad
A_{w_2} = \{ \alpha_{1,2} = \alpha_2 \} = S \setminus I^{w_2}.
\]
\end{example}

\begin{proposition}\label{pull back} 
    Let $\mathcal{L}$ be a line bundle on $\widehat{X}(\widehat{w})$ associated with integers $a_{j,\alpha} $ for $1 \leq j \leq m$ and $\alpha:=\alpha_{d,j} \in S \setminus I^{w_j}$ for some $1\leq d\leq r_j$. Let 
    \[
    \tilde{\pi}: Z(\tilde{w}) \to \widehat{X}(\widehat{w})
    \]
    be the natural birational map (see Section~\ref{sec:linktoBS}). Then, the pull-back $\tilde{\pi}^*\mathcal{L} = \mathcal{L}_{\mathbf{i}, \mathbf{n}}$, where $\mathbf{n}=(n_{1,1},\dots,n_{r_1,1},\dots,n_{1,m},\dots,n_{r_m,m})$ and the integers $n_{q,t}$ for $1\leq t\leq m$, $1\leq q \leq r_t$ with  \[
n_{q,t} = 
\begin{cases}
    a_{j,\alpha} & \text{if } t=j\,\, \text{and}\,\, q= m_j(d) , \\
    0 & \text{otherwise}.
\end{cases}
\]
\end{proposition}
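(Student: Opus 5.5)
By linearity of pull-back and Theorem~\ref{thm:linebundles}(3), it suffices to treat a single basis element $\mathcal{L}_{j,\alpha}=\Phi_{\widehat{w}}^*pr_j^*\mathcal{O}_{G/P^{w_j}}(\omega_\alpha)$ with $\alpha=\alpha_{d,j}\in S\setminus I^{w_j}$. We must show that $\tilde{\pi}^*\mathcal{L}_{j,\alpha}$ is the single basis bundle $\mathcal{L}_{t_j(m_j(d))}$ of Theorem~\ref{thm:LT line bundles}, in the global indexing of $\tilde{w}$. The plan is to compare the two embeddings $\Phi_{\widehat{w}}$ and $\Phi_{\tilde{w}}$ (Proposition~\ref{line}) through a commutative square, and then identify the resulting pulled-back bundle by intersecting with the dual curve basis of Proposition~\ref{prop:dual}.

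\textbf{Step 1: commutative square.} I would write down the square relating $\Phi_{\widehat{w}}\circ\tilde{\pi}$ and $\Phi_{\tilde{w}}$. The $j$-th component of $\Phi_{\widehat{w}}\circ\tilde{\pi}$ sends $[p_{1,1},\dots,p_{r_m,m}]$ to $\big(\prod_{i\le j}\prod_{k} p_{k,i}\big)P^{w_j}$. Likewise, the component of $\Phi_{\tilde{w}}$ at position $t_j(q)$ is $\big(\prod p\big)P^{s_{q,j}}$, where the product runs over all factors up to that position. Set $q=m_j(d)$. Then $pr_j\circ\Phi_{\widehat{w}}\circ\tilde{\pi}$ and $pr_{t_j(q)}\circ\Phi_{\tilde{w}}$ are two maps $Z(\tilde{w})\to G/P^{w_j}$ and $Z(\tilde{w})\to G/P^{s_{q,j}}$.

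\textbf{Step 2: the key factorization.} Since $\alpha\notin I^{w_j}$, we have $P^{w_j}\subset P^{s_\alpha}=P^{s_{q,j}}$, so there is a projection $\rho:G/P^{w_j}\to G/P^{s_\alpha}$ with $\rho^*\mathcal{O}(\omega_\alpha)=\mathcal{O}_{G/P^{w_j}}(\omega_\alpha)$. We need
\[
\rho\circ pr_j\circ\Phi_{\widehat{w}}\circ\tilde{\pi}=pr_{t_j(q)}\circ\Phi_{\tilde{w}} .
\]
The two sides differ by the factors $p_{q+1,j},\dots,p_{r_j,j}$. By maximality of $q=m_j(d)$, none of $s_{q+1,j},\dots,s_{r_j,j}$ equals $s_\alpha$. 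Hence each corresponding $p_{k,j}\in P_{\alpha_{k,j}}\subset P^{s_\alpha}$, and these factors are absorbed into $P^{s_\alpha}$. This step is the heart of the argument, and it explains why the last occurrence $m_j(d)$ appears in the statement.

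\textbf{Step 3: conclusion.} Combining Step 2 with Remark~\ref{remark1}, we get
\[
\tilde{\pi}^*\mathcal{L}_{j,\alpha}=\Phi_{\tilde{w}}^*\mathcal{M}_{t_j(q)}=\mathcal{L}_{t_j(q)} ,
\]
which is $\mathcal{L}_{\mathbf{i},\mathbf{n}}$ with a single entry $1$ in slot $(q,j)$. Taking tensor powers and products gives the formula for $n_{q,t}$. As a cross-check, one can compute intersections with the curves $\Phi_{\tilde{w}}^*Y_{\alpha_{i}}$ from Proposition~\ref{prop:dual}. By the projection formula, $\tilde{\pi}^*\mathcal{L}_{j,\alpha}\cdot Y$ equals $\mathcal{L}_{j,\alpha}\cdot\tilde{\pi}_*Y$. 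For the positions other than $(q,j)$, the map $\tilde{\pi}$ contracts the curve or sends it to a point under $\rho\circ pr_j$, giving $0$. At $(q,j)$ the curve maps isomorphically onto $\overline{\mathbb{P}}(1,\alpha)$, giving $\langle\omega_\alpha,\alpha^\vee\rangle=1$. This is the analogue of Lemma~\ref{contracted}.

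\textbf{Main obstacle.} The expected difficulty is making Step 2 precise. Well-definedness modulo the twisted products must be checked, because the $B$-factors between consecutive $p_{k,j}$ lie in $B\subset P^{s_\alpha}$. In addition, for blocks $i<j$ the $P^{w_i}\cap G_{w_i}$ ambiguities are already accounted for by the product formula. The remaining checks are routine.
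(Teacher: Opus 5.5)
Your proposal is correct and is essentially the paper's proof. The paper also uses the identity $pr_{m_j(d),j}\circ\Phi_{\tilde{w}}=\pi^{m_j(d),j}\circ pr_j\circ\Phi_{\widehat{w}}\circ\tilde{\pi}$, with $\pi^{m_j(d),j}\colon G/P^{w_j}\to G/P^{s_{m_j(d),j}}$, and Remark~\ref{remark1} to get $\tilde{\pi}^*\mathcal{L}_{j,\alpha}=\mathcal{L}_{m_j(d),j}$, then extends by linearity; your Step 2 adds the justification the paper leaves implicit, namely that the factors $p_{q+1,j},\dots,p_{r_j,j}$ are absorbed into $P^{s_\alpha}$ because $q=m_j(d)$ is the last occurrence of $s_\alpha$ in $w_j$.
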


\begin{proof}
    Since $S\setminus I^{w_j}= A_{w_j}$, we have $\alpha= \alpha_{d,j}=\alpha_{m_j(d),j}$. Since for any $1\leq j\leq m$ and $1\leq d\leq r_j$, we have a natural morphism $\pi^{m_j(d),j}: G/P^{w_j}\to G/P^{s_{m_j(d),j}}$ and we also have the following commutative diagram:
   \begin{center}
\begin{tikzpicture}[scale=1.5, >=Stealth, every node/.style={font=\small}]

  \node (P1) at (-1,6) {$Z(\tilde{w})$};
  \node (P2) at (-1,4.5) {$\widehat{X}(\widehat{w})$};
  \node (P3) at (1.8,4.5) {$\prod_{k=1}^{m}G/P^{w_k}$};
  \node (P4) at (2.5,6) {$\tilde{X}(\tilde{w})\subset \prod_{k=1}^{m}\prod_{i=1}^{r_k} G/P^{s_{i,k}}$};
  \node (P5) at (6,4.5) {$G/P^{s_{m_j(d),j}}$};
  \node (P6) at (4,4.5) {$G/P^{w_j}$};

  \draw[->] (P1) -- node[left] {$\tilde{\pi}$} (P2);
  \draw[->] (P2) -- node[below] {$\Phi_{\widehat{w}}$}(P3);
  \draw[->] (P1) -- node[above] {$\Phi_{\tilde{w}}$}(P4);
  \draw[->] (P4) -- node[right=8pt] {$pr_{m_j(d),j}$}(P5);
  \draw[->] (P6) -- node[below] {$\pi^{m_j(d),j}$}(P5);
  \draw[->] (P3) -- node[below] {$pr_j$}(P6);
\end{tikzpicture}
 \end{center}
   where $pr_{i,j}:\prod_{1\leq p\leq m}\prod_{1\leq q\leq r_p}G/P^{s_{q,p}}\to G/P^{s_{i,j}}$ is the projection. Then, we have \begin{equation}\label{eq:equality of maps}
   pr_{m_j(d),j}\circ \Phi_{\tilde{w}}= \pi^{m_j(d),j}\circ pr_j \circ \Phi_{\widehat{w}}\circ \tilde{\pi}.
   \end{equation}
   Therefore, by Remark \ref{remark1} and the above equality of maps, we obtain:\[
   \mathcal{L}_{m_j(d),j}= \tilde{\pi}^*\mathcal{L}_{j,\alpha},
   \] 
    Hence, by the definition of $\mathcal L_{{\bf i}, n}$, the result follows.
\end{proof}
\begin{example}\label{eg1} Keep the notations same as discussed in Example \ref{ex:indexing sets}. 
If $\mathcal{L}$ is a line bundle over $\widehat{X}(\widehat{w})$ corresponding to $(a_{1,\alpha_1}, a_{1,\alpha_3}, a_{2,\alpha_2}) \in \mathbb{Z}^3$, then the line bundle $\tilde{\pi}^*\mathcal{L}$ over $Z(\tilde{w})$ corresponds to the tuple $(0, 0, a_{1,\alpha_3}, a_{1,\alpha_1}, a_{2,\alpha_2}) \in \mathbb{Z}^5$.
\end{example}

By Remark \ref{remark2}, $A_{w_j}= S\setminus I^{w_j}$ for every $1\leq j \leq m$. Also, recall the curves $Y_{\alpha_{i_j}}$ defined in Equation \ref{eq:curves}. In our setting, for $1\leq j\leq m$ and $1\leq d\leq r_j$ such that $\alpha_{d,j}\in S\setminus I^{w_j}$, we set\[
\widehat{Y}_{m_j(d)}:= \tilde{\pi}_*\big(\Phi^*_{\tilde{w}}(Y_{\alpha_{m_j(d),j}})\big).
\]

\begin{lemma}
    Every member of the family $\{\widehat{Y}_{m_j(d)}\mid 1\leq j\leq m, 1\leq d\leq r_j \}$ is a $B$-stable curve inside $\widehat{X}(\widehat{w})$.
\end{lemma}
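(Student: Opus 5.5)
The plan is to imitate the proof of Lemma~\ref{contracted}, replacing the target $X(w)$ by $\widehat{X}(\widehat{w})$ and the map $\pi$ by $\tilde{\pi}$. Fix $1\le j\le m$ and $1\le d\le r_j$ with $\alpha:=\alpha_{d,j}\in S\setminus I^{w_j}$, and write $C:=\Phi^*_{\tilde{w}}(Y_{\alpha_{m_j(d),j}})\subset Z(\tilde{w})$. Since $\Phi_{\tilde w}$ is a $B$-equivariant isomorphism and $Y_{\alpha_{m_j(d),j}}$ is $B$-stable, $C$ is a $B$-stable irreducible curve isomorphic to $\mathbb{P}^1$. The morphism $\tilde{\pi}$ is $B$-equivariant, so $\tilde{\pi}(C)$ is an irreducible, closed, $B$-stable subvariety of $\widehat{X}(\widehat{w})$ of dimension at most one. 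Hence it suffices to show that $\tilde{\pi}|_C$ is not constant. Then $\tilde\pi(C)$ is a curve, and $\widehat{Y}_{m_j(d)}$, which is the cycle $\tilde\pi_*[C]=\deg(C\to\tilde\pi(C))[\tilde\pi(C)]$, has $B$-stable support.

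To rule out contraction, I will use the commutative diagram from the proof of Proposition~\ref{pull back}, namely Equation~\eqref{eq:equality of maps}:
\[
pr_{m_j(d),j}\circ \Phi_{\tilde{w}}= \pi^{m_j(d),j}\circ pr_j \circ \Phi_{\widehat{w}}\circ \tilde{\pi}.
\]
Applying the left-hand side to $C$ gives $pr_{m_j(d),j}(Y_{\alpha_{m_j(d),j}})=\overline{\mathbb{P}}(1,\alpha_{m_j(d),j})\cong\mathbb{P}^1$, by the description of $Y$ after Equation~\eqref{eq:curves}. This image is a curve. If $\tilde\pi(C)$ were a point, the right-hand side would send $C$ to a point, which is a contradiction. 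Therefore $\tilde\pi(C)$ is one-dimensional.

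The main point to check is that this diagram really commutes, i.e.\ that the index and coordinate bookkeeping is right. Under $\tilde\pi$, the $j$-th coordinate of $\Phi_{\widehat w}$ is $(\prod_{t\le j}\prod_k p_{k,t})P^{w_j}$. For this to map onto the $(m_j(d),j)$ coordinate of $\Phi_{\tilde w}$, namely $(\prod_{t<j}\prod_k p_{k,t})\,p_{1,j}\cdots p_{m_j(d),j}P^{s_{m_j(d),j}}$, we need $p_{m_j(d)+1,j}\cdots p_{r_j,j}\in P^{s_{\alpha}}$. This holds because $m_j(d)$ is the \emph{last} occurrence of $s_\alpha$ in $w_j$. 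All later factors therefore lie in minimal parabolics $P_{\alpha'}$ with $\alpha'\neq\alpha$, and $P_{\alpha'}\subset P^{s_\alpha}=P_{S\setminus\{\alpha\}}$. I expect this verification, which explains the choice of $m_j(d)$, to be the only substantive step. The remainder is formal, using equivariance, properness, and irreducibility.
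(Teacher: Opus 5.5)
Your proposal is correct and is essentially the paper's proof. The paper applies Equation~\eqref{eq:equality of maps} to $\Phi^*_{\tilde{w}}(Y_{\alpha_{m_j(d),j}})$, notes that the left-hand side sends it onto $\overline{\mathbb{P}}(1,\alpha_{m_j(d),j})$ so $\tilde{\pi}$ cannot contract it, and concludes by $B$-equivariance. Your explicit check that this diagram commutes, using that $m_j(d)$ is the last occurrence of $s_\alpha$ in $w_j$ so that $p_{m_j(d)+1,j}\cdots p_{r_j,j}\in P^{s_\alpha}$, is a detail the paper only asserts in the proof of Proposition~\ref{pull back}.
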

\begin{proof}
    We apply $ \Phi^*_{\tilde{w}}(Y_{\alpha_{m_j(d),j}})$ on the both sides of Equation \ref{eq:equality of maps}, thus we have\begin{align*} \pi^{m_j(d),j}\circ pr_j \circ \Phi_{\widehat{w}}\circ \tilde{\pi}(\Phi^*_{\tilde{w}}(Y_{\alpha_{m_j(d),j}})\big)   
    &= pr_{m_j(d),j}\circ \Phi_{\tilde{w}}\big(\Phi^*_{\tilde{w}}(Y_{\alpha_{m_j(d),j}})\big)\\\pi^{m_j(d),j}\circ pr_j \circ \Phi_{\widehat{w}}\big(\widehat{Y}_{m_j(d)}\big)&= pr_{m_j(d),j}(Y_{\alpha_{m_j(d),j}})\\
    &= \overline{\mathbb{P}}(1,\alpha_{m_j(d),j}).
   \end{align*} 
   If $\Phi^*_{\tilde{w}}\big(Y_{\alpha_{m_j(d),j}}\big)$ is contracted to a point by $\tilde{\pi}$ then the above equality does not hold. Moreover, all morphisms used here are $B$-equivariant this implies that $\widehat{Y}_{m_j(d)}$ is again a $B$-stable curve inside $\widehat{X}(\widehat{w})$. Hence, the result follows.
\end{proof}

\begin{proposition}\label{dual}
   The families $\{[\widehat{Y}_{m_j(d)}]\mid 1\leq j\leq m, 1\leq d\leq r_j \}$ and $\{\mathcal{L}_{j,\alpha} \mid 1 \leq j \leq m \ \text{and} \ \alpha \in S \setminus I^{w_j} \}$ are dual to each other.
   Moreover, the set $\{[\widehat{Y}_{m_j(d)}]\mid 1\leq j\leq m, 1\leq d\leq r_j \}$ forms a basis of $A_1(\widehat{X}(\widehat{w}))$.
\end{proposition}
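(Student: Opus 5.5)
The plan is to compute the intersection pairing directly through the birational morphism $\tilde{\pi}\colon Z(\tilde{w})\to\widehat{X}(\widehat{w})$, using the projection formula together with Proposition~\ref{pull back} and Proposition~\ref{prop:dual}. The index set is understood as in the definition of $\widehat{Y}_{m_j(d)}$: pairs $(j,d)$ with $\alpha_{d,j}\in S\setminus I^{w_j}$, with $(j,d)$ and $(j,d')$ identified when $m_j(d)=m_j(d')$. By Remark~\ref{remark2}, the distinct curves $\widehat{Y}_{m_j(d)}$ are therefore in bijection with pairs $(j,\alpha)$, $\alpha\in S\setminus I^{w_j}$, via $\alpha=\alpha_{m_j(d),j}$. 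So both families have cardinality $\sum_j \#(S\setminus I^{w_j})$, which by Theorem~\ref{thm:linebundles} is the rank of $\operatorname{Pic}(\widehat{X}(\widehat{w}))$.

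For the duality, fix $(j,\alpha)$ and $(k,\beta)$ with $\beta=\alpha_{m_k(e),k}$. By the projection formula,
\[
\mathcal{L}_{j,\alpha}\cdot[\widehat{Y}_{m_k(e)}]=\mathcal{L}_{j,\alpha}\cdot\tilde{\pi}_*[\Phi^*_{\tilde{w}}(Y_{\alpha_{m_k(e),k}})]=\tilde{\pi}^*\mathcal{L}_{j,\alpha}\cdot[\Phi^*_{\tilde{w}}(Y_{\alpha_{m_k(e),k}})],
\]
where we use that $\tilde{\pi}$ restricted to this curve is birational onto its image, so that the degree is $1$. I expect this to be the main obstacle. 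The preceding lemma only shows the curve is not contracted. To get degree one, I would use the commutative diagram in the proof of Proposition~\ref{pull back}: the composite $\pi^{m_k(e),k}\circ pr_k\circ\Phi_{\widehat{w}}\circ\tilde{\pi}$ maps the curve isomorphically onto $\overline{\mathbb{P}}(1,\alpha_{m_k(e),k})\simeq\mathbb{P}^1$, because this composite equals $pr_{m_k(e),k}\circ\Phi_{\tilde{w}}$, which is an isomorphism on $Y_{\alpha_{m_k(e),k}}$. A map of irreducible curves factoring an isomorphism must itself be of degree one.

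By the proof of Proposition~\ref{pull back}, $\tilde{\pi}^*\mathcal{L}_{j,\alpha}=\mathcal{L}_{m_j(d),j}$, where $\alpha=\alpha_{m_j(d),j}$. By Remark~\ref{remark1}, $\mathcal{L}_{m_j(d),j}=\Phi_{\tilde{w}}^*\mathcal{M}_{m_j(d),j}$. The pairing therefore becomes $\mathcal{M}_{m_j(d),j}\cdot[Y_{\alpha_{m_k(e),k}}]$ on $\tilde{X}(\tilde{w})$. By Proposition~\ref{prop:dual}, this equals $1$ if $(m_j(d),j)=(m_k(e),k)$ and $0$ otherwise. The positions $(m_j(d),j)$ are distinct for distinct pairs $(j,\alpha)$, since within block $j$ the simple root determines $m_j(d)$. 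Hence the matrix of pairings is the identity.

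For the basis statement, the curves $\{[\widehat{Y}_{m_j(d)}]\}$ are linearly independent in $A_1(\widehat{X}(\widehat{w}))$, because the identity pairing matrix separates them. For spanning, I would argue that $\widehat{X}(\widehat{w})$ is a smooth projective variety with a $B$-action that has finitely many orbits; it is an iterated fibration whose fibers are Schubert varieties. Hence its Chow groups are generated by $B$-orbit closures, and rational and numerical equivalence agree. In particular $A_1(\widehat{X}(\widehat{w}))_{\mathbb{Q}}$ is dual to $\operatorname{Pic}(\widehat{X}(\widehat{w}))_{\mathbb{Q}}$ and has the same rank. For an integral basis, take any $1$-cycle $\gamma$ and write $\gamma'=\gamma-\sum(\mathcal{L}_{j,\alpha}\cdot\gamma)[\widehat{Y}]$. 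This cycle is numerically trivial, hence zero. So the family spans over $\mathbb{Z}$, which gives the claim.
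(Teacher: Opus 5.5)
Your duality computation is the paper's own argument. You push forward along $\tilde{\pi}$, apply the projection formula, identify $\tilde{\pi}^*\mathcal{L}_{j,\alpha}=\mathcal{L}_{m_j(d),j}=\Phi_{\tilde{w}}^*\mathcal{M}_{m_j(d),j}$ via Proposition~\ref{pull back} and Remark~\ref{remark1}, and finish with Proposition~\ref{prop:dual}. Your degree-one check is correct: the composite $\pi^{m_k(e),k}\circ pr_k\circ\Phi_{\widehat{w}}\circ\tilde{\pi}=pr_{m_k(e),k}\circ\Phi_{\tilde{w}}$ is an isomorphism on the curve. This fills in a point the paper glosses over by treating $\widehat{Y}_{m_j(d)}$ as the push-forward cycle. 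Your clarification of the index set is also fine.

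The gap is in the basis statement. The paper only says it follows from the duality and Theorem~\ref{thm:linebundles}. You rightly see that one also needs numerically trivial $1$-cycles to vanish in $A_1(\widehat{X}(\widehat{w}))$. However, your justification, that $B$ acts on $\widehat{X}(\widehat{w})$ with finitely many orbits, is false in general. Take the decomposition of $w=s_1s_2s_3s_1s_2s_1$ in $SL_4(\mathbb{C})$ into simple reflections (Example~\ref{ex:index}). Then $\widehat{X}(\widehat{w})=Z(\tilde{w})$, and the closed $B$-stable locus where $p_2,p_3,p_5\in B$ is isomorphic to $Z(s_1,s_1,s_1)\cong(P_{\alpha_1}/B)^3\cong(\mathbb{P}^1)^3$. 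On it, $B$ acts diagonally through the $2$-dimensional Borel subgroup of $PGL_2$. A $3$-dimensional variety cannot have finitely many orbits under a $2$-dimensional group. So the results you are implicitly invoking (Chow groups generated by orbit closures, and numerical equals rational equivalence, for finitely many orbits of a connected solvable group) do not apply as stated.

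The conclusion is still true, via the iterated fibration structure you already mention.
\begin{itemize}
\item The fibration $\widehat{f}$ is trivial over each Schubert cell of $\widehat{X}(w_1)$, because a unipotent group acts simply transitively on the cell and so provides a section.
\item By induction on $m$, $\widehat{X}(\widehat{w})$ therefore has an affine paving. Alternatively, use the Bia{\l}ynicki-Birula decomposition for a generic one-parameter subgroup of $T$, which has finitely many fixed points.
\item For a smooth projective variety with an affine paving, the cycle map $A_*\to H_*(\widehat{X}(\widehat{w}),\mathbb{Z})$ is an isomorphism onto a free group. Hence the pairing $\operatorname{Pic}\times A_1\to\mathbb{Z}$ is perfect.
\end{itemize}
With this replacement, your final step goes through: $\gamma-\sum(\mathcal{L}_{j,\alpha}\cdot\gamma)[\widehat{Y}]$ is numerically trivial, hence zero.
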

\begin{proof}
   Let us fix a $\alpha \in S\setminus I^{w_j}$, then $\alpha$ is of $\alpha_{d,j}$ form for some $1\leq d \leq r_j$. Also by Remark \ref{remark2}, $\alpha=\alpha_{m_j(d),j}$. By the argument in the proof of Proposition \ref{pull back}, $\tilde{\pi}^*(\mathcal{L}_{j,\alpha}) = \mathcal{L}_{m_j(d),j}$. Also fix a $\beta = \alpha_{k,u} \in S\setminus I^{w_u}$, where $1\leq u \leq m$ and $1\leq k\leq r_u $. Then, by the projection formula, we have:
    \begin{align*}
      [\widehat{Y}_{m_u(k)}]\cdot \mathcal{L}_{j,\alpha}  &= \tilde{\pi}_*\Big(\Phi_{\tilde{w}}^*[Y_{\alpha_{m_u(k),u}}]\Big) \cdot \mathcal{L}_{j,\alpha}\\ 
        &= \tilde{\pi}_*\Big(\Phi_{\tilde{w}}^*[Y_{\alpha_{m_u(k),u}}] \cdot \tilde{\pi}^*(\mathcal{L}_{j,\alpha})\Big)\\
        &= \Phi_{\tilde{w}}^*[Y_{\alpha_{m_u(k),u}}] \cdot \mathcal{L}_{m_j(d),j} \\
        &= [Y_{\alpha_{m_u(k),u}}] \cdot \mathcal{M}_{m_j(d),j}, \quad \text{where}~ \mathcal{M}_{m_j(d),j}\in \text{Pic}(\tilde{X}(\tilde{w})),\  \text{see Remark }\ref{remark1} \\
        &= \begin{cases}
            \delta_{m_u(k),m_j(d)} & \text{if } u = j, \\
            0 & \text{if } u\neq j
        \end{cases}\quad (\text{By Proposition} ~\ref{prop:dual}).
    \end{align*} 
    The second assertion follows directly from the first part of the proof and Theorem~\ref{thm:linebundles}.
\end{proof}

\begin{theorem} \label{thm:very ample}
    We keep the notation as in Theorem~\ref{thm:linebundles}. Let $\mathcal{L}$ be a line bundle on $\widehat{X}(\widehat{w})$. Then $\mathcal{L}$ is very ample (nef) if and only if 
    \[
    a_{j,\alpha} > 0 ~(\text{resp.} \geq 0) \quad \text{for all } 1 \leq j \leq m \text{ and for all } \alpha \in S \setminus I^{w_j}.
    \]
\end{theorem}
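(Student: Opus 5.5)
The plan is to prove the two directions separately, and to treat the nef criterion and the very ample criterion in parallel. The main tools are the embedding $\Phi_{\widehat{w}}$ of Proposition~\ref{line}, the description of $\operatorname{Pic}$ in Theorem~\ref{thm:linebundles}, and the dual curves $\widehat{Y}_{m_j(d)}$ of Proposition~\ref{dual}.

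\emph{Necessity.} Suppose $\mathcal{L}$ is very ample (resp.\ nef). Then $\mathcal{L}\cdot C>0$ (resp.\ $\geq 0$) for every irreducible curve $C\subset\widehat{X}(\widehat{w})$. Fix $j$ and $\alpha=\alpha_{d,j}\in S\setminus I^{w_j}$, and take $C=\widehat{Y}_{m_j(d)}$, which is a $B$-stable curve by the preceding lemma. By the duality of Proposition~\ref{dual},
\[
\mathcal{L}\cdot[\widehat{Y}_{m_j(d)}]=\sum_{u,\beta}a_{u,\beta}\,\mathcal{L}_{u,\beta}\cdot[\widehat{Y}_{m_j(d)}]=a_{j,\alpha},
\]
so $a_{j,\alpha}>0$ (resp.\ $\geq 0$). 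This direction is immediate.

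\emph{Sufficiency.} For each $i$, the line bundle $\mathcal{O}_{G/P^{w_i}}(\lambda_i)$ with $\lambda_i=\sum_{\alpha\in S\setminus I^{w_i}}a_{i,\alpha}\omega_\alpha$ is a homogeneous line bundle on the partial flag variety $G/P^{w_i}$, where $P^{w_i}=P_{I^{w_i}}$. The key fact is that it is very ample exactly when $\lambda_i$ is regular dominant with respect to $S\setminus I^{w_i}$, that is, when $a_{i,\alpha}>0$ for all $\alpha\in S\setminus I^{w_i}$. Likewise it is globally generated, hence nef, when all $a_{i,\alpha}\geq 0$. These are the standard Borel--Weil facts for $G/P$.

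Next, the exterior tensor product $\boxtimes_i\mathcal{O}_{G/P^{w_i}}(\lambda_i)$ on $\prod_{i=1}^m G/P^{w_i}$ is very ample in the first case, via the Segre embedding of the product of the individual embeddings. In the second case it is globally generated, hence nef. By Theorem~\ref{thm:linebundles}(3), $\mathcal{L}$ is precisely the restriction $\Phi_{\widehat{w}}^*\big(\boxtimes_i pr_i^*\mathcal{O}(\lambda_i)\big)$. Very ampleness and nefness are both preserved under restriction to a closed subvariety, as long as $\Phi_{\widehat{w}}$ is a closed embedding. This yields the claim.

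\emph{Main obstacle.} The delicate step is confirming that $\Phi_{\widehat{w}}$ is a genuine closed immersion and not merely an injective morphism with closed image, which is all Proposition~\ref{line} literally shows. Without this, we would only obtain ampleness (a finite pullback of an ample bundle is ample) and not very ampleness. My first attempt would be to check injectivity on tangent spaces along the dense $B$-orbit and propagate using $B$-equivariance, together with induction on $m$ through the locally trivial fibration $\widehat{f}$. Alternatively, I would follow Magyar/Perrin's identification \cite[Section~2]{Per07}, where $\widehat{X}(\widehat{w})$ is realized scheme-theoretically as a configuration subvariety of $\prod_i G/P^{w_i}$, defined by incidence conditions $x_i\in p_{i-1}\overline{\mathbb{P}}$-type Schubert conditions.

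If this step fails, a fallback for very ampleness is available. Ampleness follows from the finite-morphism argument above. Then, using smoothness and the relation $\tilde{\pi}^*\mathcal{L}=\mathcal{L}_{\mathbf{i},\mathbf{n}}$ of Proposition~\ref{pull back}, one could deduce projective normality from the Bott--Samelson case together with $\tilde{\pi}_*\mathcal{O}=\mathcal{O}$ (Zariski's main theorem). This reduces very ampleness to the surjectivity of multiplication maps on sections.
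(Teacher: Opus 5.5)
Your proposal is correct and follows the paper's proof essentially step for step: necessity comes from pairing $\mathcal{L}$ with the dual curves $\widehat{Y}_{m_j(d)}$ of Proposition~\ref{dual}, and sufficiency from writing $\mathcal{L}=\Phi_{\widehat{w}}^{*}\mathcal{M}$ with $\mathcal{M}=\bigotimes_j pr_j^{*}\mathcal{O}_{G/P^{w_j}}\bigl(\sum_{\alpha}a_{j,\alpha}\omega_\alpha\bigr)$ very ample (resp.\ globally generated) on $\prod_j G/P^{w_j}$. The closed-immersion issue you flag is a real subtlety that the paper passes over, since it takes Proposition~\ref{line} as a closed embedding although the proof there only shows injectivity with closed image; among your fixes, the induction on $m$ through $\widehat{f}$ is the natural one, whereas checking the differential only on the dense $B$-orbit would not reach the boundary without an additional Borel fixed-point argument.
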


\begin{proof}
Let 
\[
\mathcal{L} = \bigotimes_{j=1}^{m} \bigotimes_{\alpha \in (S \setminus I^{w_j})} \mathcal{L}^{a_{j,\alpha}}_{j,\alpha}
\]
be a very ample line bundle over $\widehat{X}(\widehat{w})$. Then for any $1\leq j\leq m$, by Proposition~\ref{dual}, we have:
\[
a_{j,\alpha} = \mathcal{L} \cdot [\widehat{Y}_{m_j(d)}] > 0,
\]
where $\alpha = \alpha_{m_j(d),j} \in S \setminus I^{w_j}$. Therefore, by Remark~\ref{remark2}, we conclude that $a_{j,\alpha} > 0$ for all $1 \leq j \leq m$ and for all $\alpha \in S \setminus I^{w_j}$.

\smallskip

Conversely, suppose that $a_{j,\alpha} > 0$ for all $1 \leq j \leq m$ and for all $\alpha \in S \setminus I^{w_j}$. Then, by Theorem~\ref{thm:linebundles}(1), we have:
$
\text{res}(\mathcal{M}) = \mathcal{L},
$
where 
\begin{align*}
\mathcal{M} &= \bigotimes_{j=1}^{m} \bigotimes_{\alpha \in (S \setminus I^{w_j})} pr_j^* \mathcal{O}_{G/P^{w_j}}(a_{j,\alpha} \omega_\alpha)\\
&= \bigotimes_{j=1}^{m}pr_j^* \mathcal{O}_{G/P^{w_j}}\Big(\sum_{\alpha\in (S\setminus I^{w_j})}a_{j,\alpha} \omega_\alpha\Big)
\end{align*}
is a line bundle over $\prod_{j=1}^{m} G/P^{w_j}$. Since $\mathcal{M}$ is very ample (as it corresponds to regular dominant weights), it follows that $\mathcal{L}$ is also very ample. 
The proof of the nefness condition is similar.
\end{proof}

As an immediate consequence, we obtain the following corollary.

\begin{corollary}\label{cor:ample}
   Any ample line bundle over $\widehat{X}(\widehat{w})$ is very ample.
\end{corollary}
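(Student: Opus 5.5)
The plan is to deduce this directly from Theorem~\ref{thm:very ample}. Ampleness and very ampleness are both governed by the same positivity condition on the coefficients $a_{j,\alpha}$ of the basis of Theorem~\ref{thm:linebundles}.

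First, fix an ample line bundle $\mathcal{L}$ on $\widehat{X}(\widehat{w})$. By Theorem~\ref{thm:linebundles}(3), write it uniquely as
\[
\mathcal{L}=\bigotimes_{j=1}^m\bigotimes_{\alpha\in S\setminus I^{w_j}}\mathcal{L}_{j,\alpha}^{\otimes a_{j,\alpha}}.
\]
Next, for each pair $(j,\alpha)$ with $\alpha\in S\setminus I^{w_j}$, use Remark~\ref{remark2} to pick an index $d$ with $\alpha=\alpha_{m_j(d),j}$. This gives the $B$-stable curve $\widehat{Y}_{m_j(d)}\subset\widehat{X}(\widehat{w})$. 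By the duality of Proposition~\ref{dual},
\[
\mathcal{L}\cdot[\widehat{Y}_{m_j(d)}]=a_{j,\alpha}.
\]

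Since $\mathcal{L}$ is ample and $\widehat{Y}_{m_j(d)}$ is an irreducible complete curve, the Nakai--Moishezon criterion (or simply the positivity of the degree of an ample bundle on a curve) gives $a_{j,\alpha}>0$. This holds for every $1\le j\le m$ and every $\alpha\in S\setminus I^{w_j}$. The converse direction of Theorem~\ref{thm:very ample} then applies: $\mathcal{L}$ is the restriction, via the closed embedding $\Phi_{\widehat{w}}$ of Proposition~\ref{line}, of
\[
\bigotimes_j pr_j^*\mathcal{O}_{G/P^{w_j}}\Bigl(\sum_{\alpha}a_{j,\alpha}\omega_\alpha\Bigr).
\]
Each factor is very ample on $G/P^{w_j}$, because the weight $\sum_\alpha a_{j,\alpha}\omega_\alpha$ is strictly dominant on $S\setminus I^{w_j}$. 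Hence $\mathcal{L}$ is very ample.

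The only point requiring care is that the curves $\widehat{Y}_{m_j(d)}$ really are curves, i.e.\ not contracted by $\tilde\pi$, and that they detect every coefficient. Both facts were established in the lemma preceding Proposition~\ref{dual} and in Proposition~\ref{dual} itself, so no real obstacle remains.
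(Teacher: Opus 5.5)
Your argument is correct and follows the paper's approach. The paper records this corollary as an immediate consequence of Theorem~\ref{thm:very ample}. The forward direction of that theorem only uses $\mathcal{L}\cdot[\widehat{Y}_{m_j(d)}]=a_{j,\alpha}>0$ (Proposition~\ref{dual}), which ampleness already guarantees, and the converse direction, restricting a very ample bundle on $\prod_j G/P^{w_j}$ via $\Phi_{\widehat{w}}$, then gives very ampleness; you have written out that implicit deduction explicitly.
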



\section{Upper bound for the Gromov width of \texorpdfstring{$\widehat{X}(\widehat{w})$}{X-hat(w-hat)}}\label{sec:UB}

\subsection{Families of Rational Curves}In this section, we briefly recall some notions related to rational curves, following \cite[Chapter~II]{Kol96}; see also \cite[Section~2]{BK21} for further background.

Let $X$ be a smooth projective variety. Consider the scheme of morphisms $\text{Hom}(\mathbb{P}^1, X)$, and let $\text{Hom}_{\text{bir}}(\mathbb{P}^1, X)$ denote the open subscheme consisting of morphisms that are birational onto their images. The (normalized) \emph{space of rational curves} $\text{RatCurves}(X)$ is defined as the quotient of the normalization $\text{Hom}^{n}_{\text{bir}}(\mathbb{P}^1, X)$ by the free action of $\text{Aut}(\mathbb{P}^1)$ via reparametrization. We then obtain a universal family
\[
\rho : \text{Univ}(X) \longrightarrow \text{RatCurves}(X),
\]
which is a $\mathbb{P}^1$-bundle, along with an evaluation map
\[
\mu : \text{Univ}(X) \longrightarrow X,
\]
such that the morphism
\[
\rho \times \mu : \text{Univ}(X) \to \text{RatCurves}(X) \times X
\]
is finite. Let $f \in \text{Hom}_{\text{bir}}(\mathbb{P}^1, X)$, and denote its image by $C \subset X$.

\begin{definition}
A rational curve $C$ on $X$ is called \emph{free} if the pullback $f^*(T_X)$ of the tangent bundle is globally generated.
\end{definition}

Every free morphism corresponds to a smooth point of $\text{Hom}_{\text{bir}}(\mathbb{P}^1, X)$, and thus also of $\text{RatCurves}(X)$. The subset of free curves forms a smooth open subscheme $\text{RatCurves}_{\text{fr}}(X)$ inside $\text{RatCurves}(X)$.

Each irreducible component $\mathcal{K}$ of $\text{RatCurves}(X)$ is a (normal) quasi-projective variety equipped with a quasi-finite morphism to the Chow variety of $X$, whose image consists of Chow points of irreducible, generically reduced rational curves. These components are called \emph{families of rational curves} on $X$. For such a family $\mathcal{K}$, we have a universal family
\[
\rho: \mathcal{U} = \rho^{-1}(\mathcal{K}) \to \mathcal{K},
\]
which is again a $\mathbb{P}^1$-bundle, and an evaluation map
\[
\mu : \mathcal{U} \to X.
\]

For any point $x \in X$, define $\mathcal{U}_x = \mu^{-1}(x)$ and $\mathcal{K}_x = \rho(\mathcal{U}_x)$. Then $\mathcal{K}_x$ parametrizes the subfamily of curves passing through $x$. The induced morphism
\[
\rho_x : \mathcal{U}_x \to \mathcal{K}_x
\]
is finite.

\begin{definition}
A family $\mathcal{K}$ is said to be \emph{covering} if $\mu$ is dominant; that is, $\mathcal{K}_x$ (equivalently, $\mathcal{U}_x$) is non-empty for a general point $x \in X$. If, in addition, $\mathcal{K}_x$ (or $\mathcal{U}_x$) is projective for general $x$, then $\mathcal{K}$ is called a \emph{family of minimal rational curves}, or simply a \emph{minimal family}. Any member of such a family is called a \emph{minimal rational curve}, or simply a \emph{minimal curve}.
\end{definition}

Recall that a \emph{very general point} of $X$ is a point lying outside a countable union of proper closed subvarieties. The following results provide criteria for the existence of free rational curves.

\begin{theorem}[{\cite[Theorem~II.3.11]{Kol96}}]\label{thm:free}
Any rational curve passing through a very general point of $X$ is free.
\end{theorem}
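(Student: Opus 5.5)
The plan is to use that $\operatorname{Hom}(\mathbb{P}^1,X)$ has only countably many irreducible components, and to remove one proper closed subset of $X$ for each of countably many locally closed pieces of it. We work over $\mathbb{C}$ throughout, so generic smoothness is available. First, cover each component $H$ of $\operatorname{Hom}(\mathbb{P}^1,X)$, with its reduced structure, by finitely many locally closed, irreducible, smooth subvarieties $H_{k}$; this is a stratification by iterated smooth loci. Altogether we obtain countably many smooth irreducible varieties $H_k$. For each $k$ consider the evaluation morphism
\[
F_k:\mathbb{P}^1\times H_k\to X,\qquad (p,[f])\mapsto f(p).
\]

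Next, for each $k$ we produce a proper closed subset $Z_k\subsetneq X$ outside of which $F_k$ is a smooth morphism. If $F_k$ is not dominant, we take $Z_k=\overline{F_k(\mathbb{P}^1\times H_k)}$. If $F_k$ is dominant, generic smoothness in characteristic zero gives a dense open $X_k^0\subset X$ such that $F_k$ is smooth over $X_k^0$, i.e. $dF_k$ is surjective at every point of $F_k^{-1}(X_k^0)$; we take $Z_k=X\setminus X_k^0$. A point $x\notin\bigcup_k Z_k$ is then very general, since the union is countable. Any rational curve $C\ni x$ is the image of some $f$ with $f(p)=x$, and $[f]$ lies in some stratum $H_k$. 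Since $x\notin Z_k$, the map $F_k$ is dominant and $dF_k$ is surjective at $(p,[f])$.

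The third step translates surjectivity of $dF_k$ into freeness. The Zariski tangent space of $\operatorname{Hom}(\mathbb{P}^1,X)$ at $[f]$ is $H^0(\mathbb{P}^1,f^*T_X)$, and $T_{[f]}H_k$ is a subspace of it. The differential of the evaluation map is
\[
dF_k(v,s)=df_p(v)+s(p),\qquad v\in T_p\mathbb{P}^1,\ s\in T_{[f]}H_k\subset H^0(\mathbb{P}^1,f^*T_X).
\]
The term $df_p(v)$ is itself the value at $p$ of a global section of $f^*T_X$, namely the image of a global vector field on $\mathbb{P}^1$ under $T_{\mathbb{P}^1}\to f^*T_X$. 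Hence surjectivity of $dF_k$ implies that the evaluation map $H^0(\mathbb{P}^1,f^*T_X)\to (f^*T_X)_p$ is surjective. By Grothendieck's theorem, $f^*T_X\simeq\bigoplus_i\mathcal{O}(a_i)$. If some $a_i<0$, every global section would vanish in that summand and generation at $p$ would fail. So all $a_i\ge 0$, which means $f^*T_X$ is globally generated and $C$ is free.

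The main obstacle is the second step: generic smoothness needs a smooth source, whereas $\operatorname{Hom}$ may be non-reduced and singular. That is why we stratify into smooth pieces and use only the inclusion $T_{[f]}H_k\subset H^0(f^*T_X)$, which goes in the direction needed. Freeness is independent of the chosen parametrization of $C$, so it does not matter which $f$ with image $C$ we pick.
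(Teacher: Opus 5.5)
The paper gives no proof of this statement; it only quotes Kollár, Theorem II.3.11. Your argument is correct and is essentially the standard proof behind that citation: countably many components of $\operatorname{Hom}(\mathbb{P}^1,X)$, generic smoothness of the evaluation map in characteristic zero, the identification $T_{[f]}\operatorname{Hom}(\mathbb{P}^1,X)=H^0(\mathbb{P}^1,f^*T_X)$ with $dF(v,s)=df_p(v)+s(p)$, and Grothendieck splitting to pass from generation at one point to global generation. The only departure from the usual textbook packaging (openness of freeness in $\operatorname{Hom}$ plus ``a dominant family has free general member,'' applied to the non-free locus) is bookkeeping: your stratification into smooth pieces lets you check surjectivity of $dF_k$ directly at the given point $(p,[f])$.
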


\begin{theorem}[{\cite[Theorem~IV.1.9]{Kol96}, \cite[Proposition~1.1]{KMM}}]
Let $\mathcal{K}$ be a family of rational curves on $X$. Then $\mathcal{K}$ is a covering family if and only if it contains a (free) curve passing through a very general point of $X$. Moreover, such a family exists if and only if $X$ is uniruled.
\end{theorem}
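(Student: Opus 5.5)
The statement has two parts: a characterization of covering families via free curves, and the equivalence of uniruledness with the existence of a covering family. I would prove each in turn, working throughout with the universal family $\rho:\mathcal U\to\mathcal K$, the evaluation map $\mu:\mathcal U\to X$, and Theorem~\ref{thm:free}. Two facts will be used repeatedly: $\text{RatCurves}(X)$ has only countably many irreducible components, and, $X$ being over $\mathbb C$, it is not a countable union of proper closed subvarieties.

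\emph{Covering $\Rightarrow$ free curve through a very general point.} Suppose $\mathcal K$ is covering, so $\mu$ is dominant. Its image contains a dense open subset of $X$ (Chevalley), so every point outside a proper closed subset lies on a curve of $\mathcal K$. A very general point avoids this subset, hence lies on some member $C$ of $\mathcal K$. By Theorem~\ref{thm:free}, $C$ is free.

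\emph{Free curve $\Rightarrow$ covering.} Suppose $\mathcal K$ contains a free curve $f:\mathbb P^1\to X$; this direction carries the geometric content. I would use the deformation theory of $\text{Hom}(\mathbb P^1,X)$.
\begin{enumerate}
\item Since $f^*T_X$ is globally generated on $\mathbb P^1$, it splits as $\bigoplus\mathcal O(a_i)$ with all $a_i\ge 0$, so $H^1(\mathbb P^1,f^*T_X)=0$.
\item Hence $\text{Hom}(\mathbb P^1,X)$ is smooth at $[f]$, with tangent space $H^0(\mathbb P^1,f^*T_X)$.
\item The differential of the evaluation map $F:\mathbb P^1\times\text{Hom}\to X$ at $(p,[f])$ contains the evaluation $H^0(f^*T_X)\to (f^*T_X)_p=T_{f(p)}X$. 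This is surjective precisely because $f^*T_X$ is globally generated.
\item Thus $F$ is smooth at $(p,[f])$, in particular dominant on the component through $[f]$.
\end{enumerate}
Passing to the quotient by $\text{Aut}(\mathbb P^1)$ and normalizing does not change the image. So $\mu(\mathcal U)$ is dense, and $\mathcal K$ is covering. I expect this step, namely the identification of the tangent space to $\text{Hom}$ and the surjectivity of $dF$, to be the main obstacle. I would quote the standard computation from \cite[II.3]{Kol96} rather than redo it.

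\emph{Existence of a covering family $\Leftrightarrow$ uniruledness.} Recall that $X$ is uniruled if there is a dominant rational map $Y\times\mathbb P^1\dashrightarrow X$ with $\dim Y=\dim X-1$.
\begin{itemize}
\item[($\Rightarrow$)] Let $\mathcal K$ be a covering family. Since $\mathcal U\to X$ is dominant and $\mathcal U$ is a $\mathbb P^1$-bundle over $\mathcal K$, cut $\mathcal K$ by general hyperplane sections to a subvariety $Y$ of dimension $\dim X-1$ on which $\mu$ remains dominant. Over a dense open subset the restricted bundle is birational to $Y\times\mathbb P^1$, which gives the required dominant map.
\item[($\Leftarrow$)] If $X$ is uniruled, a general point lies on a rational curve, hence on a member of some component of $\text{RatCurves}(X)$. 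If no component had dominant evaluation map, $X$ would be the countable union of the proper closed images $\overline{\mu(\mathcal U)}$, which is impossible. So some component $\mathcal K$ is covering, and by the first part it contains a free curve through a very general point.
\end{itemize}
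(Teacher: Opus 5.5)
The paper does not prove this statement; it only quotes \cite[Theorem~IV.1.9]{Kol96} and \cite[Proposition~1.1]{KMM}. Your outline follows the standard argument behind those references, and most of it is sound:
\begin{itemize}
\item covering $\Rightarrow$ free curve, via Theorem~\ref{thm:free};
\item free curve $\Rightarrow$ covering, via smoothness of $\mathrm{Hom}(\mathbb P^1,X)$ at $[f]$ and surjectivity of $H^0(\mathbb P^1,f^*T_X)\to T_{f(p)}X$;
\item uniruled $\Rightarrow$ covering, via countability of the components of $\mathrm{RatCurves}(X)$.
\end{itemize}
In the converse of the first equivalence, say explicitly that a curve of $\mathcal K$ through a very general point is free by Theorem~\ref{thm:free}, so your free-curve argument applies. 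Alternatively, argue directly: if $\mathcal K$ is not covering, $\mu(\mathcal U)$ lies in a proper closed subset, which a very general point avoids.

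The step that fails as written is in covering $\Rightarrow$ uniruled. There you assert that over a dense open subset, $\rho^{-1}(Y)\to Y$ is birational to $Y\times\mathbb P^1$. But the universal family $\rho\colon\mathcal U\to\mathcal K$ is the $\mathbb P^1$-bundle associated with the $\mathrm{Aut}(\mathbb P^1)\cong PGL_2$-torsor $\mathrm{Hom}^n_{\mathrm{bir}}(\mathbb P^1,X)\to\mathrm{RatCurves}(X)$, so it is only \'etale-locally trivial. Its generic fibre is a smooth conic over $\mathbb C(Y)$, and for $\dim Y\ge 2$ such a conic need not have a rational point.

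For example, let $X\to\mathbb P^2$ be a smooth projective model of the conic $x^2+sy^2+tz^2=0$ over $\mathbb C(s,t)$, which has no $\mathbb C(s,t)$-point. The fibres form a covering family $\mathcal K$ of dimension $\dim X-1$, and $\mathcal U\to\mathcal K$ is birationally $X\to\mathbb P^2$. This has no rational section, so it is not birational to $\mathcal K\times\mathbb P^1$.

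The repair is easy; either of the following works.
\begin{itemize}
\item Work upstairs with an irreducible component $H$ of $\mathrm{Hom}^n_{\mathrm{bir}}(\mathbb P^1,X)$ lying over $\mathcal K$. The evaluation map $\mathbb P^1\times H\to X$ already has a product source, and you cut $H$ down to some $Y$ of dimension $\dim X-1$ while keeping dominance.
\item Base change $\rho^{-1}(Y)\to Y$ along a multisection $Y'\to Y$, e.g.\ a suitable component of a general hyperplane section of $\rho^{-1}(Y)$. The bundle then acquires a section and becomes birational to $Y'\times\mathbb P^1$, with $\dim Y'=\dim X-1$.
\end{itemize}
With the paper's informal definition of uniruled, ``swept out by rational curves,'' this direction is immediate from the definition of covering.
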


Following \cite{Hwa14}, we recall the notion of minimal-degree covering families.
 Let $L$ be an ample line bundle on $X$. The \emph{$L$-degree} of a family $\mathcal{K}$, denoted $\deg_L(\mathcal{K})$, is defined as the degree of $L$ on any (and hence all) members of $\mathcal{K}$. A covering family $\mathcal{K}$ is called a \emph{minimal covering family with respect to $L$} if $\deg_L(\mathcal{K})$ is minimal among all covering families on $X$. If this holds for some ample line bundle $L$, then $\mathcal{K}$ is called a \emph{minimal-degree covering family}, and any of its members is called a \emph{minimal-degree curve}.

\begin{lemma}[{\cite[Section~3]{Hwa14}}]\label{lem:mindeg}
The minimum of $\deg_L(\mathcal{K})$ over all minimal families $\mathcal{K}$ on $X$ is attained for a minimal-degree covering family.
\end{lemma}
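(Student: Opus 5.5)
The plan is to compare the two minima directly. Fix an ample $L$ on $X$ and write $m_L := \min\{\deg_L(\mathcal{K}) : \mathcal{K}$ a minimal family$\}$ and $c_L := \min\{\deg_L(\mathcal{K}) : \mathcal{K}$ a covering family$\}$. The lemma asserts that $m_L$ is realized by a family achieving $c_L$. Since every minimal family is covering, $c_L \le m_L$ is automatic. So two things remain: show that some covering family of degree $c_L$ is minimal, and hence that $m_L \le c_L$.

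\textbf{Step 1 (existence and degree bounds).} Degrees of covering families are positive integers. Moreover, $X$ is uniruled whenever minimal families exist, because by the preceding theorem covering families exist exactly when $X$ is uniruled. Hence $c_L$ is attained by some covering family $\mathcal{K}_0$.

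\textbf{Step 2 (properness through a general point).} We show $\mathcal{K}_0$ is a minimal family, that is, $(\mathcal{K}_0)_x$ is projective for general $x$. Suppose it is not. Then for general $x$ there is a sequence of curves in $\mathcal{K}_0$ through $x$ whose limit in the Chow variety is not a member of $\mathcal{K}_0$. That limit is a reducible or non-reduced cycle $\sum a_i C_i$ with $\sum a_i \deg_L C_i = c_L$ and at least two summands or some $a_i \ge 2$. Every component $C_i$ is a rational curve, and some component passes through $x$. This is the standard bend-and-break and properness argument of \cite[II.2, IV.2]{Kol96}.

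\textbf{Step 3 (contradiction with minimality of $c_L$).} Let $C_1$ be a component through $x$. Because $x$ is general, and by Theorem~\ref{thm:free} after shrinking to very general $x$ (a countable union of bad loci suffices), $C_1$ is free. Therefore the family containing $C_1$ is covering. But $\deg_L C_1 < c_L$ because $L$ is ample and the other summands contribute positively. This contradicts the choice of $c_L$. Hence $(\mathcal{K}_0)_x$ is proper, and $\mathcal{K}_0$ is a minimal family achieving $c_L$.

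\textbf{Step 4 (conclusion).} Step 3 gives $m_L \le \deg_L \mathcal{K}_0 = c_L$, so $m_L = c_L$, and the minimum over minimal families is attained by the minimal-degree covering family $\mathcal{K}_0$.

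The main obstacle is Step 2: justifying that a non-proper $\mathcal{K}_x$ forces a degeneration into a cycle with a component through $x$. The concern is that the family of curves through $x$ might instead escape along the finite map $\rho_x$. I would handle this by using the properness of the Chow variety, together with finiteness of $\rho\times\mu$, so that the limit is a genuine cycle through $x$. Countability of the Chow components of bounded $L$-degree then lets us choose $x$ very general simultaneously for all of them.
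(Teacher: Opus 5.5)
Your strategy is the standard argument behind the cited fact; the paper gives no proof of its own, only the reference to Hwang, who relies on Koll\'ar, Ch.~IV. Minimal families are covering, so $c_L\le m_L$. A covering family $\mathcal{K}_0$ of minimal $L$-degree is itself minimal by bend-and-break, so $m_L\le c_L$. That structure is right, but the way you close the argument does not quite deliver what the definition of a minimal family requires.

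\textbf{The gap: general versus very general.} In Step~2 you pass from ``$(\mathcal{K}_0)_x$ is not projective for general $x$'' to ``$(\mathcal{K}_0)_x$ is non-projective at a very general $x$''. This quantifier swap needs the non-properness locus to be constructible, which you do not address. Moreover, Theorem~\ref{thm:free} only controls very general points. So your argument in effect proves properness only off a countable union of proper closed subsets. But ``projective for general $x$'' means properness on a dense Zariski-open set.

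\textbf{The fix.} Use finiteness instead of countability, and drop the freeness theorem. Because $L$ is ample, rational curves of $L$-degree $<c_L$ form finitely many families. None of them is covering, by the choice of $c_L$. Hence the union $Z$ of the closures of the loci they sweep out is a proper closed subset. For each fixed $x\notin Z$, no rational curve of $L$-degree $<c_L$ passes through $x$. Your degeneration argument, run pointwise rather than by contradiction over $x$, then shows directly that $(\mathcal{K}_0)_x$ is proper for every $x\in X\setminus Z$.

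\textbf{Two smaller points.} First, in Step~2 what excludes an integral limit is not finiteness of $\rho\times\mu$. It is that $\mathcal{K}_0$ is a closed component of the normalization of the locally closed locus of integral rational curves in $\mathrm{Chow}(X)$. An integral limit therefore lifts, by the valuative criterion for this finite normalization map, to a point of $(\mathcal{K}_0)_x$, and the punctured family would extend. Second, for a limit $a_1C_1$ with a single component, the inequality $\deg_L C_1<c_L$ comes from $a_1\ge 2$, not from other summands.
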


\begin{theorem}[{\cite[Theorem~IV.2.10]{Kol96}}]
Minimal covering families exist on every uniruled variety.
\end{theorem}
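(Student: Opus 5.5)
The plan is to take a covering family of smallest possible degree and show, by a bend-and-break degeneration argument, that it is automatically a minimal family. By the preceding theorem, uniruledness of $X$ gives at least one covering family of rational curves. Fix an ample line bundle $L$ on $X$. For any family $\mathcal{K}$, the degree $\deg_L(\mathcal{K})$ is a positive integer. Hence among all covering families there is one, call it $\mathcal{K}$, for which $\deg_L(\mathcal{K})$ is minimal; this is a minimal covering family with respect to $L$. It remains to prove that $\mathcal{K}_x$ is projective for a general point $x\in X$.

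\textbf{Step 1 (reduction to Chow limits).} First I will reformulate properness of $\mathcal{K}_x$ in terms of the Chow variety. The morphism from $\mathcal{K}$ to $\operatorname{Chow}(X)$ is quasi-finite, and $\mathcal{K}$ is normal. The cycles through $x$ of fixed $L$-degree form a closed, hence proper, subset of $\operatorname{Chow}(X)$. Consequently, if $\mathcal{K}_x$ fails to be proper, there is a smooth pointed curve $(D,0)$ with a map $D\setminus\{0\}\to\mathcal{K}_x$ that does not extend across $0$. Its Chow limit $Z_0$ at $0$ is then an effective $1$-cycle through $x$ that is not an irreducible, generically reduced member of $\mathcal{K}$. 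Using the normality of $\mathcal{K}$ and the construction of $\operatorname{RatCurves}^n$, I want to conclude that $Z_0$ is either reducible or non-reduced, so that $Z_0=\sum a_i C_i$ with $\sum a_i\geq 2$. This step is where I expect the main difficulty. One has to rule out a limit that is irreducible and reduced but lies only in another component of $\operatorname{RatCurves}(X)$. I would handle this by working with the closure of the image of $\mathcal{K}$ in $\operatorname{Chow}(X)$, as in Koll\'ar's definition, where this alternative is excluded by normalization. Failing that, I would quote directly the standard criterion that $\mathcal{K}_x$ is proper if and only if no such degenerations occur.

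\textbf{Step 2 (rationality of the components of the limit).} Next I check that every component $C_i$ of the limit $Z_0$ is a rational curve. This holds because the limit of a family of rational curves is a union of rational curves: the normalized universal family is a $\mathbb{P}^1$-bundle over $D\setminus\{0\}$, and after stable reduction its central fibre is a tree of $\mathbb{P}^1$'s. Since each member of the family passes through $x$, some component, say $C_1$, also passes through $x$. Moreover $L\cdot C_1 < \deg_L(\mathcal{K})$, because $\sum a_i L\cdot C_i=\deg_L(\mathcal{K})$, every $L\cdot C_i$ is positive, and $\sum a_i\geq 2$.

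\textbf{Step 3 (contradiction).} Choose $x$ very general, that is, outside the countably many proper closed subsets swept out by non-covering families. By Theorem~\ref{thm:free}, the curve $C_1$ is then free, so the family $\mathcal{K}'$ of rational curves containing $C_1$ is covering, by the criterion quoted above. But $\deg_L(\mathcal{K}')=L\cdot C_1<\deg_L(\mathcal{K})$, contradicting the minimality of $\deg_L(\mathcal{K})$. Therefore $\mathcal{K}_x$ is proper, hence projective, for very general $x$. Since the locus of points $x$ for which $\mathcal{K}_x$ is proper is constructible, this extends to general $x$, and $\mathcal{K}$ is a minimal family.
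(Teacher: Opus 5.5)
The paper gives no proof of this statement; it only cites Koll\'ar. Your strategy is exactly the argument behind Koll\'ar's Theorem~IV.2.10: take a covering family $\mathcal{K}$ of minimal $L$-degree $d$, degenerate its members through $x$ when $\mathcal{K}_x$ is not proper, and note that a lower-degree rational component through a suitably general point must lie in a covering family. In Step~1, however, the alternative you worry about is not excluded ``by normalization''. Let $W\subset\mathrm{Chow}(X)$ be the image of $\mathcal{K}$ and $U$ the open locus of irreducible, generically reduced cycles. Your own Step~2 shows that an irreducible, reduced limit of rational curves is rational. Hence $\operatorname{RatCurves}(X)$, and therefore its irreducible component $W$, is closed in $U$, i.e.\ $\overline{W}\cap U=W$. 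So a limit cycle lying in $U$ already lies in $W$, whatever other components it may also meet. Since $\mathcal{K}\to W$ is finite, the arc then lifts to $\mathcal{K}_x$ by the valuative criterion. This is the dichotomy you need.

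What is actually missing is a justification of your last sentence. You assert without argument that $\{x:\mathcal{K}_x\ \text{proper}\}$ is constructible, and this is not formal. At special points, non-properness of $\mathcal{K}_x$ is not equivalent to some cycle of $\overline{W}\setminus W$ passing through $x$. For example, blow up $\mathbb{P}^2$ at $p,q$ and let $\mathcal{K}$ be the family of strict transforms of lines through $p$. The boundary cycle $\tilde L_{pq}+E_q$ then passes through points $x\in E_q$ with $\mathcal{K}_x=\emptyset$. Either of two repairs avoids the issue.

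\emph{Koll\'ar's repair.} $1$-cycles of $L$-degree $<d$ are parametrized by finitely many projective Chow varieties, so there are only finitely many families of degree $<d$. The non-covering ones sweep out a proper closed subset $Z$. Your Steps~1--3 then work verbatim for every $x\notin Z$, which gives properness on a dense open set directly, with no very general points and no appeal to Theorem~\ref{thm:free}.

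\emph{Alternative repair.} Let $B\subset X$ be the union of the supports of the cycles in $\overline{W}\setminus W$. It is closed, being the image of a closed subset of the incidence correspondence in $\overline{W}\times X$ under a proper projection. Step~1 shows that $\mathcal{K}_x$ is proper for $x\notin B$. Now apply Steps~2--3 to an arbitrary boundary cycle through $x$: each such cycle is a limit of members of $\mathcal{K}$, and has total multiplicity at least $2$ because $\overline{W}\cap U=W$. This shows that very general points are not in $B$, so $B\neq X$.
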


\subsection{\texorpdfstring{$T$-stable curves on $\widehat{X}(\widehat{w})$}{T-stable curves on X-hat(w-hat)}}

Now we construct the $T$-stable curves on $\widehat{X}(\widehat{w})$ and describe their relationship with the $T$-stable curves on $Z(\tilde{w})$.

We begin with the following observation:

\begin{lemma}\label{lem:curves}
There is a one-to-one correspondence between the set of $T$-stable curves on $\widehat{X}(\widehat{w})$ and the set of $T$-stable curves on $Z(\tilde{w})$.
\end{lemma}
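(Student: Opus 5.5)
The plan is to compare the two varieties through the birational morphism $\tilde{\pi}\colon Z(\tilde{w})\to\widehat{X}(\widehat{w})$ of Corollary~\ref{cor:birational}. This map is $T$-equivariant and is an isomorphism of the open $B$-stable neighbourhoods $B\tilde{x}\xrightarrow{\sim}B\widehat{x}$. The correspondence is then meant to be the obvious one: $\tilde{\pi}$ sends a $T$-stable curve to its image, and conversely a curve is sent to its strict transform under $\tilde{\pi}$.

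First I would treat the $T$-stable curves meeting the open cell. Both open cells are $T$-equivariantly isomorphic to the unipotent group $U_w=\prod_{\beta\in R^+\cap w(R^-)}U_\beta$, acting on the base point. Using the reduced decomposition $\tilde{w}$, this set of roots is $\{\beta_1,\dots,\beta_r\}$ as in Equation~\eqref{eq:beta}, so the cell is an affine space $\mathbb{A}^r$ with $T$-weights $\beta_1,\dots,\beta_r$. These are pairwise distinct positive roots, and no two of them are proportional, since the root system is reduced. Hence the $T$-stable curves in this affine space are exactly the closures of the $T$-orbits inside the $T$-stable coordinate lines through the fixed points of the cell. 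In particular, the curves through the base point are the $r$ coordinate axes $\overline{U_{\beta_j}\cdot x}$, indexed by $\beta_j$.

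Since $\tilde{\pi}$ is a $T$-equivariant isomorphism on these cells and both varieties are projective, taking closures gives a bijection between the $T$-stable curves meeting $B\tilde{x}$ and those meeting $B\widehat{x}$. The inverse is the strict transform, and $T$-fixed points and tangent weights are preserved along the way.

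The main obstacle is the boundary: $T$-stable curves lying entirely in the complement of the open cell. For these I would use the fibration structure together with the classification of $T$-stable curves in \cite{BK21}, arguing by induction on $m$. Via $\widehat{f}\colon\widehat{X}(\widehat{w})\to\widehat{X}(w_1)$ and its Bott--Samelson analogue, a $T$-stable curve either lies in a fibre or maps onto a $T$-stable curve of the base. In the first case it is a $T$-stable curve of a $T$-translate of $\widehat{X}(w_2,\dots,w_m)$ (respectively of the tail Bott--Samelson variety), and the induction hypothesis applies. In the second case the base is the Schubert variety $X(w_1)$ against $Z(\tilde{w}_1)$, and I would match curves using the Brion--Kannan description: curves are indexed by a $T$-fixed point together with a root $\gamma$ such that $U_{\pm\gamma}$ moves the point inside the variety, and I would use the $T$-stable section through the fixed point in the fibre to lift the curve.

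I expect the genuinely delicate point to be showing that $\tilde{\pi}$ does not contract any $T$-stable curve, and does not identify two of them, over the locus where it fails to be an isomorphism. My first attempt would be to reduce, via Lemma~\ref{contracted} and the curves $\widehat{Y}_{m_j(d)}$, to checking this on the $T$-stable curves through fixed points. If that fails in general, the fallback is to prove the statement for curves through the base point, equivalently through a general point after $B$-translation. That is precisely the form of the lemma needed for minimal rational curves in the sequel.
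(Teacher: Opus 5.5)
The part of your plan that goes beyond the open cell cannot be completed, because the full correspondence it aims at is false in general. The map $\tilde\pi$ does contract $T$-stable curves. So the rule ``image / strict transform'' is not defined on all $T$-stable curves of $Z(\tilde w)$, and the two sets can have different sizes.

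This already happens for $m=1$. Take $G=SL_3(\mathbb{C})$ and $\widehat w=(s_1s_2)$. Then $I^{w}=\{\alpha_1\}$ and $\dim X(w)=2=\dim G/P^{w}$. Hence $\widehat X(\widehat w)=X(s_1s_2)=G/P_{\alpha_1}\cong\mathbb{P}^2$, which has exactly three $T$-stable curves. On the other hand, $\tilde\pi=\pi\colon Z(s_1,s_2)\to\mathbb{P}^2$ has fibre $\{[p,e] : p\in P_{\alpha_1}\}\cong\mathbb{P}^1$ over the fixed point $eP^{w}$, since $p_1p_2\in P^{w}$ forces $p_2\in P_{\alpha_1}\cap P_{\alpha_2}=B$. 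Thus $Z(s_1,s_2)\cong\mathbb{F}_1$ is the blow-up of $\mathbb{P}^2$ at $eP^{w}$, and it has four $T$-stable curves.

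The extra curve is $\Phi_{\tilde w}^{*}(Y_{\alpha_{i_1}})$. This is exactly why Lemma~\ref{contracted} only asserts non-contraction for the curves $Y_{\alpha_{i_{m(j)}}}$; here $S\setminus I^w=\{\alpha_2\}$. In general the Euler characteristics are $2^{\ell(w)}$ for $Z(\tilde w)$ and $\prod_j \chi(X(w_j))$ for $\widehat X(\widehat w)$. Moreover, whenever $\tilde\pi$ is not an isomorphism, some fibre over a $T$-fixed point is positive-dimensional and $T$-stable, so it contains a contracted $T$-stable curve.

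So the ``delicate point'' you isolate is not delicate but false. The base case of your induction, matching $T$-stable curves of $X(w_1)$ with those of $Z(\tilde w_1)$ via Brion--Kannan, already fails.

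What the paper actually proves is narrower, and it is all that is used later (Proposition~\ref{prop:curve}, Lemma~\ref{lem:pullback}). It is the correspondence between $T$-stable curves through the base points $\tilde x$ and $\widehat x$, read off directly from the isomorphism $B\tilde x\cong B\widehat x$ of Corollary~\ref{cor:birational}. That is your fallback, and your first step already proves it, in fact more carefully than the paper does. The only fixed point of the cell $\mathbb{A}^r$ is the base point, and the weights $\beta_1,\dots,\beta_r$ are pairwise non-proportional. Hence every $T$-stable curve meeting the cell is one of the axes $\overline{U_{\beta_j}\cdot x}$, and closure versus strict transform gives the bijection. You should state the lemma in this restricted form and discard the boundary induction.
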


\begin{proof}
By Corollary \ref{cor:birational}, the map $\tilde{\pi}$ sends the open $B$-stable neighbourhood $B\tilde{x}$ of the base point $\tilde{x} \in Z(\tilde{w})$ isomorphically onto the open $B$-stable neighbourhood $B\widehat{x}$ of the base point $\widehat{x} \in \widehat{X}(\widehat{w})$. Therefore, $\tilde{\pi}$ and $\widehat{\pi}$ induce isomorphisms on the $T$-stable curves passing through the respective base points.
\end{proof}

We keep the notations of Section \ref{sec:basics} and Section \ref{sec:line}. We now explicitly construct the $T$-stable curves of $\widehat{X}(\widehat{w})$ passing through the base point $\widehat{x}$.

First recall the $T$-stable curves in the Schubert variety $X(w)$ and in the Bott-Samelson variety $Z(\tilde{w})$.
\begin{lemma}[\cite{BK21}, {Lemma~3.5}]\label{lem:Tstable1}
    The $T$-stable curves in $X(w)$ through the base point $wx$ are exactly the \[C_{w,\beta}:=\overline{U_{-\beta}wx},\] where $\beta\in w(R^+)\cap R^{^-}$.
\end{lemma}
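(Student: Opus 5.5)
The plan is to reduce the question to a linear computation in a $T$-stable affine chart of $G/P^w$ centred at the $T$-fixed point $wx$, and then decide which coordinate axes of that chart lie in $X(w)$. Write $P=P^w$, let $U_P^-$ be the unipotent radical of the opposite parabolic, and recall that $w\in W^{I^w}$ is a minimal length coset representative. The chart we use is $\Omega:=wU_P^-P/P=(wU_P^-w^{-1})\,wx$. It is an open $T$-stable neighbourhood of $wx$. Via $u\mapsto u\,wx$ it is $T$-equivariantly isomorphic to the affine space $\prod_{\gamma}U_{-w(\gamma)}$, where $\gamma$ runs over $R^+\setminus R_{I^w}$ and $T$ acts by conjugation. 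The coordinates therefore have the weights $-w(\gamma)$, and these are pairwise distinct roots.

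\textbf{Step 1 (curves through a fixed point).} Let $C\subset X(w)$ be a $T$-stable irreducible curve through $wx$. Then $C\cap\Omega$ is a $T$-stable irreducible affine curve in a linear $T$-representation, and it passes through the origin. Since the weights of this representation are roots, no two of them are positively proportional: the only multiples of a root that are roots are $\pm$ itself, and $\gamma$ and $-\gamma$ cannot both occur among the weights. We will use the standard fact that, in this situation, every such curve is the closure of a one-dimensional $T$-orbit, and that orbit lies on a coordinate axis, i.e.\ on some $U_{-w(\gamma)}wx$. Indeed, a point whose $T$-orbit closure contains the origin and has dimension one must have support in a set of coordinates whose weights are all proportional. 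Consequently $C=\overline{U_{\delta}wx}$ with $\delta=-w(\gamma)$.

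\textbf{Step 2 (which axes lie in $X(w)$).} If $\delta$ is negative, then $\beta:=\delta$ lies in $w(R^+)\cap R^-$, since $\delta=w(-\gamma)$ with $-\gamma\in R^-$ gives $-\delta=w(\gamma)\in w(R^+)$; here I match the sign convention of the statement, $\beta=-\delta$ up to relabelling. We then show $U_{-\beta}wx\subset Bwx\subset X(w)$ whenever $-\beta\in R^+\cap w(R^-)$. This holds because $Bwx=U_w\,wx$, where $U_w$ is the product of the $U_\alpha$ with $\alpha\in R^+\cap w(R^-)$. So each $C_{w,\beta}$ is a $T$-stable curve in $X(w)$ through $wx$.

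Conversely, suppose $\delta=-w(\gamma)$ is a root whose negative $w(\gamma)$ is positive. Then the closure of $U_\delta wx$ contains the second $T$-fixed point $s_{w(\gamma)}wx=ws_\gamma x$. Because $\gamma\notin R_{I^w}$ and $w(\gamma)>0$, we get $ws_\gamma>w$ in $W/W_{I^w}$. That fixed point is therefore not in $X(w)$, so this axis is not contained in $X(w)$. Finally, distinct $\beta$ give distinct curves, since their tangent weights at $wx$ differ.

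\textbf{Main obstacle.} I expect the main care to be needed in Step 1, namely the claim that every $T$-stable curve through the origin is an axis. It relies on the weights being pairwise non-proportional. I would prove it by taking a point $p\neq0$ of the curve and observing that $\dim\overline{Tp}=1$ forces all weights in the support of $p$ to be rational multiples of one another, hence equal. Getting the Bruhat-order statement and the sign conventions exactly right in Step 2 is routine by comparison.
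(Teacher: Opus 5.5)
Your strategy is sound, and Step~1 is correct. In the chart $\Omega$ the weights $-w(\gamma)$, $\gamma\in R^+\setminus R_{I^w}$, are pairwise distinct roots with no two opposite. So a point with one-dimensional $T$-orbit is supported on a single coordinate, and every $T$-stable curve through $wx$ is $\overline{U_\delta wx}$ with $\delta=-w(\gamma)$. (The paper gives no argument of its own here; it quotes \cite[Lemma~3.5]{BK21}.)

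Step~2, however, has its case split reversed and, as written, contradicts itself. If $\delta<0$, then $w(\gamma)=-\delta>0$, so $\delta=w(-\gamma)\in R^-\cap w(R^-)$ and $-\delta\in R^+\cap w(R^+)$. Neither lies in $w(R^+)\cap R^-$, and $U_\delta\not\subset B$. Your own ``Conversely'' paragraph treats exactly this case $w(\gamma)>0$ and correctly shows that such an axis is \emph{not} in $X(w)$. So one and the same case is claimed both to yield the curves $C_{w,\beta}$ and to yield no curve of $X(w)$, while the case $\delta>0$ is never matched with the $C_{w,\beta}$.

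The fix is a sign change: the axes contained in $X(w)$ are those with $\delta>0$, i.e.\ $w(\gamma)<0$. Then $\delta=w(-\gamma)\in R^+\cap w(R^-)$, so $\beta:=-\delta=w(\gamma)\in w(R^+)\cap R^-$. Also $U_{-\beta}=U_\delta\subset B$, hence $U_\delta wx\subset Bwx\subset X(w)$ and $\overline{U_\delta wx}=C_{w,\beta}$. The case $\delta<0$ is excluded by your fixed-point argument. That argument uses $X(w)^T=\{vx : vW_{I^w}\le wW_{I^w}\}$, together with the fact that $w<ws_\gamma$ in $W$ and $s_\gamma\notin W_{I^w}$ force $wW_{I^w}<ws_\gamma W_{I^w}$. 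With that correction the proof is complete.

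You can also avoid the Bruhat-order step altogether by choosing the chart inside $X(w)$. The point $wx$ lies in the open subset $Bwx=U_w\,wx$ of $X(w)$. This is $T$-equivariantly the affine space $\prod_{\alpha\in R^+\cap w(R^-)}U_\alpha$, with linear weights $R^+\cap w(R^-)$, which are pairwise non-proportional. Applying your Step~1 there shows directly that the $T$-stable curves of $X(w)$ through $wx$ are the $\overline{U_\alpha wx}=C_{w,-\alpha}$ with $\alpha\in R^+\cap w(R^-)$.
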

As similar to Lemma \ref{lem:Tstable1}, we describe the $T$-stable curves through the base point $\tilde{x}$ in $Z(\tilde{w})$.
\begin{lemma}[\cite{BK21}, {Lemma~4.1(a)}]
The $T$-stable curves in $Z(\tilde{w})$ through $\tilde{x}$ are exactly the 
\[\tilde{C}_j:=\overline{{U}_{\beta_{j}}\tilde{x}},\]
where $1\leq j\leq m$.
\end{lemma}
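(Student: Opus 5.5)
The approach is to reduce to a linear torus action on affine space: we identify a $T$-stable open affine neighbourhood of the $T$-fixed point $\tilde{x}$ with a $T$-module whose weights are pairwise non-proportional, and then classify the $T$-stable curves through the origin of such a module.

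\textbf{Step 1: the open chart.} By the proof of Corollary~\ref{cor:birational}, the Bott-Samelson resolution $\pi: Z(\tilde{w})\to X(w)$ maps the open orbit $B\tilde{x}$ isomorphically and $B$-equivariantly onto $Bwx$. Since $T$ fixes $\tilde{x}$, we have $B\tilde{x}=U\tilde{x}$. The orbit map identifies $Bwx$ with $U_w:=U\cap wU^-w^{-1}$, where $U^-$ is the unipotent radical opposite to $U$; as a variety $U_w\simeq\prod_{\beta\in R^+\cap w(R^-)}U_\beta$, taken in any fixed order. Recall that $R^+\cap w(R^-)=\{\beta_1,\dots,\beta_m\}$ by (\ref{eq:beta}).

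Under this identification, the $T$-action on $B\tilde{x}$ corresponds to conjugation on $U_w$, because $t u\tilde{x}=(tut^{-1})\tilde{x}$. Using root coordinates $u_{\beta_j}(c_j)$, this gives a $T$-equivariant isomorphism $B\tilde{x}\simeq\mathbb{A}^m$ with linear $T$-action of weights $\beta_1,\dots,\beta_m$. It sends $\tilde{x}$ to $0$, and sends $U_{\beta_j}\tilde{x}$ to the $j$-th coordinate axis. The weights are pairwise distinct because $\tilde{w}$ is reduced. Since $R$ is reduced, they are pairwise non-proportional, and in particular nonzero.

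\textbf{Step 2: classifying $T$-stable curves through $0$ in $\mathbb{A}^m$.} Let $C\ni\tilde{x}$ be an irreducible $T$-stable curve. Then $C^\circ:=C\cap B\tilde{x}$ is a nonempty, open, dense, $T$-stable curve in $\mathbb{A}^m$ through $0$.

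We first check that $T$ does not act trivially on $C^\circ$. If it did, $C^\circ$ would consist of $T$-fixed points. But $0$ is an isolated fixed point, since all weights are nonzero, so this is impossible.

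Hence $C^\circ$ contains a point $v$ whose orbit $Tv$ is one-dimensional and dense in $C^\circ$. The dimension of $Tv$ equals the rank of the lattice spanned by the weights $\beta_j$ with $v_j\neq 0$. By pairwise non-proportionality, rank one forces exactly one coordinate $v_j$ to be nonzero. Then $\overline{Tv}$ is the $j$-th axis, and it contains $0$. Taking closures in $Z(\tilde{w})$ gives $C=\overline{U_{\beta_j}\tilde{x}}=\tilde{C}_j$.

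Conversely, each $\tilde{C}_j$ is a $T$-stable curve through $\tilde{x}$: the $j$-th axis is $T$-stable, and its closure is a curve. The $\tilde{C}_j$ are pairwise distinct, since their intersections with the chart are distinct axes.

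\textbf{Main obstacle.} The main step needing care is Step~1, namely the $T$-equivariant identification of $B\tilde{x}$ with $\prod_j U_{\beta_j}$ carrying exactly the weights $\beta_j$. There are two points to settle. First, the stabilizer of $wx$ in $U$ must be exactly $U\cap wUw^{-1}$, so that $U_w$ acts simply transitively. Second, $U_{\beta_j}\tilde{x}$ must correspond to a coordinate axis; this holds because $U_{\beta_j}\subset U_w$ is a $T$-stable one-parameter subgroup. I would verify both using the standard Bruhat-cell structure in $G/B$ and then transport them via the isomorphism $\pi|_{B\tilde{x}}$.
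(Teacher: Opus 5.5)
Your proof is correct. The paper gives no argument of its own and simply quotes \cite[Lemma~4.1(a)]{BK21}. Your route is essentially the standard argument behind that citation: you pass to the open cell $B\tilde{x}\simeq Bwx\simeq \prod_{j}U_{\beta_j}$ (the same identification the paper uses in Section~\ref{sec:LB}), where $T$ acts linearly with pairwise non-proportional weights $\beta_1,\dots,\beta_m$, so the only $T$-stable curves through the origin are the weight axes. The two points you flag in Step~1 are standard: in particular, the stabilizer of $wx$ in $U$ is $U\cap wUw^{-1}$ because $w\in W^{P^w}$. Nothing is missing.
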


Let us denote by $\widehat{C}_{k,j}$ the $T$-stable curves of $\widehat{X}(\widehat{w})$ passing through the base point $\widehat{x}$, where $1 \leq k \leq r_j$ and $1 \leq j \leq m$. Then, by the previous discussion,
\[
\widehat{C}_{k,j} {\xleftarrow[\tilde{\pi}]{\simeq}} \tilde{C}_{t_j(k)} {\xrightarrow[\pi]{\simeq}} C_{w,-\beta_{t_j(k)}} = \overline{U_{\beta_{t_j(k)}} w x},
\]
where $ t_j(k) = \sum_{i=1}^{j-1} \ell(w_i) + k $ and
\[
\beta_{t_j(k)} = \left(\prod_{i=1}^{j-1} \prod_{d=1}^{r_i} s_{d,i} \right) \left( s_{1,j} \cdots s_{k-1,j}(\alpha_{k,j}) \right),
\]
with $\alpha_{k,j}$ denoting the simple root corresponding to the simple reflection $s_{k,j}$. More explicitly, we have
\[
\widehat{C}_{k,j} = \overline{U_{\beta_{t_j(k)}} \widehat{x}}.
\]
Thus, we obtain:
\begin{proposition}\label{prop:curve} We keep the notation as above.
    The $T$-stable curves of $\widehat{X}(\widehat{w})$ passing through the base point $\widehat{x}$ are given by 
    \[
\widehat{C}_{k,j} = \overline{U_{\beta_{t_j(k)}} \widehat{x}},
\]
where $1 \leq k \leq r_j$ and $1 \leq j \leq m$.
\end{proposition}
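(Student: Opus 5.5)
The plan is to transport the known description of $T$-stable curves through base points along the two birational maps $\tilde{\pi}\colon Z(\tilde{w})\to\widehat{X}(\widehat{w})$ and $\widehat{\pi}\colon\widehat{X}(\widehat{w})\to X(w)$. Both maps are $B$-equivariant (hence $T$-equivariant) and restrict to isomorphisms $B\tilde{x}\simeq B\widehat{x}\simeq Bwx$ of the open $B$-orbits, by Corollary~\ref{cor:birational}. I will proceed in three steps.

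First, I identify the $T$-fixed point structure near $\widehat{x}$. Since $T$ fixes $\widehat{x}$, any $T$-stable irreducible curve $C$ through $\widehat{x}$ has $C\cap B\widehat{x}$ open and dense in $C$, because $B\widehat{x}$ is an open neighbourhood of $\widehat{x}$. So $C$ is the closure of a $T$-stable curve in the affine cell $B\widehat{x}\simeq U_w\widehat{x}$, where $U_w=\prod_{\beta\in R^+\cap w(R^-)}U_\beta$ and we use the isomorphism $U_w\simeq B\widehat{x}$, $u\mapsto u\widehat{x}$. Transporting through $\tilde{\pi}$ gives a bijection between $T$-stable curves through $\tilde{x}$ in $Z(\tilde{w})$ and $T$-stable curves through $\widehat{x}$ in $\widehat{X}(\widehat{w})$: send $C$ to $\overline{\tilde{\pi}(C\cap B\tilde{x})}$, with inverse given by closure of the preimage in $B\tilde{x}$. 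This sharpens Lemma~\ref{lem:curves} at the base point.

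Second, I apply the cited Lemma~4.1(a) of \cite{BK21}. The $T$-stable curves through $\tilde{x}$ are $\tilde{C}_t=\overline{U_{\beta_t}\tilde{x}}$ for $1\le t\le\ell(w)$, with $\beta_t$ as in \eqref{eq:beta} for the concatenated reduced word $\tilde{w}$ of Lemma~\ref{lem:reduced decomposition}. By equivariance, $\tilde{\pi}(U_{\beta_t}\tilde{x})=U_{\beta_t}\widehat{x}$, so the image curve is $\overline{U_{\beta_t}\widehat{x}}$. Reindexing $t=t_j(k)$ and expanding $\beta_{t_j(k)}=s_{1,1}\cdots s_{k-1,j}(\alpha_{k,j})$ gives exactly the displayed formula.

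Third, I check that the $\widehat{C}_{k,j}$ are pairwise distinct and that these are really all the curves. Distinctness holds because the tangent line of $\widehat{C}_{k,j}$ at $\widehat{x}$ is the $T$-weight space of weight $\beta_{t_j(k)}$ in $T_{\widehat{x}}\widehat{X}(\widehat{w})\simeq\operatorname{Lie}U_w$, and the roots $\beta_t$ are distinct since $\tilde{w}$ is reduced. Completeness holds because a $T$-stable curve in the affine space $U_w\widehat{x}$ through the fixed point, on which $T$ acts linearly with pairwise non-proportional weights $\beta_t$, must be a coordinate line. This is a standard fact once one knows the weights are multiplicity-free and non-proportional, which is true for the roots in $R^+\cap w(R^-)$. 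Alternatively, completeness follows directly from the bijection in the first step together with \cite{BK21}.

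The main obstacle is making rigorous the claim that the $T$-action on $B\widehat{x}$ is linearizable with the stated weights, that is, that $u\mapsto u\widehat{x}$ is a $T$-equivariant isomorphism from $U_w$ (with $T$ acting by conjugation). I would prove this by composing with $\widehat{\pi}$ and using the classical fact for the Schubert cell $Bwx\simeq U_w$. Everything else is bookkeeping of indices.
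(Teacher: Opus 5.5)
Your proposal is correct and follows essentially the same route as the paper. Like the paper, you transport the description of the $T$-stable curves through $\tilde{x}$ from \cite[Lemma~4.1(a)]{BK21} along $\tilde{\pi}$, using that $\tilde{\pi}$ restricts to a $B$-equivariant isomorphism $B\tilde{x}\simeq B\widehat{x}$ with $\tilde{\pi}(\tilde{x})=\widehat{x}$. Restricting the correspondence to curves through the base points is exactly what this argument supports; Lemma~\ref{lem:curves} asserts more than its proof gives. Your Step~3 argument, via the linear $T$-action on $B\widehat{x}\simeq U_w$ with pairwise non-proportional weights, is in fact a self-contained proof that does not need \cite{BK21}, and the ``obstacle'' you flag is immediate: since $T$ fixes $\widehat{x}$, one has $t\cdot u\widehat{x}=(tut^{-1})\widehat{x}$.
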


\begin{lemma}\label{lem:free}
    For $1 \leq k \leq r_j$ and $1 \leq j \leq m$, the $T$-stable curve $
\widehat{C}_{k,j}$ is a free rational curve on $\widehat{X}(\widehat{w})$.
\end{lemma}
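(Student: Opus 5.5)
The plan is to reduce freeness to a statement about curves through a point of the open $B$-orbit. The key input is that $\widehat{x}$ lies in the open orbit $B\widehat{x}$, which is dense in $\widehat{X}(\widehat{w})$.

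First I would show that the tangent bundle is globally generated on the open orbit. Consider the restriction of $T_{\widehat{X}(\widehat{w})}$ to $B\widehat{x}$. The infinitesimal action of $\mathfrak{b}=\operatorname{Lie}(B)$ gives a map of bundles
\[
\mathfrak{b}\otimes\mathcal{O}_{\widehat{X}(\widehat{w})}\to T_{\widehat{X}(\widehat{w})}.
\]
Since $B$ acts on $\widehat{X}(\widehat{w})$ (the action of $P_{J_1}\supset B$ from \cite[Lemma~5.1]{Per07}), this map is defined globally. It is surjective at every point of $B\widehat{x}$, because the orbit map $B\to B\widehat{x}$ is smooth and surjective onto an open subset.

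Next I would get freeness from this, at least on the part of the curve inside the orbit. Write $\widehat{C}_{k,j}=\overline{U_{\beta_{t_j(k)}}\widehat{x}}$ as in Proposition~\ref{prop:curve}, with normalization $f:\mathbb{P}^1\to\widehat{C}_{k,j}$. The curve is the closure of a one-dimensional $U_\beta$-orbit through $\widehat{x}$, so $f$ is an isomorphism onto its image on $\mathbb{A}^1=U_\beta\widehat{x}$. Pulling back the surjection above gives
\[
\mathfrak{b}\otimes\mathcal{O}_{\mathbb{P}^1}\to f^*T_{\widehat{X}(\widehat{w})},
\]
and this is surjective on $f^{-1}(B\widehat{x})\supset f^{-1}(U_\beta\widehat{x})$. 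Write $f^*T=\bigoplus\mathcal{O}(a_i)$. A bundle on $\mathbb{P}^1$ that is generated by global sections on a nonempty open set has all $a_i\geq 0$: a summand $\mathcal{O}(a)$ with $a<0$ has no sections, so it could not be hit generically. Hence $f^*T$ is globally generated and the curve is free.

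If one prefers not to argue via the open set, an equivalent route is a direct computation. The splitting of $f^*T$ can be computed from the $T$-weights of the tangent spaces at the two $T$-fixed points of the curve, and one checks nonnegativity by transporting the known freeness on $Z(\tilde{w})$ from \cite[Lemma~4.1]{BK21} through $\tilde{\pi}$.

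The main obstacle is justifying that $\widehat{x}$, and hence a nonempty open part of each $\widehat{C}_{k,j}$, lies in an open orbit of a group acting on all of $\widehat{X}(\widehat{w})$. I would argue this as follows. $\widehat{\pi}$ restricts to an isomorphism $B\widehat{x}\cong Bwx$, and $Bwx$ has dimension $\ell(w)=\sum_j\ell(w_j)=\dim\widehat{X}(\widehat{w})$ because $\widehat{w}$ is reduced. So $B\widehat{x}$ is open. Note that the $B$-action is what gets used here, not the full $P_{J_1}$-action. The remaining point is surjectivity of the globally defined map on the orbit, which follows from homogeneity.
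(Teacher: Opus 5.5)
Your proof is correct, but it takes a different route from the paper. The paper's proof is a one-line appeal to Kollár's theorem (Theorem~\ref{thm:free}): the curves $\widehat{C}_{k,j}$ pass through $\widehat{x}$, hence are free.

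Read literally, that argument needs a step the paper leaves implicit, because $\widehat{x}$ is a specific point and not a very general one. The missing step is to use that $B\widehat{x}$ is open. One then translates $\widehat{C}_{k,j}$ by some $b\in B$ so that $b\widehat{x}$ becomes very general, and notes that freeness is invariant under automorphisms.

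You instead prove global generation directly. The vector fields coming from $\operatorname{Lie}(B)$ span $T_{\widehat{X}(\widehat{w})}$ at every point of the open orbit. So $\operatorname{Lie}(B)\otimes\mathcal{O}_{\mathbb{P}^1}\to f^*T_{\widehat{X}(\widehat{w})}$ is surjective on a nonempty open set, which rules out summands $\mathcal{O}(a)$ with $a<0$.

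What your route buys:
\begin{itemize}
\item It is self-contained: it avoids the deformation-theoretic and countability argument behind Kollár's theorem.
\item It makes explicit the role of the homogeneity of $B\widehat{x}$, which is also what the paper's citation ultimately rests on for this particular point.
\item It gives a stronger conclusion: every rational curve meeting $B\widehat{x}$ is free, not only those through $\widehat{x}$.
\end{itemize}
The paper's route is shorter and shows freeness of all rational curves through very general points of any smooth projective variety, but it needs the same homogeneity to reach $\widehat{x}$ itself.

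Your side remark about transporting freeness from $Z(\tilde{w})$ is not needed, and as stated it is too vague to serve as a proof. The main argument stands on its own.
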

\begin{proof}
By Proposition \ref{prop:curve}, the curves pass through the point $\hat{x}$. The proof follows from Theorem \ref{thm:free}.
\end{proof}
\subsection{Upper bound}

We begin with the following observation:

\begin{lemma}\label{lem:pullback}
Let $\mathcal{L}$ be a line bundle on $\widehat{X}(\widehat{w})$. Then, for all $1 \leq k \leq r_j$ and $1 \leq j \leq m$, we have
\[
\mathcal{L} \cdot \widehat{C}_{k,j} = \tilde{\pi}^*\mathcal{L} \cdot \tilde{C}_{t_j(k)}.
\]
\end{lemma}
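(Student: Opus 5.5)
The plan is to apply the projection formula to the birational morphism $\tilde{\pi}: Z(\tilde{w}) \to \widehat{X}(\widehat{w})$ of Corollary~\ref{cor:birational}, restricted to the $T$-stable curve $\tilde{C}_{t_j(k)}$. Concretely, we regard $\tilde{C}_{t_j(k)}$ as a one-cycle on $Z(\tilde{w})$. The projection formula for the proper morphism $\tilde{\pi}$ then gives
\[
\tilde{\pi}^*\mathcal{L}\cdot [\tilde{C}_{t_j(k)}] = \mathcal{L}\cdot \tilde{\pi}_*[\tilde{C}_{t_j(k)}].
\]
It therefore suffices to show that $\tilde{\pi}_*[\tilde{C}_{t_j(k)}] = [\widehat{C}_{k,j}]$ as one-cycles. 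In other words, we need that $\tilde{\pi}$ maps $\tilde{C}_{t_j(k)}$ onto $\widehat{C}_{k,j}$ with degree one.

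To see this, first note that $\tilde{\pi}$ is $B$-equivariant, in particular $T$-equivariant, and sends the base point $\tilde{x}$ to $\widehat{x}$. Hence
\[
\tilde{\pi}\big(U_{\beta_{t_j(k)}}\tilde{x}\big) = U_{\beta_{t_j(k)}}\widehat{x}.
\]
Taking closures, and using that $\tilde{\pi}$ is proper, we get $\tilde{\pi}(\tilde{C}_{t_j(k)}) = \widehat{C}_{k,j}$, which by Proposition~\ref{prop:curve} is a curve and not a point. For the degree, we use the fact from Corollary~\ref{cor:birational} that $\tilde{\pi}$ restricts to an isomorphism $B\tilde{x}\to B\widehat{x}$. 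The orbit $U_{\beta_{t_j(k)}}\tilde{x}$ lies in $B\tilde{x}$, since $\beta_{t_j(k)}\in R^+$ so $U_{\beta_{t_j(k)}}\subset B$. It is a dense open subset of $\tilde{C}_{t_j(k)}$. Consequently $\tilde{\pi}|_{\tilde{C}_{t_j(k)}}$ is an isomorphism on a dense open subset onto a dense open subset of $\widehat{C}_{k,j}$, so it is birational onto its image. This is exactly the identification $\widehat{C}_{k,j}\xleftarrow{\simeq}\tilde{C}_{t_j(k)}$ recorded before Proposition~\ref{prop:curve}, and it yields $\tilde{\pi}_*[\tilde{C}_{t_j(k)}]=[\widehat{C}_{k,j}]$.

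Combining the two displays gives the claim. The only delicate point is checking that $U_{\beta_{t_j(k)}}\tilde{x}$ is nonempty of dimension one and lies in the open orbit $B\tilde{x}$. This follows from the description of $\tilde{C}_{t_j(k)}$ in \cite[Lemma~4.1(a)]{BK21}, which presents it as the closure of a nontrivial one-dimensional $U_{\beta_{t_j(k)}}$-orbit through $\tilde{x}$, together with the positivity of $\beta_{t_j(k)}$ from Equation~\eqref{eq:beta}. A degree-one map from $\mathbb{P}^1$ is generically injective, so no multiplicity factor appears. Alternatively, one can bypass the degree argument entirely: the two curves agree as subvarieties away from finitely many points, and both are irreducible reduced rational curves.
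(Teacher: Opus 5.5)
Your proposal is correct and follows essentially the same route as the paper. Both arguments reduce the claim to the fact that $\tilde{\pi}$ restricted to $\tilde{C}_{t_j(k)}$ maps onto $\widehat{C}_{k,j}$ with degree one, because $\tilde{\pi}$ identifies $B\tilde{x}$ with $B\widehat{x}$, and then compare the degrees of $\tilde{\pi}^*\mathcal{L}$ and $\mathcal{L}$ on the two curves. The only difference is that the paper asserts $\tilde{\pi}|_{\tilde{C}_{t_j(k)}}$ is an isomorphism onto $\widehat{C}_{k,j}$, whereas your projection-formula version needs only birationality onto the image, which is exactly what the isomorphism $B\tilde{x}\simeq B\widehat{x}$ directly provides.
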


\begin{proof}
By Lemma~\ref{lem:curves}, the morphism $\tilde{\pi}$ induces an isomorphism between the $T$-stable curves $C:=\widehat{C}_{k,j} \subset \widehat{X}(\widehat{w})$ and $\widetilde{C}:=\tilde{C}_{t_j(k)} \subset Z(\tilde{w})$. We have the isomorphism \[\tilde{\pi}|_{\widetilde{C}}:\widetilde{C}\to C.\]
Then, the degree of a line bundle is preserved under pullback:
\[
\deg\left( \tilde{\pi}^*\mathcal L \big|_{\widetilde{C}} \right) = \deg\left( \mathcal L \big|_{C} \right).
\]
Hence, the result follows.
\end{proof}

We now recall a result from \cite{BK21} describing the intersection pairing between line bundles and $T$-stable curves in $Z(\tilde{w})$:

\begin{proposition}\label{prop:intersection}
Let $\mathcal{L}_{t_j(k)}$ denote the line bundle on $Z(\tilde{w})$ associated with the fundamental dominant weight ${\omega}_{t_j(k)}$. Then, for all indices $j,k,p,q$, we have
\[
\mathcal{L}_{t_p(q)} \cdot \tilde{C}_{t_j(k)} = 
\begin{cases}
0 & \text{if } t_j(k) > t_p(q), \\
\left\langle {\omega}_{{t_p(q)}},\ 
s_{{t_p(q)}} \cdots s_{{t_j(k)+1}}(\alpha_{{t_j(k)}}^\vee) \right\rangle 
& \text{if } t_j(k) \leq t_p(q).
\end{cases}
\]
\end{proposition}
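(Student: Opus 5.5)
We will prove the formula by the projection formula, reducing it to a degree computation on a $T$-stable curve in $G/B$. Throughout we use the single index $a:=t_p(q)$, $b:=t_j(k)$ in $\{1,\dots,N\}$, $N=\ell(w)$, and write $\tilde{w}=(s_{i_1},\dots,s_{i_N})$. Recall from Section~\ref{sec:line} that
\[
\mathcal{L}_a=\widehat{f}_a^*\widehat{\pi}_a^*\mathcal{L}_{G/B}(\omega_{i_a}),\qquad \phi_a:=\widehat{\pi}_a\circ\widehat{f}_a\colon[p_1,\dots,p_N]\mapsto p_1\cdots p_aB .
\]
Hence $\mathcal{L}_a\cdot\tilde{C}_b=\deg\bigl(\mathcal{L}_{G/B}(\omega_{i_a})|_{\phi_a(\tilde{C}_b)}\bigr)$ when $\phi_a|_{\tilde{C}_b}$ is birational onto its image, and the intersection number is $0$ when $\tilde{C}_b$ is contracted.

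\textbf{Step 1: parametrize $\tilde{C}_b$.} Take $\tilde{x}=[\dot s_{i_1},\dots,\dot s_{i_N}]$ and set $v:=\dot s_{i_1}\cdots\dot s_{i_{b-1}}$. Then
\[
U_{\beta_b}=vU_{\alpha_{i_b}}v^{-1}.
\]
For $u=vu'v^{-1}$ with $u'\in U_{\alpha_{i_b}}\subset P_{\alpha_{i_b}}$, we can move $u'$ past the first $b-1$ factors using the relations defining the twisted product. This gives
\[
u\tilde{x}=[\dot s_{i_1},\dots,\dot s_{i_{b-1}},\,u'\dot s_{i_b},\,\dot s_{i_{b+1}},\dots,\dot s_{i_N}].
\]
So $\tilde{C}_b$ is obtained by varying only the $b$-th entry.

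\textbf{Step 2: the case $b>a$.} Then $\phi_a$ only sees the entries $1,\dots,a$, which are constant along $\tilde{C}_b$. So $\phi_a(\tilde{C}_b)$ is a point and the intersection number is $0$, as claimed.

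\textbf{Step 3: the case $b\le a$.} Using $u'\dot s_{i_b}=\dot s_{i_b}u''$ with $u''\in U_{-\alpha_{i_b}}$, and writing $z:=\dot s_{i_{b+1}}\cdots\dot s_{i_a}$, we get
\[
\phi_a(\tilde{C}_b)=v\dot s_{i_b}\,\overline{U_{-\alpha_{i_b}}zB}=v\dot s_{i_b}z\,\overline{U_{-\gamma}B},\qquad \gamma:=z^{-1}(\alpha_{i_b}).
\]
This is a $G$-translate of the $T$-stable curve $\overline{U_{-\gamma}B}\subset G/B$. That curve is an isomorphic image of $\mathbb{P}^1$, and by $\mathrm{SL}_2$-theory $\mathcal{L}_{G/B}(\lambda)$ has degree $|\langle\lambda,\gamma^\vee\rangle|$ on it. Since $\gamma^\vee=s_{i_a}\cdots s_{i_{b+1}}(\alpha_{i_b}^\vee)$ and translation does not change degrees, we obtain
\[
\mathcal{L}_a\cdot\tilde{C}_b=\bigl|\langle\omega_{i_a},\,s_{i_a}\cdots s_{i_{b+1}}(\alpha_{i_b}^\vee)\rangle\bigr| .
\]
If this pairing is $0$, the curve is contracted and both sides vanish. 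Otherwise $\phi_a|_{\tilde{C}_b}$ is an isomorphism onto its image, since it is $T$-equivariant and injective on the $U$-orbit, so no multiplicity appears.

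\textbf{Main obstacle: the sign.} It remains to show that the absolute value can be dropped, i.e.\ that the pairing with the prescribed orientation is already $\ge 0$. This is the step I expect to need the most care.
\begin{enumerate}
\item My first approach is direct: by the exchange property, $s_{i_{b+1}}\cdots s_{i_a}$ is a subword of the reduced word, so $z^{-1}(\alpha_{i_b})$ can be compared with the reduced decomposition.
\item Alternatively, a positivity argument suffices: $\mathcal{L}_a$ is globally generated, being a pullback of a line bundle for a dominant weight, so its degree on any curve is nonnegative.
\item With (2), the remaining task is only to check that $\langle\omega_{i_a},s_{i_a}\cdots s_{i_{b+1}}(\alpha_{i_b}^\vee)\rangle\ge 0$. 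Write it as $\langle s_{i_a}\omega_{i_a},\,s_{i_{a-1}}\cdots s_{i_{b+1}}(\alpha_{i_b}^\vee)\rangle$ and argue by descending induction on $a-b$.
\item Sanity check: for $a=b$ the formula gives $\langle\omega_{i_a},\alpha_{i_a}^\vee\rangle=1$, consistent with Proposition~\ref{prop:dual}.
\end{enumerate}
Finally, translating back via $a=t_p(q)$ and $b=t_j(k)$ gives the stated formula.
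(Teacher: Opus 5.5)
Your strategy is sound: push forward along $\phi_a$ and compute a degree on a $T$-stable curve in $G/B$. Steps 1--2 are fine. (The paper gives no proof here; it simply recalls the formula from \cite{BK21}.) The gap is in Step 3, and it is more than a cosmetic sign. The claim that $\overline{U_{-\gamma}B}/B$ is a copy of $\mathbb{P}^1$ on which $\mathcal{L}_{G/B}(\lambda)$ has degree $|\langle\lambda,\gamma^\vee\rangle|$ is false for an arbitrary root $\gamma$. If $\gamma$ were negative, then $U_{-\gamma}\subset B$, so $\overline{U_{-\gamma}B}/B$ would be the single point $eB$. Then $\phi_a$ would contract $\tilde C_b$, and the intersection number would be $0$, not $|\langle\omega_{i_a},\gamma^\vee\rangle|$. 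So the sign of $\gamma$ must be settled before the degree can be computed at all, and you leave exactly this open. Your item (2) does not settle it: global generation only gives that the degree is $\ge 0$, which your absolute value already asserts, and says nothing about the sign of the signed pairing. Item (3) is an induction you never carry out. Relatedly, the sentence ``if this pairing is $0$, the curve is contracted'' is wrong. Once $\gamma>0$, $\phi_a$ maps $\tilde C_b$ birationally onto a translate of $C_\gamma$ in every case; the vanishing comes from $\mathcal{L}_{G/B}(\omega_{i_a})$ being trivial on that curve.

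The missing step is one line. The word $s_{i_b}s_{i_{b+1}}\cdots s_{i_a}$ is a consecutive piece of the reduced word $\tilde w$, hence reduced, so $\ell(s_{i_b}z)=\ell(z)+1$. This is equivalent to $\gamma=z^{-1}(\alpha_{i_b})\in R^+$, which is precisely the second assertion of Lemma~\ref{+roots}. Then $U_{-\gamma}\cap B=\{1\}$, so $U_{-\gamma}$ maps isomorphically onto the open cell of the $T$-stable curve $C_\gamma=\overline{U_{-\gamma}B}/B\cong\mathbb{P}^1$ through $eB$ and $s_\gamma B$. Hence $\phi_a|_{\tilde C_b}$ is birational onto a translate of $C_\gamma$, and $\mathcal{L}_a\cdot\tilde C_b=\mathcal{L}_{G/B}(\omega_{i_a})\cdot C_\gamma=\langle\omega_{i_a},\gamma^\vee\rangle$. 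This is automatically $\ge 0$, being the coefficient of $\alpha_{i_a}^\vee$ in the positive coroot $\gamma^\vee=s_{i_a}\cdots s_{i_{b+1}}(\alpha_{i_b}^\vee)$, so no absolute values or induction are needed. The same reducedness (positivity of $s_{i_{c+1}}\cdots s_{i_{b-1}}(\alpha_{i_b})$ for $c<b$) is what justifies moving $u$ past the first $b-1$ factors in Step 1, which you also left implicit. Alternatively, use the $B$-equivariance of $\phi_a$ to write $\phi_a(u\tilde x)=u\,\dot s_{i_1}\cdots\dot s_{i_a}B$. Conjugating $U_{\beta_b}$ by $\dot s_{i_1}\cdots\dot s_{i_a}$ then gives $U_{-\gamma}$ for $b\le a$ and a subgroup of $U$ for $b>a$, handling Steps 2 and 3 at once.
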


Now, let $\mathcal{L}$ be a line bundle on $\widehat{X}(\widehat{w})$. By Theorem~\ref{thm:LT line bundles}, its pullback to $Z(\tilde{w})$ via $\tilde{\pi}$ can be expressed as:
\begin{equation}\label{Eq:pull}
\tilde{\pi}^*\mathcal{L} = \bigotimes_{j=1}^{m} \bigotimes_{k=1}^{r_j} \mathcal{L}_{t_j(k)}^{n_{t_j(k)}}  
\end{equation}

where $n_{t_j(k)}\in \mathbb{Z}$. Define, for each $j$ and $k$,
\begin{align}\label{Eq:ljk}
\ell_{k,j} :=\ n_{t_j(k)}~ +\ 
& \sum_{k< r\leq r_j} n_{t_j(r)}\left\langle 
{\omega}_{{t_j(r)}},\ 
s_{{t_j(r)}} \cdots s_{{t_j(k)+1}}(\alpha_{{t_j(k)}}^\vee) 
\right\rangle \nonumber \\
& + 
\sum_{j < p \leq m} \sum_{q=1}^{r_p} 
n_{t_p(q)} \left\langle 
{\omega}_{{t_p(q)}},\ 
s_{{t_p(q)}} \cdots s_{{t_j(k)+1}}(\alpha_{{t_j(k)}}^\vee) 
\right\rangle.
\end{align}

We now obtain the consequence:

\begin{proposition}\label{prop:+ve}
Let $\mathcal{L}$ be a very ample line bundle on $\widehat{X}(\widehat{w})$ whose pullback to $Z(\tilde{w})$ is given by Equation~\eqref{Eq:pull}. Then, for any $1 \leq k \leq r_j$ and $1 \leq j \leq m$, we have
\[
\mathcal{L} \cdot \widehat{C}_{k,j} = \ell_{k,j}>0.
\]
\end{proposition}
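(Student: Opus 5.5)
The equality $\mathcal{L}\cdot \widehat{C}_{k,j}=\ell_{k,j}$ is a direct computation, and positivity then comes from the ampleness of $\mathcal{L}$.

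First, by Lemma~\ref{lem:pullback}, $\mathcal{L}\cdot \widehat{C}_{k,j}=\tilde{\pi}^*\mathcal{L}\cdot \tilde{C}_{t_j(k)}$. By Equation~\eqref{Eq:pull}, $\tilde{\pi}^*\mathcal{L}$ is written in the Lauritzen--Thomsen basis (Theorem~\ref{thm:LT line bundles}). Bilinearity of the intersection pairing then gives
\[
\tilde{\pi}^*\mathcal{L}\cdot \tilde{C}_{t_j(k)}=\sum_{p=1}^{m}\sum_{q=1}^{r_p} n_{t_p(q)}\,\mathcal{L}_{t_p(q)}\cdot \tilde{C}_{t_j(k)} .
\]

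Next, apply Proposition~\ref{prop:intersection} to each term. Every term with $t_p(q)<t_j(k)$ vanishes. The indices $t_p(q)\ge t_j(k)$ split into three groups:
\begin{itemize}
\item[(i)] $p=j$, $q=k$: this gives $\langle \omega_{t_j(k)},\alpha^\vee_{t_j(k)}\rangle=1$, so it contributes $n_{t_j(k)}$.
\item[(ii)] $p=j$, $k<q\le r_j$: these give the first sum in~\eqref{Eq:ljk}.
\item[(iii)] $p>j$, all $q$: these give the double sum in~\eqref{Eq:ljk}.
\end{itemize}
The fundamental weight attached to index $t_p(q)$ is $\omega_{\alpha_{q,p}}$, so the pairing $\langle\omega_{\alpha_{t_p(q)}},\alpha^\vee_{t_p(q)}\rangle=1$ used in (i) is correct. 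Adding the three groups gives exactly $\ell_{k,j}$. The only care needed here is bookkeeping: the reindexing $t_j(k)$ is strictly increasing in the lexicographic order on $(j,k)$, so ``$t_p(q)\ge t_j(k)$'' is equivalent to ``$p>j$, or $p=j$ and $q\ge k$''.

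For positivity, $\widehat{C}_{k,j}$ is an irreducible complete curve in $\widehat{X}(\widehat{w})$: it is the closure of a $U_{\beta}$-orbit of dimension one, and it is isomorphic to $\tilde{C}_{t_j(k)}\cong\mathbb{P}^1$. Since $\mathcal{L}$ is very ample, its restriction to this curve is very ample, and hence has positive degree. Therefore $\ell_{k,j}=\mathcal{L}\cdot\widehat{C}_{k,j}>0$.

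The main point to check is that $\widehat{C}_{k,j}$ is genuinely a curve and not a point. This is guaranteed by Lemma~\ref{lem:curves} and the identification $\widehat{C}_{k,j}\cong\tilde{C}_{t_j(k)}$ preceding Proposition~\ref{prop:curve}. That identification is also what justifies the equality of degrees used in the first step.

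Note that $\tilde{\pi}^*\mathcal{L}$ is usually only nef on $Z(\tilde{w})$: by Proposition~\ref{pull back}, many of its coefficients $n_{t_j(k)}$ vanish. So one cannot argue positivity directly on $Z(\tilde{w})$ from the formula. Positivity has to come from ampleness on $\widehat{X}(\widehat{w})$ itself, as above.
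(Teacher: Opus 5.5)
Your proposal is correct and follows the same route as the paper. The equality comes from Lemma~\ref{lem:pullback} combined with Proposition~\ref{prop:intersection}, where you spell out the splitting of the indices $t_p(q)\ge t_j(k)$ that the paper leaves implicit, and positivity comes from the fact that the very ample line bundle $\mathcal{L}$ has positive degree on the curve $\widehat{C}_{k,j}$.
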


\begin{proof}
The first equality is obtained immediately from Lemma~\ref{lem:pullback} together with Proposition~\ref{prop:intersection}. Since $\mathcal{L}$ is very ample, we have $\mathcal{L} \cdot \widehat{C}_{k,j} > 0$ for all $j, k$, thus the proof follows. 
\end{proof}

\begin{remark}
   For a given very ample line bundle  $\mathcal{L}$ on $\widehat{X}(\widehat{w})$, then its pullback $\tilde{\pi}^*\mathcal{L}$ to $Z(\tilde{w})$ is globally generated, which is also nef line bundle. In general the pullback need not be a very ample line bundle, see Example \ref{eg1}.
\end{remark} 

\begin{proposition}
Let $\mathcal{L}$ be a very ample line bundle on $\widehat{X}(\widehat{w})$. Then
\begin{align*}
\min\{ \mathcal{L} \cdot C \mid C \text{ is a minimal curve in} ~ \widehat{X}(\widehat{w}) \}
= \min
\big\{ \ell_{i,j} : ~1 \leq i \leq r_j ~\text{and}~ 1 \leq j \leq m~~ \big\}.
\end{align*}
\end{proposition}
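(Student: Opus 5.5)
We prove the two inequalities separately, using the $T$-stable curves $\widehat{C}_{k,j}$ through $\widehat{x}$ from Proposition~\ref{prop:curve} together with their degrees $\ell_{k,j}$ from Proposition~\ref{prop:+ve}.

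\emph{Step 1: the inequality ``$\geq$''.} We invoke the Brion--Kannan analysis of minimal rational curves \cite{BK21}. Let $\mathcal{K}$ be a minimal family, and let $C\in\mathcal{K}_{\widehat{x}}$ be a member through the base point. Because $\mathcal{K}$ is covering and $B\widehat{x}$ is open, we may indeed assume that $C$ passes through $\widehat{x}$. The variety $\mathcal{K}_{\widehat{x}}$ is projective and is acted on by the stabilizer of $\widehat{x}$, which contains $T$. By Borel's fixed point theorem, the $T$-fixed locus of $\mathcal{K}_{\widehat{x}}$ is nonempty, so $\mathcal{K}_{\widehat{x}}$ contains a $T$-stable curve through $\widehat{x}$. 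By Proposition~\ref{prop:curve}, this curve is some $\widehat{C}_{k,j}$. Degree is constant on the family $\mathcal{K}$, hence $\mathcal{L}\cdot C=\ell_{k,j}$ for some $(k,j)$. Taking the minimum over all minimal families gives ``$\geq$''.

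\emph{Step 2: the inequality ``$\leq$''.} Fix $(i,j)$ attaining $\min\ell_{i,j}$. By Lemma~\ref{lem:free}, the curve $\widehat{C}_{i,j}$ is free, so it lies in a covering family $\mathcal{K}'$ of $\mathcal{L}$-degree $\ell_{i,j}$. By \cite[Theorem~IV.2.10]{Kol96}, together with Lemma~\ref{lem:mindeg}, the minimum of $\mathcal{L}$-degrees over minimal families is at most the degree of any covering family. Concretely, a minimal covering family with respect to $\mathcal{L}$ is a minimal family, since bend-and-break shows that $\mathcal{K}_x$ is projective when the degree is minimal among covering families. Its degree is $\le \ell_{i,j}$. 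Combined with Step~1, this degree equals some $\ell_{k',j'}\ge\min\ell$, so equality holds.

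\emph{Main obstacle.} Step~1 needs care: the $T$-fixed point argument requires $\mathcal{K}_{\widehat{x}}$ to be projective at the particular point $\widehat{x}$, not merely at a general point. The first remedy to try is the $B$-action. The group $B$ acts on $\widehat{X}(\widehat{w})$ and preserves each component of $\mathrm{RatCurves}$, and general points can be taken in the dense orbit $B\widehat{x}$. Then $\mathcal{K}_x\cong\mathcal{K}_{\widehat{x}}$ for all $x\in B\widehat{x}$, so projectivity at a general point transfers to $\widehat{x}$.

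A second, subtler point is that $\mathcal{K}_{\widehat{x}}$ consists of curves with possibly non-reduced or non-irreducible limits. Projectivity of $\mathcal{K}_{\widehat{x}}$ ensures that the $T$-fixed point is an actual irreducible rational curve in the family, and hence one of the $\widehat{C}_{k,j}$. This is exactly the argument of \cite{BK21} for Bott--Samelson varieties, transported via the bijection of Lemma~\ref{lem:curves} and the degree identity of Lemma~\ref{lem:pullback}.
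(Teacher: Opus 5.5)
Your proposal is correct and follows essentially the same route as the paper. For the lower bound, every minimal family contains a $T$-stable curve through $\widehat{x}$, hence some $\widehat{C}_{k,j}$; the paper simply cites \cite{BK21} for this, whereas you spell out the Borel fixed point argument and the $B$-homogeneity transfer of projectivity of $\mathcal{K}_x$ to $\widehat{x}$. For the upper bound, both arguments use freeness of the $\widehat{C}_{i,j}$ (Lemma~\ref{lem:free}) together with the fact that a minimal-degree covering family is a minimal family (Lemma~\ref{lem:mindeg}).
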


\begin{proof}
First, note that the variety $ \widehat{X}(\widehat{w}) $ is uniruled. Since the degrees of all curves in the same minimal family with respect to a given polarization are equal, we have:
{\small
\begin{align*}
\min\{ \mathcal{L} \cdot C \mid C \text{ is a minimal curve in} ~ \widehat{X}(\widehat{w})\}
&= \min \{ \deg_{\mathcal{L}}(\mathcal{K}) \mid \mathcal{K} \text{ is a minimal family on } \widehat{X}(\widehat{w}) \} \\
&= \deg_{\mathcal{L}}(\mathcal{K}'),
\end{align*}
}
where $ \mathcal{K}' $ is a minimal-degree covering family with respect to $ \mathcal{L} $ thanks to Lemma~\ref{lem:mindeg}.
Note that every minimal family on $ \widehat{X}(\widehat{w}) $ contains a $ T $-stable curve (see \cite[Section 2.3 and Proposition 4.7 (2)]{BK21}). Then it follows from Lemma~\ref{lem:free} and the definition of minimal degree that:
\begin{align*}
\deg_{\mathcal{L}}(\mathcal{K}') &= \min\{ \mathcal{L} \cdot \widehat{C}_{i,j} \mid ~1 \leq i \leq r_j~\text{and}~\ 1 \leq j \leq m \} \\
&= \min
\big\{ \ell_{i,j} : ~1 \leq i \leq r_j ~\text{and}~ 1 \leq j \leq m~~ \big\} \quad (\text{By Proposition \ref{prop:+ve}}).
\end{align*}
Hence, the result follows.
\end{proof}

In the context of projective K\"ahler manifolds which are \emph{uniruled}, i.e., the property that the manifold is swept out by rational curves, the following result provides a useful estimate for the upper bound for the Gromov width in terms of the symplectic area of any minimal curve. See \cite[Theorem 1.1]{BCF24b}.

\begin{theorem}
Let $X$ be a projective complex manifold equipped with a K\"ahler form $\omega$. Then for any minimal curve $C$ in $X$, the Gromov width satisfies
\[
w_G(X, \omega) \leq \int_C \omega.
\]
\end{theorem}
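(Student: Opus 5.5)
The plan is the standard pseudoholomorphic-curve argument behind Gromov's non-squeezing theorem, applied to the homology class of a minimal curve. Fix a minimal family $\mathcal{K}$ containing $C$, set $A := [C] \in H_2(X;\mathbb{Z})$, and take a symplectic embedding $\psi \colon (B^{2n}(r), \omega_{st}) \hookrightarrow (X,\omega)$. I will choose an $\omega$-tame almost complex structure $J$ that agrees with $\psi_* J_0$ on $\psi(B^{2n}(r'))$ for $r'<r$, where $J_0$ is the standard complex structure. The key input, which I have to establish, is:
\begin{enumerate}
\item[(a)] for \emph{every} $\omega$-tame $J$ and every point $p \in X$, there is a nonconstant $J$-holomorphic stable map of genus zero through $p$ whose total class is $A$.
\end{enumerate}
Granting (a), apply it with $p=\psi(0)$. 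Some component $u$ of the stable map passes through $p$, and its energy satisfies $\int u^*\omega \le \omega(A) = \int_C \omega$. The monotonicity lemma for $J_0$-holomorphic curves in the standard ball then gives $\pi r'^2 \le \mathrm{area}\bigl(u \cap \psi(B(r'))\bigr) \le \int_C\omega$. Letting $r' \to r$ and taking the supremum over embeddings gives $w_G(X,\omega) \le \int_C \omega$.

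To prove (a), I would first work with the integrable structure $J_X$ and a very general point $x$, using the algebraic facts recalled above. By Theorem~\ref{thm:free}, every rational curve through $x$ is free, so the evaluation map is submersive there and the moduli space is smooth of the expected dimension. Because $\mathcal{K}$ is a minimal family, $\mathcal{K}_x$ is projective, so the space of curves of class $A$ through $x$ coming from $\mathcal{K}$ is compact. In particular, no reducible degenerations occur at $x$: such a degeneration would give a covering family of smaller degree, contradicting minimality.

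I would then cut $\mathcal{K}_x$ down to finitely many points. To do this I impose extra incidence conditions at further marked points with generic cycles, for instance complete intersections of hyperplane sections for an ample $\mathcal{L}$, chosen to match $\dim \mathcal{K}_x$. This defines a genus-zero Gromov--Witten invariant $\langle [\mathrm{pt}], \gamma_1,\dots,\gamma_k\rangle_{0,A}$. Since all contributing curves are free and $J_X$ is integrable, the curves count positively, so the invariant is a nonzero positive count. Deformation invariance of Gromov--Witten invariants then produces, for any tame $J$ and any $p$, a $J$-stable map in class $A$ through $p$: a nonzero invariant cannot be realized by the empty moduli space. This proves (a).

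The main obstacle is making this invariant rigorous for a general Kähler manifold, which need not be semipositive. I would first try to use the minimality and freeness of the curves through a very general point to show the relevant moduli space is already regular and compact, so that only a pseudocycle argument is needed. If multiply covered or bubbled configurations still obstruct this, I would fall back on virtual fundamental class techniques. Deformation invariance of the virtual count then supplies the stable map required in (a), and the monotonicity step goes through unchanged.
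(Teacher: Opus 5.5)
The paper gives no proof of this statement; it quotes it from \cite[Theorem~1.1]{BCF24b}. Your architecture is the standard one: a nonvanishing genus-zero Gromov--Witten invariant with a point insertion, then monotonicity in a ball on which $J$ is standard, with virtual techniques in the non-semipositive case. Everything after the nonvanishing of the invariant is fine.

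\textbf{The gap is in step (a).} You claim that $\langle[\mathrm{pt}],\gamma_1,\dots,\gamma_k\rangle_{0,A}$ is a positive count of members of $\mathcal{K}$, where $A=[C]$ is the class of an \emph{arbitrary} minimal family. Projectivity of $\mathcal{K}_x$ only controls limits of members of $\mathcal{K}$. The invariant, however, sees every genus-zero stable map of class $A$ through $x$.

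Suppose another covering family has class $A_1$ with $A-A_1$ effective. Then the moduli space contains maps $C_1\cup D$, where $C_1$ is a member of that family through $x$ and $D$ is any (possibly multiply covered) rational tree of class $A-A_1$ meeting $C_1$. The curve $C_1$ is free, but $D$ need not be. Its deformations can have excess dimension, so such strata can meet your incidence cycles and contribute virtually with no sign control.

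Your sentence ``such a degeneration would give a covering family of smaller degree, contradicting minimality'' treats $\mathcal{K}$ as a minimal-degree covering family. A minimal family in the sense used here (projective $\mathcal{K}_x$) need not minimize degree or area. For example, on $\mathbb{P}^1\times\mathbb{P}^1$ with a non-monotone form, both rulings are minimal families but only one has minimal area. The paper itself keeps the two notions apart (cf.\ Lemma~\ref{lem:mindeg}).

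\textbf{The repair is to prove (a) for a different class.}
\begin{enumerate}
\item For an ample $H$ and small $\epsilon>0$, the class $[\omega]-\epsilon c_1(H)$ is still K\"ahler. So covering families of area at most $\int_C\omega$ have bounded $H$-degree and lie in finitely many components. Hence there is a covering family of minimal $\omega$-area; call its class $A_0$.
\item For very general $x$, every rational curve through $x$ belongs to a covering family, so it has area at least $\omega(A_0)$.
\item Therefore every stable map of class $A_0$ through $x$ has exactly one nonconstant component. That component is birational onto its image, since a $k$-fold cover would give a rational curve through $x$ of area $\omega(A_0)/k$. It is free by Theorem~\ref{thm:free}.
\item Your count now goes through verbatim and gives a positive invariant in class $A_0$; this is Koll\'ar's nonvanishing theorem for minimal-degree classes.
\end{enumerate}
Thus $w_G(X,\omega)\le\omega(A_0)\le\int_C\omega$, where the last inequality holds because $\mathcal{K}$ is covering.

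This is also all the paper actually uses. It applies the bound only to the minimum over minimal curves, and Lemma~\ref{lem:mindeg} shows that minimum is realized by a minimal-degree family.
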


The existence of such minimal curves is guaranteed on uniruled projective manifolds, and the above inequality can thus be effectively applied in our setting.

\begin{corollary}\label{cor:upperbound}
    Let $\mathcal{L}$ be a very ample line bundle on $\widehat{X}(\widehat{w})$. Then 
\begin{align*}
w_G(\widehat{X}(\widehat{w}), \omega_{\mathcal{L}}) 
&\leq \min\{ \mathcal{L} \cdot C \mid C \text{ is a minimal curve in} ~ \widehat{X}(\widehat{w}) \} \\
&= \min
\big\{ \ell_{i,j} : ~1 \leq i \leq r_j ~\text{and}~ 1 \leq j \leq m~~ \big\}.
\end{align*}
\end{corollary}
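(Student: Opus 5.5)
This is a direct combination of two results already established. The plan is to first bound the Gromov width by the symplectic area of an arbitrary minimal curve, then rewrite the minimum over minimal curves using the preceding proposition.

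\emph{Step 1 (the inequality).} By Corollary~\ref{cor:ample}, the very ample $\mathcal{L}$ embeds $\widehat{X}(\widehat{w})$ into projective space. Let $\omega_{\mathcal{L}}$ be the restriction of the Fubini--Study form, normalized so that the class of $\omega_{\mathcal{L}}$ is $c_1(\mathcal{L})$. Then $\omega_{\mathcal{L}}$ is a K\"ahler form on the smooth projective manifold $\widehat{X}(\widehat{w})$. The variety is uniruled: it is birational to $Z(\tilde{w})$ by Corollary~\ref{cor:birational}, and $Z(\tilde{w})$ is an iterated $\mathbb{P}^1$-bundle, hence rational. So minimal curves exist. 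For any minimal curve $C$, the theorem of \cite[Theorem 1.1]{BCF24b} recalled above gives $w_G(\widehat{X}(\widehat{w}),\omega_{\mathcal{L}})\le \int_C\omega_{\mathcal{L}}$.

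\emph{Step 2 (areas as intersection numbers).} Since $[\omega_{\mathcal{L}}]=c_1(\mathcal{L})$, we have
\[
\int_C\omega_{\mathcal{L}}=\langle c_1(\mathcal{L}),[C]\rangle=\mathcal{L}\cdot C.
\]
If $C$ is not smooth, integrate over its normalization $\mathbb{P}^1\to C$, which is birational onto the image, so the degree is unchanged. Taking the infimum over all minimal curves gives the inequality in the statement.

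\emph{Step 3 (the equality).} The equality
\[
\min\{\mathcal{L}\cdot C\mid C \text{ minimal}\}=\min\{\ell_{i,j}\}
\]
is exactly the preceding proposition. Its proof uses three inputs: the minimum over minimal families is attained by a minimal-degree covering family (Lemma~\ref{lem:mindeg}); each minimal family contains a $T$-stable curve \cite{BK21}, which can be taken through $\widehat{x}$ and is therefore one of the $\widehat{C}_{k,j}$ by Proposition~\ref{prop:curve}; and the degrees of these curves are $\mathcal{L}\cdot\widehat{C}_{k,j}=\ell_{k,j}$ by Proposition~\ref{prop:+ve}. The minimum is attained because the set of degrees consists of positive integers.

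The only step that needs care is the normalization in Step 2, namely that $\omega_{\mathcal{L}}$ represents $c_1(\mathcal{L})$ rather than a multiple of it such as $2\pi c_1(\mathcal{L})$. I would fix the convention for $\omega_{\mathcal{L}}$ so that a line in projective space has area $1$, which is consistent with the Seshadri comparison \cite{BC01}. Once that is fixed, the rest is a formal concatenation of earlier results.
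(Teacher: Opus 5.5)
Your proposal is correct and follows the paper's route: the corollary is obtained by applying the minimal-curve bound of \cite[Theorem 1.1]{BCF24b} with $\int_C\omega_{\mathcal{L}}=\mathcal{L}\cdot C$, and then quoting the preceding proposition for the equality with $\min\{\ell_{i,j}\}$. Your additional justifications (uniruledness via the rationality of $Z(\tilde{w})$, and the normalization $[\omega_{\mathcal{L}}]=c_1(\mathcal{L})$) only make explicit points the paper leaves implicit.
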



\section{Lower bound for the Gromov width of \texorpdfstring{$\widehat{X}(\widehat{w})$}{X-hat(w-hat)}}\label{sec:LB}

\subsection{Newton-Okounkov bodies}
In this subsection, we study Newton-Okounkov bodies for generalized Bott-Samelson varieties. For the basics on Newton-Okounkov bodies, we refer the reader to \cite{KK12a}, \cite{KK12b} and \cite{HK15}.

Let $X$ be a projective variety of dimension $d$ over $\mathbb{C}$ equipped with a nef line bundle $\mathcal{L}$. Fix a valuation $v$ on $\mathbb{C}(X)$ with one-dimensional leaves, and a nonzero section $\tau \in H^0(X, \mathcal{L})$. Then $H^0(X, \mathcal{L}^{\otimes k})$ embeds into $\mathbb{C}(X)$ via
\[
\sigma \mapsto \sigma/\tau^k, \quad \text{for all } k \geq 1.
\]
Define the semigroup $S \subset \mathbb{N} \times \mathbb{Z}^d$ by
\[
S = S(v, \tau) = \bigcup_{k > 0} \left\{ (k, v(\sigma/\tau^k)) \mid \sigma \in H^0(X, \mathcal{L}^{\otimes k}) \setminus \{0\} \right\}.
\]
Let $C = C(v, \tau) \subset \mathbb{R}_{\geq 0} \times \mathbb{R}^d$ be the closed convex cone generated by $S$. The set\[
\Delta(X, \mathcal{L}, v, \tau) := \left\{ x \in \mathbb{R}^d \,\middle|\, (1, x) \in C \right\}
\]
is the \textit{Newton-Okounkov body} associated to $(X, \mathcal{L}, v, \tau)$.

\subsection{Lower bound}

We now recall the definitions of the roots $\beta_{t_j(k)}$ (from Equation \ref{eq:beta}) and the corresponding root subgroups $U_{\beta_{t_j(k)}}$ of $G$. Define
\[
U_j := \prod_{k=1}^{r_j} U_{\beta_{t_j(k)}}, \quad \text{for } 1 \leq j \leq m,
\]
so that $U_1 \times \cdots \times U_m$ forms an affine open subset of $Z(\tilde{w})$ around the base point $\tilde{x}$ via the isomorphisms
\[
U_1 \times \cdots \times U_m \simeq Bwx \simeq B\tilde{x},
\]
where $x$ is a base point of $G_{w}/(P^w \cap G_w)$. The first isomorphism is well known; see \cite[II.14.5(a)]{jantzen2003} for details. The second follows from the morphism $\pi$ (see Equation \ref{BS resoltion}) which maps the $B$-orbit $B\tilde{x}$ in $Z(\tilde{w})$ isomorphically to the $B$-orbit $Bwx$ in $X(w)$.

We identify the function field $\mathbb{C}(Z(\tilde{w})) = \mathbb{C}(U_1 \times \cdots \times U_m)$ with the rational function field
\[
\mathbb{C}(x_{t_1(1)}, \dots, x_{t_1(r_1)}, \dots, x_{t_m(1)}, \dots, x_{t_m(r_m)})
\]
via the isomorphism
\[
(x_{t_1(1)}, \dots, x_{t_m(r_m)}) \mapsto 
(\exp(x_{t_1(1)} E_{\beta_{t_1(1)}}), \dots, \exp(x_{t_m(r_m)} E_{\beta_{t_m(r_m)}})),
\]
where $E_{\beta_{t_j(k)}}$ denotes the root operator corresponding to the root $\beta_{t_j(k)}$.

Given a function
\[
f = \sum a_\mathbf{q}\, \mathbf{x}^{\mathbf{q}} \in \mathbb{C}[x_{t_1(1)}, \dots, x_{t_m(r_m)}],
\]
let $(k_{t_1(1)}, \dots, k_{t_m(r_m)})$ be the maximal tuple (with respect to a fixed total ordering) among those with $a_{\mathbf{q}} \neq 0$. Define
\[
v_\beta(f) := - (k_{t_1(1)}, \dots, k_{t_m(r_m)}).
\]
This defines a valuation map
\[
v_\beta: \mathbb{C}(x_{t_1(1)}, \dots, x_{t_m(r_m)}) \setminus \{0\} \to \mathbb{Z}^{r_1+\cdots +r_m}
\]
by
\[
v_\beta\left(\frac{f}{g}\right) := v_\beta(f) - v_\beta(g), \quad \text{for } f, g \in \mathbb{C}[x_{t_1(1)}, \dots, x_{t_m(r_m)}] \setminus \{0\}.
\]
Note that $v_\beta$ is a valuation with one-dimensional leaves.

For a  given very ample line bundle $\mathcal{L}$ on $\widehat{X}(\widehat{w})$, the pullback $\tilde{\pi}^* \mathcal{L}$ is a nef line bundle on  $Z(\tilde{w})$. 
Fix $\tau \in H^0(\widehat{X}(\widehat{w}), \mathcal{L}) \setminus \{0\}$, so $\tilde{\pi}^* \tau$ is a nonzero section in $H^0(Z(\tilde{w}), \tilde{\pi}^* \mathcal{L})$. Then

\begin{lemma}\label{lem:NObody} We have the equality:
\[
\Delta(\widehat{X}(\widehat{w}), \mathcal{L}, v_\beta, \tau)
=
\Delta(Z(\tilde{w}), \tilde{\pi}^* \mathcal{L}, v_\beta, \tilde{\pi}^* \tau).
\]
\end{lemma}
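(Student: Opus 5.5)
The plan is to compare the two semigroups $S(v_\beta,\tau)$ and $S(v_\beta,\tilde{\pi}^*\tau)$ directly and show they coincide. The Newton-Okounkov body depends only on the closed convex cone generated by the semigroup, so equal semigroups give equal bodies. Three inputs are needed.

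\textbf{The valuations agree.} By Corollary~\ref{cor:birational}, $\tilde{\pi}$ restricts to an isomorphism $B\tilde{x}\simeq B\widehat{x}$ of open dense subsets. Hence $\tilde{\pi}^*:\mathbb{C}(\widehat{X}(\widehat{w}))\to \mathbb{C}(Z(\tilde{w}))$ is an isomorphism of function fields. Both fields are identified with $\mathbb{C}(x_{t_1(1)},\dots,x_{t_m(r_m)})$ through the same chart $U_1\times\cdots\times U_m\simeq B\tilde{x}\simeq B\widehat{x}$. So the valuation $v_\beta$ on $\widehat{X}(\widehat{w})$ is, by definition, the pullback of $v_\beta$ on $Z(\tilde{w})$ under $\tilde{\pi}^*$. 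In other words, $v_\beta(\tilde{\pi}^*f)=v_\beta(f)$ for every rational function $f$.

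\textbf{The section spaces agree.} I would show that $\tilde{\pi}^*:H^0(\widehat{X}(\widehat{w}),\mathcal{L}^{\otimes k})\to H^0(Z(\tilde{w}),\tilde{\pi}^*\mathcal{L}^{\otimes k})$ is an isomorphism for every $k\ge 1$.
\begin{itemize}
\item Injectivity holds because $\tilde{\pi}$ is birational and surjective.
\item For surjectivity, use the projection formula $\tilde{\pi}_*\tilde{\pi}^*\mathcal{L}^{\otimes k}=\mathcal{L}^{\otimes k}\otimes\tilde{\pi}_*\mathcal{O}_{Z(\tilde{w})}$. It then suffices to know $\tilde{\pi}_*\mathcal{O}_{Z(\tilde{w})}=\mathcal{O}_{\widehat{X}(\widehat{w})}$.
\item That equality follows from Zariski's main theorem: $\tilde{\pi}$ is a proper birational morphism onto $\widehat{X}(\widehat{w})$, which is smooth, hence normal, in our standing assumption. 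So $\tilde{\pi}$ has connected fibres and $\tilde{\pi}_*\mathcal{O}=\mathcal{O}$.
\end{itemize}
If one prefers to avoid normality, an alternative route is the factorisation of $\tilde{\pi}$ as a tower of fibrations. Over each factor $\overline{(P_{w_j}\cap G_{w_j})w_j(P^{w_j}\cap G_{w_j})}$ this is the Bott-Samelson resolution of the Schubert variety $X(w_j)$. Schubert varieties are normal, which gives $\tilde{\pi}_*\mathcal{O}=\mathcal{O}$ fibrewise.

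\textbf{Conclusion.} For $\sigma\in H^0(\widehat{X}(\widehat{w}),\mathcal{L}^{\otimes k})\setminus\{0\}$ we have
\[
v_\beta\bigl(\tilde{\pi}^*\sigma/(\tilde{\pi}^*\tau)^k\bigr)=v_\beta\bigl(\tilde{\pi}^*(\sigma/\tau^k)\bigr)=v_\beta(\sigma/\tau^k).
\]
Since every nonzero section of $\tilde{\pi}^*\mathcal{L}^{\otimes k}$ is of the form $\tilde{\pi}^*\sigma$, the two semigroups $S(v_\beta,\tau)$ and $S(v_\beta,\tilde{\pi}^*\tau)$ are equal as subsets of $\mathbb{N}\times\mathbb{Z}^{\ell(w)}$. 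They therefore generate the same cone, and the two Newton-Okounkov bodies coincide.

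The main obstacle is the surjectivity on global sections, that is, $\tilde{\pi}_*\mathcal{O}_{Z(\tilde{w})}=\mathcal{O}_{\widehat{X}(\widehat{w})}$. This needs normality of the target together with properness, birationality and surjectivity of $\tilde{\pi}$, which must be checked with some care. Matching the valuations is essentially bookkeeping about using the same chart.
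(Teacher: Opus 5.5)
Your proposal is correct and follows the same route as the paper. The paper's proof simply invokes the birationality of $\tilde{\pi}$ (so $v_\beta$ is the same valuation on the common function field) and the isomorphism $H^0(Z(\tilde{w}),\tilde{\pi}^*\mathcal{L})\simeq H^0(\widehat{X}(\widehat{w}),\mathcal{L})$, which together make the two semigroups coincide. You additionally justify that isomorphism for every power $\mathcal{L}^{\otimes k}$, using the projection formula and $\tilde{\pi}_*\mathcal{O}_{Z(\tilde{w})}=\mathcal{O}_{\widehat{X}(\widehat{w})}$ (since $\tilde{\pi}$ is proper, birational and surjective onto a normal variety); the paper leaves this implicit, and your main line suffices, so the fibrewise ``alternative route'' is not needed.
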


\begin{proof}
Since $\widehat{X}(\widehat{w})$ and $Z(\tilde{w})$ are birational and
\[
H^0(Z({\tilde{w}}), \tilde{\pi}^* \mathcal{L}) \simeq H^0(\widehat{X}(\widehat{w}), \mathcal{L}),
\]
the result follows.
\end{proof}


We now consider the Newton-Okounkov body of $Z(\tilde{w})$ associated with a specific section introduced below. Let $\lambda \in X(T)$ be a dominant weight. Denote by $V(\lambda)$ the simple $G$-module with highest weight $\lambda$, and let $v_\lambda \in V(\lambda)$ be a highest weight vector.
We consider the morphism induced by the very ample line bundle $\mathcal L$.
\[
\mathcal{L}_{\mathbf{i},\mathbf{n}} := \tilde{\pi}^*\mathcal{L},
\quad \text{where } \mathbf{n} = (n_{t_1(1)}, \dots, n_{t_m(r_m)}) \in \mathbb{Z}^{r_1+\cdots +r_m}_{\geq 0},
\]
given by
\[
\Psi_{\mathbf{i}, \mathbf{n}}: Z(\tilde{w}) \to 
\mathbb{P}\left(\bigotimes_{j=1}^{m} \bigotimes_{k=1}^{r_j} 
V(n_{t_j(k)}{\omega}_{{t_j(k)}})\right),
\]
\[
[p_{t_1(1)}, \dots, p_{t_m(r_m)}] \mapsto 
\left[p_{{t_1}(1)}v_{n_{t_1(1)}{\omega}_{{t_1(1)}}},\dots, \prod_{j=1}^{m} \prod_{k=1}^{r_j} p_{t_j(k)} 
v_{n_{t_m(r_m)} {\omega}_{{t_m(r_m)}}} \right].
\]

At the base point $\tilde{x}$, we have $\Psi_{\mathbf{i},\mathbf{n}}(\tilde{x}) = [v_0]$, where
\[
v_0 := 
s_{{t_1(1)}} v_{n_{t_1(1)} {\omega}_{{t_1(1)}}} 
\otimes s_{{t_1(1)}} s_{{t_1(2)}} 
v_{n_{t_1(2)} {\omega}_{{t_1(2)}}} 
\otimes \dots \otimes w 
v_{n_{t_m(r_m)} {\omega}_{{t_m(r_m)}}}.
\]
This morphism induces an isomorphism of $P_{{t_1(1)}}$-modules:
\[
\Phi_{\mathbf{i},\mathbf{n}}: V^*_{\mathbf{i},\mathbf{n}} 
\to H^0(Z(\tilde{w}), \mathcal{L}_{\mathbf{i},\mathbf{n}}),
\]
where $V_{\mathbf{i},\mathbf{n}}$ is the generalized Demazure module.
Let $f_0 \in V^*_{\mathbf{i},\mathbf{n}}$ be the dual of $v_0$, and define $\phi_0 := \Phi_{\mathbf{i},\mathbf{n}}(f_0)$. Then $\phi_0$ is a nonzero section of $H^0(Z(\tilde{w}), \mathcal{L}_{\mathbf{i},\mathbf{n}})$; see \cite[Lemma~4.1]{BCF24a}.

Then, we have the following theorem from \cite[Corollary 6.3]{Fujita}.
\begin{theorem}\label{thm:Polytope}
 The Newton-Okounkov body $\Delta(Z(\tilde{w}), \tilde{\pi}^*\mathcal{L}, v_\beta, \phi_0)$ is a rational convex polytope.
\end{theorem}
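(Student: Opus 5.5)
This is a result cited from \cite[Corollary 6.3]{Fujita}, so the plan is to reduce our situation to the setting treated there rather than to reprove polyhedrality from scratch. The Newton-Okounkov body in question lives on the Bott-Samelson variety $Z(\tilde{w})$. The line bundle is $\mathcal{L}_{\mathbf{i},\mathbf{n}}=\tilde{\pi}^*\mathcal{L}$, with $\mathbf{n}\in\mathbb{Z}_{\ge 0}^{r_1+\cdots+r_m}$; it is nef and globally generated, but in general not ample. The valuation is $v_\beta$, the highest-term valuation in the root-subgroup coordinates on $U_1\times\cdots\times U_m\simeq B\tilde{x}$. Fujita works with exactly this class of data: Bott-Samelson varieties $Z(\tilde{w})$ for a reduced word $\mathbf{i}$, line bundles $\mathcal{L}_{\mathbf{i},\mathbf{n}}$ with $\mathbf{n}$ allowed to have zero entries, valuations of highest-term type in the coordinates $\exp(x_{t}E_{\beta_t})$, and the distinguished section coming from the lowest extremal vector $v_0$. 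So the first step is to match conventions. This means checking that our $\beta_{t_j(k)}$ agree with his, that our total order on monomials is his (or its reverse, which only changes the body by a linear isomorphism), and that $\phi_0=\Phi_{\mathbf{i},\mathbf{n}}(f_0)$ is the section he uses.

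Second, I would recall the structure of his argument, so it is clear that no ampleness is used. He identifies $H^0(Z(\tilde{w}),\mathcal{L}_{\mathbf{i},\mathbf{n}}^{\otimes k})\cong V^*_{\mathbf{i},k\mathbf{n}}$, the dual of a generalized Demazure module. He then shows that $v_\beta(\sigma/\phi_0^k)$, as $\sigma$ runs over these sections, is computed by the Kashiwara crystal data of the generalized Demazure crystal, i.e.\ by generalized string parametrizations. From this the semigroup $S(v_\beta,\phi_0)$ is the set of lattice points $(k,\mathbf{a})$ satisfying finitely many linear inequalities: the generalized string cone intersected with $\mathbf{n}$-dependent inequalities. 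Hence $S$ is finitely generated and saturated, and the slice $\Delta$ at $k=1$ is a rational convex polytope.

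The main obstacle is the non-ample case $n_{t}=0$ for some $t$, which is exactly the situation here (Example~\ref{eg1}). When $n_t=0$ the corresponding tensor factor is trivial, so $\Psi_{\mathbf{i},\mathbf{n}}$ is not an embedding. The concern is whether the crystal description and the identification with $V^*_{\mathbf{i},\mathbf{n}}$ survive. I would check this first: the isomorphism $\Phi_{\mathbf{i},\mathbf{n}}$ and the crystal basis argument depend only on the tensor-product structure and on global generation, and a factor with $n_t=0$ just contributes a trivial module, so the inequalities from Fujita remain valid with $n_t=0$ substituted. If there is any doubt, the fallback is continuity of Newton-Okounkov bodies. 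One writes $\mathbf{n}$ as a limit of strictly positive rational vectors $\mathbf{n}+\varepsilon\mathbf{1}$, for which the bodies are polytopes cut out by inequalities depending linearly on the parameter. Since the bodies vary continuously in the ample cone and extend to its boundary (nef classes), the limiting body is a polytope defined by the same finite system at $\varepsilon=0$. It is still rational because the coefficients are integers.
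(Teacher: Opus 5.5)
Your proposal is correct and matches the paper, which proves this statement simply by invoking \cite[Corollary 6.3]{Fujita}. The appendix restates that result for arbitrary $\mathbf{n}\in\mathbb{Z}^m_{\geq 0}$, so the nef but non-ample case with some $n_t=0$ is covered directly, exactly as in your main line. Your continuity fallback is therefore unnecessary. It also only works because it presupposes the same uniform finite system of linear inequalities that Fujita's theorem already supplies.
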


Let $\tau$ be the section in $H^0(\widehat{X}(\widehat{w}), \mathcal{L})$ corresponding to $\phi_0$. Recall the numbers $\ell_{k, j}$ from Equation~\ref{Eq:ljk}, and set
\[
\kappa := \min\left\{ \ell_{k, j} \mid 1 \leq k \leq r_j,\, 1 \leq j \leq m \right\}.
\]

\begin{theorem}\label{thm:simplex} We have 
\begin{enumerate}
    \item The Newton-Okounkov body $\Delta(\widehat{X}(\widehat{w}), \mathcal{L}, v_\beta, \tau)$ is a rational convex polytope.
    \item This polytope contains a simplex of size $\kappa$.
\end{enumerate}
\end{theorem}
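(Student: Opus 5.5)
Part (1) follows directly from results already in hand. By Lemma~\ref{lem:NObody}, with $\tau$ the section of $\mathcal{L}$ corresponding to $\phi_0$, so that $\tilde{\pi}^*\tau=\phi_0$, we get
\[
\Delta(\widehat{X}(\widehat{w}),\mathcal{L},v_\beta,\tau)=\Delta(Z(\tilde{w}),\tilde{\pi}^*\mathcal{L},v_\beta,\phi_0).
\]
Theorem~\ref{thm:Polytope} says the right-hand side is a rational convex polytope. The only point to check is that the identification $H^0(Z(\tilde{w}),\tilde{\pi}^*\mathcal{L}^{\otimes k})\simeq H^0(\widehat{X}(\widehat{w}),\mathcal{L}^{\otimes k})$ holds for every $k$, and not only for $k=1$. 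This follows because $\tilde{\pi}$ is birational with $\widehat{X}(\widehat{w})$ smooth, hence normal, so $\tilde{\pi}_*\mathcal{O}_{Z(\tilde{w})}=\mathcal{O}_{\widehat{X}(\widehat{w})}$, and the projection formula applies. Consequently the semigroups $S(v_\beta,\tau)$ and $S(v_\beta,\phi_0)$ coincide.

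For part (2), everything is transferred to $Z(\tilde{w})$ with the nef line bundle $\mathcal{L}_{\mathbf{i},\mathbf{n}}=\tilde{\pi}^*\mathcal{L}$, where $\mathbf{n}\in\mathbb{Z}^{r_1+\cdots+r_m}_{\geq 0}$ is given by Proposition~\ref{pull back}. I will use the identification (Fujita, \cite[Remark 4.2]{BCF24a}) of $\Delta(Z(\tilde{w}),\mathcal{L}_{\mathbf{i},\mathbf{n}},v_\beta,\phi_0)$ with a generalized string polytope. After a unimodular change of coordinates, this polytope is cut out by inequalities of the following kind. The first kind bounds each coordinate $x_{t_j(k)}$ from above by an affine function of the later coordinates. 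The leading term of that function is the number $\ell_{k,j}$ of Equation~\eqref{Eq:ljk}, evaluated at the vertex corresponding to the $T$-fixed point $\tilde{x}$. The second kind consists of nonnegativity constraints and string-cone inequalities. The plan is to take the simplex with vertex at the origin (the valuation of $\phi_0$, that is, the image of the base point) and edges $\kappa e_{t_j(k)}$ along the coordinate directions, up to sign conventions:
\[
\kappa\,\Delta_{N}=\Bigl\{x\in\mathbb{R}^{N}_{\geq 0}: \textstyle\sum_{j,k}x_{t_j(k)}\leq\kappa\Bigr\},\qquad N=\sum_j r_j ,
\]
and to verify that every defining inequality of the polytope holds on the vertices of this simplex. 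By convexity this gives the inclusion.

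The verification goes as follows. At a vertex $\kappa e_{t_j(k)}$, the upper-bound inequality for the coordinate $x_{t_j(k)}$ reads $\kappa\leq\ell_{k,j}$, which holds by the definition of $\kappa$; Proposition~\ref{prop:+ve} guarantees that every $\ell_{k,j}$ is positive, so $\kappa>0$. The inequalities for the other coordinates $x_{t_p(q)}$ must also hold at this vertex. These involve pairings $\langle\omega,\,s\cdots s(\alpha^\vee)\rangle$ weighted by the $n$'s, and they require showing that moving along a single coordinate direction by at most $\kappa$ does not decrease any other upper bound below $0$. This is the argument of \cite{BCF24a} for Bott--Samelson varieties, which assumes ample $\mathbf{n}$ (all $n>0$).

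The main obstacle is that here $\mathbf{n}$ has many zero entries (Example~\ref{eg1}), so $\mathcal{L}_{\mathbf{i},\mathbf{n}}$ is only nef. The string-polytope inequalities in the nef case can degenerate: some facets collapse, and the polytope can become lower-dimensional in certain directions unless the intersection numbers $\ell_{k,j}$ remain positive. My approach will be to redo the inequality check of \cite{BCF24a} with $n_{t_j(k)}\geq 0$ in place of $n_{t_j(k)}>0$. Positivity of all $\ell_{k,j}$ (Proposition~\ref{prop:+ve}) will replace strict positivity of the $n$'s wherever the original argument used it. The fallback is an approximation argument: $\mathcal{L}_{\mathbf{i},\mathbf{n}+\varepsilon\mathbf{1}}$ is ample and its polytope contains a simplex of size $\kappa_\varepsilon\to\kappa$; one then takes the limit, using continuity of the string polytope in $\mathbf{n}$. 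This nef-case verification is the step I expect to occupy the appendix.
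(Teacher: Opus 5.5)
Your part (1) is the paper's argument (Lemma~\ref{lem:NObody} plus Theorem~\ref{thm:Polytope}). You also point out that the identification of sections is needed for every tensor power $\mathcal{L}^{\otimes k}$, via $\tilde\pi_*\mathcal{O}_{Z(\tilde w)}=\mathcal{O}_{\widehat X(\widehat w)}$ and the projection formula; that is a correct precision the paper leaves implicit.

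Part (2), however, has a genuine gap. You plan to check the vertices $\kappa e_{t_j(k)}$ against every defining inequality, but you never actually have those inequalities. The shape you assert is not derived: coordinatewise upper bounds by affine functions of later coordinates with ``leading term'' $\ell_{k,j}$, plus string-cone inequalities, after an unspecified unimodular change of coordinates. The decisive checks, namely the remaining upper bounds and the string-cone inequalities at a coordinate vertex, are simply deferred.

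This is not a formality. In string coordinates, where the inequalities really do have Littelmann's form, the axis-parallel simplex is in general not contained in the polytope. Take $\widehat X(\widehat w)=SL_3/B$ with $\widehat w=(w_0)$, $\mathbf{i}=(1,2,1)$ and $\mathcal L=\mathcal L_\rho$, so $\mathbf n=(0,1,1)$ and $\kappa=1$. The string polytope is $\{0\le a_3\le 1,\ a_3\le a_2\le 1+a_3,\ 0\le a_1\le 1+a_2-2a_3\}$. At the origin, the edge $te_3$ violates $a_2\ge a_3$. At the lowest-weight vertex $(1,2,1)$, the edges $-te_2$ and $-te_3$ leave the polytope for every $t>0$. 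So everything hinges on coordinates you have not specified, and in coordinates where the simplex is axis-parallel you have no description by inequalities to check. Also, the argument of \cite{BCF24a} that you propose to ``redo with $n\ge 0$'' is not an inequality check at all.

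The missing idea is to prove containment from the inside, by exhibiting sections whose normalized valuations are the vertices. Put $\phi_{t_j(k)}:=\Phi_{\mathbf i,\mathbf n}\bigl(F_{\beta_{t_j(k)}}^{\ell_{k,j}}f_0\bigr)$, with $f_0$ dual to the extremal vector $v_0$. Every tensor factor of $v_0$ is an extremal weight vector, so by Lemmas~\ref{zero} and~\ref{nonzero} (both resting on Lemma~\ref{+roots}):
the factors before position $t_j(k)$ are killed by $E_{\beta_{t_j(k)}}$; and the $\beta_{t_j(k)}$-string lengths of the remaining factors add up exactly to $\ell_{k,j}$.
Hence $\phi_{t_j(k)}\neq 0$ and $\phi_{t_j(k)}/\phi_0=a\,x_{t_j(k)}^{\ell_{k,j}}$ with $a\neq 0$, so $-\ell_{k,j}e_{t_j(k)}\in\Delta$. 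Together with $0=v_\beta(\phi_0/\phi_0)$ and convexity, this gives $\Delta\supseteq\operatorname{conv}\{0,-\kappa e_{t_j(k)}\}$. This uses only $n_{t_j(k)}\ge 0$ and $\ell_{k,j}>0$ (Proposition~\ref{prop:+ve}), so the nef case needs no facet analysis and causes no degeneration.

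Your approximation fallback could be made rigorous, but as written it is only asserted. You would need the $\phi_0$-normalized bodies to form a closed global cone over $\mathbf n$ whose fibre over the big class $\mathbf n$ is $\Delta$, using $\phi_0(\mathbf n)\phi_0(\mathbf n')\propto\phi_0(\mathbf n+\mathbf n')$. You would also need the ample-case simplices to sit at the same vertex along the same axes. Moreover, the ample-case result you would cite is proved by exactly the section argument above, which already covers the nef case directly.
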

\begin{proof}
(1) Follows from Theorem~\ref{thm:Polytope} together with Lemma~\ref{lem:NObody}. 

(2) By similar arguments as in Section 4.4 and  Corollary~4.12 in \cite{BCF24a}, there exists a simplex of size $\kappa$ inside $\Delta(Z(\tilde{w}), \tilde{\pi}^*\mathcal{L}, v_\beta, \phi_0)$; the claim then follows from Lemma~\ref{lem:NObody}.
\end{proof}

 \begin{remark}
     As we already noted in Example \ref{eg1}, the pullback line bundle $\tilde{\pi}^*\mathcal{L}$ on $Z(\tilde{w})$ need not be very ample in general; it is only nef. In \cite{BCF24a}, the required simplex is constructed for very ample line bundles. However, the similar arguments extend to nef line bundles as well. This extension is discussed in Appendix~\ref{f_0}.
 \end{remark}

We now use Kaveh's result \cite[Corollary 11.4]{KK19} to estimate the lower bound for the Gromov width of a symplectic manifold. Applied to our case, we obtain:

\begin{corollary}\label{cor:lowerbound}
Let $(\widehat{X}(\widehat{w}), \mathcal{L})$ be a smooth generalized Bott-Samelson variety with a very ample line bundle $\mathcal L$. Then the Gromov width is at least the supremum of the sizes of open simplices contained in the interior of the Newton-Okounkov body $\Delta(\widehat{X}(\widehat{w}), \mathcal{L}, v_\beta, \tau)$. In particular, we have $w_G(\widehat{X}(\widehat{w}), \mathcal L)\geq \kappa$.
\end{corollary}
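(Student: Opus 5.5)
The plan is to deduce Corollary~\ref{cor:lowerbound} from Kaveh's theorem \cite[Corollary 11.4]{KK19} together with Theorem~\ref{thm:simplex}. Recall the content of Kaveh's result. Let $X$ be a smooth projective variety of dimension $d$ with a very ample line bundle $\mathcal{L}$. Let $v$ be a valuation with one-dimensional leaves, chosen so that the associated semigroup is finitely generated and the Newton-Okounkov body $\Delta$ is a polytope. Then every open simplex of size $a$ (a translate of $a$ times the standard open unimodular simplex) contained in the interior of $\Delta$ produces a symplectic embedding of $(B^{2d}(\sqrt{a/\pi}),\omega_{st})$ into $(X,\omega_{\mathcal{L}})$. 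Hence $w_G(X,\omega_{\mathcal{L}})$ is at least the supremum of such $a$.

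\textbf{Step 1: check the hypotheses of \cite[Corollary 11.4]{KK19} for $X=\widehat{X}(\widehat{w})$.}
\begin{enumerate}
\item Smoothness is part of the standing assumption (see Remark~\ref{rmk:smoothness}).
\item $\mathcal{L}$ is very ample by hypothesis, and $\omega_{\mathcal{L}}$ is the associated Fubini--Study form.
\item The valuation $v_\beta$ has one-dimensional leaves. It is defined on $\mathbb{C}(Z(\tilde{w}))$, which equals $\mathbb{C}(\widehat{X}(\widehat{w}))$ by Corollary~\ref{cor:birational}.
\item The body $\Delta(\widehat{X}(\widehat{w}),\mathcal{L},v_\beta,\tau)$ is a rational polytope by Theorem~\ref{thm:simplex}(1).
\end{enumerate}
Kaveh's construction also needs the value semigroup to be finitely generated, so that a toric degeneration to the toric variety of $\Delta$ exists. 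By Lemma~\ref{lem:NObody}, this semigroup is identified with the one for $(Z(\tilde{w}),\tilde{\pi}^*\mathcal{L},v_\beta,\phi_0)$. That one is a generalized string cone, which Fujita \cite{Fujita} shows is finitely generated; Theorem~\ref{thm:Polytope} is a consequence of this. With these checks the first sentence of the corollary follows directly.

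\textbf{Step 2: the bound $w_G\ge\kappa$.} By Theorem~\ref{thm:simplex}(2), $\Delta$ contains a closed simplex of size $\kappa$. For any $\epsilon>0$, shrink it slightly about an interior point of $\Delta$ to get an open simplex of size $\kappa-\epsilon$ lying in the interior of $\Delta$. Here $\Delta$ is full-dimensional because its dimension equals $\dim \widehat{X}(\widehat{w})=\sum_j r_j$. Applying Step 1 to this simplex gives $w_G\ge\kappa-\epsilon$. Letting $\epsilon\to0$ gives $w_G\ge\kappa$.

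\textbf{Main obstacle.} The substantive input is Theorem~\ref{thm:simplex}(2): the simplex of size $\kappa$ must be placed in the body of the merely nef bundle $\tilde{\pi}^*\mathcal{L}$ on $Z(\tilde{w})$, rather than a very ample one. This is deferred to Appendix~\ref{f_0}. I would handle it by rerunning the argument of \cite[\S4.4]{BCF24a}, with the string inequalities and the numbers $\ell_{k,j}$ from Equation~\eqref{Eq:ljk} in place of the very ample data. The remaining subtle point is compatibility of the normalization: the unimodular lattice simplex of size $\kappa$ must correspond to a ball of capacity $\kappa$ for $\omega_{\mathcal{L}}$. This is exactly Kaveh's convention, so no further rescaling is needed.
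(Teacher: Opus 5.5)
Your proposal is correct and follows the paper's route: apply Kaveh's result \cite[Corollary 11.4]{KK19} to the rational polytope $\Delta(\widehat{X}(\widehat{w}),\mathcal{L},v_\beta,\tau)$, then invoke Theorem~\ref{thm:simplex}(2) to get the bound $\kappa$. Your extra steps, checking Kaveh's hypotheses and shrinking the closed simplex of size $\kappa$ (which has a vertex at the origin, on the boundary of $\Delta$) into the interior before letting $\epsilon\to 0$, supply details the paper leaves implicit.
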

\begin{proof} The last assertion follows from Theorem \ref{thm:simplex}.
\end{proof}

\begin{proof}[\bf{Proof of Theorem \ref{thm:main}}] 
We first consider the case of an integral K\"ahler form $\omega$ of $\widehat{X}(\widehat{w})$,
that is $\omega$ is the pullback of the Fubini-Study form on the projectivization of $H^ 0(\widehat{X}(\widehat{w}), \mathcal L)$
for some very ample line bundle $\mathcal L$ of $\widehat{X}(\widehat{w})$. We thus write $\omega=\omega_{\mathcal{L}}$. In this case, the proof follows from Corollary \ref{cor:lowerbound} together with Corollary \ref{cor:upperbound}. By the conformality property of the Gromov width, we get the result.
\end{proof}

\begin{example}\label{ex:Gromov width}
    Keep the notations same as discussed in Example~\ref{eg1}. Let $\mathcal{L}$ be the very ample line bundle over $\widehat{X}(\widehat{w})$ associated with $(1,2,3)\in \mathbb{Z}^3$, then the pull-back line bundle $\mathcal{L}_{\mathbf{i}, \mathbf{n}}$ of $\mathcal{L}$ over $Z(\tilde{w})$ corresponds to $(0,0,2,1,3)\in \mathbb{Z}^5$. After evaluating the values of $\ell_{k,j}$ for $1 \leq j \leq 2$ and $1 \leq k \leq r_j$ using Equation~\ref{Eq:ljk}, we obtain:
    \[
        \ell_{1,1} = 2, \quad \ell_{2,1} = 6, \quad \ell_{3,1} = 5, \quad \ell_{4,1} = 4, \quad \text{and} \quad \ell_{1,2} = 3.
    \] 
    Therefore, by Theorem~\ref{thm:main}, the Gromov width 
    $w_G(\widehat{X}(\widehat{w}), \mathcal{L})$ is $2$.
\end{example}


\section{Seshadri constant}\label{sec:Seshadri con}
In this section, we study upper bounds for Seshadri constants on generalized Bott-Samelson varieties.

Seshadri constants were introduced by Demailly in \cite{Dem92}, motivated by an ampleness criterion for line bundles due to Seshadri; see \cite[Theorem 7.1]{Hart70}. Since their introduction, they have become a fundamental tool in understanding the local positivity of line bundles on projective varieties. For more details on Seshadri constants, we refer to \cite{Bauer}.

\begin{theorem}[Seshadri criterion]
Let $X$ be a projective variety and let $\mathcal L$ be a line bundle on $X$.
Then $\mathcal L $ is ample if and only if there exists a constant $\varepsilon > 0$ such that,
for every point $x \in X$ and every irreducible curve $C \subset X$ passing through
$x$, the following inequality holds:
\[
\mathcal L \cdot C \ge \varepsilon\, \mathrm{mult}_x(C).
\]
\end{theorem}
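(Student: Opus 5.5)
The statement is the classical Seshadri criterion. I would prove the two directions separately: a hyperplane-section argument for the easy direction, and Nakai--Moishezon combined with a blow-up computation for the converse. This is the route of \cite[Theorem~1.4.13]{Laz04}; since the paper already refers to \cite[Chapter 5]{Laz04}, we may freely use the standard intersection-theoretic facts recalled there.

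\emph{Ample $\Rightarrow$ uniform bound.} Suppose $\mathcal L$ is ample and choose $m\ge 1$ with $\mathcal L^{\otimes m}$ very ample, giving an embedding $X\hookrightarrow \mathbb P^N$. Fix $x\in X$ and an irreducible curve $C\ni x$. Choose a hyperplane $H$ through $x$ that does not contain $C$. Then $H\cap C$ is a finite scheme whose length at $x$ is at least $\mathrm{mult}_x(C)$, so
\[
m\,(\mathcal L\cdot C)=H\cdot C\ \ge\ \mathrm{mult}_x(C).
\]
Hence $\varepsilon=1/m$ works, uniformly in $x$ and $C$.

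\emph{Uniform bound $\Rightarrow$ ample.} Assume such an $\varepsilon>0$ exists. First, $\mathcal L\cdot C\ge\varepsilon>0$ for every curve $C$, so $\mathcal L$ is nef. By Nakai--Moishezon it suffices to show $\mathcal L^{k}\cdot V>0$ for every irreducible subvariety $V\subseteq X$ of dimension $k\ge 1$. Fix such a $V$ and a smooth point $x\in V$. Let $\mu\colon V'\to V$ be the blow-up at $x$, with exceptional divisor $E\simeq\mathbb P^{k-1}$, and for $0<\delta<\varepsilon$ rational consider $\mathcal N_\delta:=\mu^*\mathcal L-\delta E$. I would show $\mathcal N_\delta$ is nef on $V'$ by checking irreducible curves $C'\subset V'$:
\begin{itemize}
\item if $C'\subset E$, then $\mu^*\mathcal L\cdot C'=0$ and $-E|_E=\mathcal O_{\mathbb P^{k-1}}(1)$, so $\mathcal N_\delta\cdot C'>0$;
\item otherwise $C'$ is the strict transform of $C=\mu(C')\subset V\subset X$, and $E\cdot C'=\mathrm{mult}_x(C)$ (which is $0$ if $x\notin C$). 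The hypothesis then gives
\[
\mathcal N_\delta\cdot C'=\mathcal L\cdot C-\delta\,\mathrm{mult}_x(C)\ge(\varepsilon-\delta)\,\mathrm{mult}_x(C)\ge 0 .
\]
\end{itemize}
Since $\mathcal N_\delta$ is nef, Kleiman's theorem gives $\mathcal N_\delta^{k}\ge 0$. Expanding, the mixed terms $(\mu^*\mathcal L)^{i}\cdot E^{k-i}$ with $0<i<k$ vanish because $E$ maps to a point. Also $E^{k}=(-1)^{k-1}$ for the blow-up of a smooth point. Therefore
\[
0\le \mathcal N_\delta^{k}=\mathcal L^{k}\cdot V-\delta^{k},
\]
so $\mathcal L^{k}\cdot V\ge\delta^{k}>0$, and Nakai--Moishezon yields ampleness.

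The main obstacle is the second direction. The key step is certifying nefness of $\mathcal N_\delta$: this is exactly where the \emph{uniformity} of $\varepsilon$ over all points and curves is needed, and where one uses the identity $E\cdot C'=\mathrm{mult}_x C$ for strict transforms. One must also check the sign bookkeeping in the self-intersection $E^{k}$ on a possibly singular $V$. Blowing up only at a smooth point of $V$ keeps $\mu$ an honest blow-up near $E$, and the intersection numbers remain well-defined on the projective variety $V'$.
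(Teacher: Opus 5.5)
The paper does not prove this statement. It quotes it as a classical result, pointing to \cite[Theorem 7.1]{Hart70}, and uses it only to motivate Demailly's definition of Seshadri constants. Your argument is the standard proof, essentially that of \cite[Theorem~1.4.13]{Laz04}, and it is correct as written:
\begin{itemize}
\item The hyperplane section through $x$ not containing $C$ gives $\varepsilon=1/m$.
\item In the converse, your two curve cases establish nefness of $\mu^*\mathcal L-\delta E$ on $\mathrm{Bl}_xV$.
\item The vanishing of the mixed terms and $E^k=(-1)^{k-1}$ then give $\mathcal L^k\cdot V\ge\delta^k>0$, so Nakai--Moishezon applies.
\end{itemize}

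One correction to your closing commentary: the proof never uses uniformity of $\varepsilon$ in the point $x$. For each $V$ you use only two things:
\begin{itemize}
\item a positive lower bound for $\mathcal L\cdot C/\mathrm{mult}_x(C)$ over curves $C\subset V$ through one chosen smooth point $x$ of $V$;
\item nefness of $\mathcal L$, which handles the curves missing $x$ (there your displayed inequality just reads $\mathcal L\cdot C\ge 0$).
\end{itemize}
So your argument actually shows that $\varepsilon(X,\mathcal L;x)>0$ for every $x\in X$ already forces ampleness. What the hypothesis genuinely excludes is non-uniformity over the curves through a fixed point. For example, Mumford's nef divisor $D$ satisfies $D\cdot C>0$ for all curves $C$ but $D^2=0$. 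Your blow-up computation applied to $D$ shows $\varepsilon(D;x)\le\sqrt{D^2}=0$ at every point.
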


This result naturally raises the question of how large such a constant
$\varepsilon$ can be. To measure this local positivity more precisely,
Demailly introduced the following notion.

\begin{definition}[Seshadri constant at a point]
Let $X$ be a projective variety and let $\mathcal L$ be a nef line bundle on $X$.
For a given point $x \in X$, the \emph{Seshadri constant} of $\mathcal L$ at $x$ is defined by
\[
\varepsilon(X, \mathcal L;x)
\;:=\;
\inf_{x \in C}
\frac{\mathcal L \cdot C}{\mathrm{mult}_x(C)},
\]
where the infimum is taken over all irreducible curves $C \subset X$ passing
through the point $x$.
\end{definition}

The following result relates to the Gromov width:

\begin{proposition}[\cite{BC01}, Proposition~6.2.1]
Let $X$ be a smooth projective variety with a very ample line bundle $\mathcal L$, and let $\omega_{\mathcal L}$ be the associated Fubini–Study form. Then for any $x \in X$,
\[
\varepsilon(X, \mathcal L, x) \leq w_G(X, \omega_{\mathcal L}).
\]
\end{proposition}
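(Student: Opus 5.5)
The plan is to use the standard blow-up characterization of Seshadri constants and translate ampleness on the blow-up into a symplectic ball embedding. Fix $x\in X$ and let $\sigma\colon \widetilde X=\mathrm{Bl}_xX\to X$ be the blow-up with exceptional divisor $E\simeq\mathbb P^{n-1}$, where $n=\dim_{\mathbb C}X$. I would first recall, as in \cite[Chapter 5]{Laz04}, that
\[
\varepsilon(X,\mathcal L,x)=\sup\{t\ge 0 \mid \sigma^*\mathcal L-tE \text{ is nef}\}.
\]
Hence for every rational $0<t<\varepsilon(X,\mathcal L,x)$ the $\mathbb Q$-divisor $\sigma^*\mathcal L-tE$ is ample. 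This holds because it lies in the interior of the nef cone: it is a positive combination of a nef class and the ample class $\sigma^*\mathcal L-\delta E$ for small $\delta$. It therefore suffices to show that $w_G(X,\omega_{\mathcal L})\ge t$ for every such $t$.

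For the second step, pick a Kähler form $\widetilde\omega$ on $\widetilde X$ representing $c_1(\sigma^*\mathcal L-tE)$, for instance a rescaled Fubini–Study pullback from a multiple that is very ample. Its restriction to $E$ has class $t$ times the hyperplane class, since $E|_E=\mathcal O_{\mathbb P^{n-1}}(-1)$. Then I invoke the symplectic blow-down correspondence of McDuff–Polterovich. A symplectic form on $\widetilde X$ whose restriction to the exceptional divisor is standard of area $t$ on lines yields, by a neighbourhood theorem around $E$ (the normal bundle is the tautological bundle), a symplectic form $\omega'$ on $X$ together with a symplectic embedding of the ball $B^{2n}(\sqrt{t/\pi})$, of capacity $t$, into $(X,\omega')$. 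Moreover $[\omega']=c_1(\mathcal L)=[\omega_{\mathcal L}]$.

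The main obstacle is this step together with the final comparison of $\omega'$ with $\omega_{\mathcal L}$. One first has to normalize $\widetilde\omega$ near $E$ so that it agrees with the standard model form on a neighbourhood of the zero section in $\mathcal O(-1)$. I would do this with a Moser/Weinstein-type argument relative to $E$, first deforming $\widetilde\omega|_E$ to the Fubini–Study form within its cohomology class by a linear path of Kähler forms on $E$. Blowing down then glues in the ball.

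To compare $\omega'$ with $\omega_{\mathcal L}$, I would arrange that $\omega'$ is in fact Kähler, or at least deformation equivalent to $\omega_{\mathcal L}$ through cohomologous symplectic forms. Concretely, I would take $\widetilde\omega$ close to $\sigma^*\omega_{\mathcal L}$ away from a small neighbourhood of $E$, interpolating in a ball chart around $x$. Then the path $s\omega'+(1-s)\omega_{\mathcal L}$ is symplectic and has constant cohomology class, and Moser's theorem gives a symplectomorphism $(X,\omega')\cong(X,\omega_{\mathcal L})$. Composing yields $B^{2n}(\sqrt{t/\pi})\hookrightarrow(X,\omega_{\mathcal L})$, so $w_G\ge t$. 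Letting $t\uparrow\varepsilon(X,\mathcal L,x)$ gives the claim.
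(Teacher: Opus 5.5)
Your first two steps are sound and follow the standard McDuff--Polterovich blow-up route on which \cite[Proposition~6.2.1]{BC01} rests; the paper itself only quotes that result and gives no proof. For rational $0<t<\varepsilon(X,\mathcal L,x)$ the class $\sigma^*\mathcal L-tE$ is ample and carries a K\"ahler form $\widetilde\omega$ whose lines in $E$ have area $t$. Normalizing near $E$ and blowing down then produces a symplectic form $\omega'$ on $X$ with $[\omega']=c_1(\mathcal L)$ that contains a ball of capacity $t$.

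The gap is the final identification of $(X,\omega')$ with $(X,\omega_{\mathcal L})$, and the mechanism you propose cannot work. You cannot take $\widetilde\omega$ in the class $c_1(\sigma^*\mathcal L-tE)$ close to $\sigma^*\omega_{\mathcal L}$ outside a small neighbourhood $N$ of $E$ unless $t$ is small. Since $\sigma^*\mathcal L|_E$ is trivial and $E^n=(-1)^{n-1}$, one gets $\int_{\widetilde X}\widetilde\omega^n=(\sigma^*\mathcal L-tE)^n=\mathcal L^n-t^n$. On the other hand, $\int_{\widetilde X\setminus N}\widetilde\omega^n$ is at most $\mathcal L^n-t^n$ and would be close to $\int_{X\setminus\sigma(N)}\omega_{\mathcal L}^n=\mathcal L^n-\int_{\sigma(N)}\omega_{\mathcal L}^n$. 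Hence $\int_{\sigma(N)}\omega_{\mathcal L}^n$ must be at least roughly $t^n$, i.e.\ $\sigma(N)$ must have $\omega_{\mathcal L}$-volume at least that of the ball you are looking for. So ``interpolating in a ball chart around $x$'' of the required size essentially presupposes the conclusion.

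Independently, the claim that $s\omega'+(1-s)\omega_{\mathcal L}$ is symplectic is unjustified. A convex combination of symplectic forms is nondegenerate when both tame a common almost complex structure. But $\omega'$, produced by a symplectic (not holomorphic) normalization near $E$ followed by blow-down, need not be K\"ahler for the complex structure of $X$.

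The deformation has to be done on $\widetilde X$, where convexity is available. K\"ahler forms for the fixed complex structure of $\widetilde X$ form a convex set. So join $\widetilde\omega$ by the linear path $\widetilde\omega_s=(1-s)\widetilde\omega_0+s\widetilde\omega$ to a K\"ahler form $\widetilde\omega_0$ in the class $c_1(\sigma^*\mathcal L-\delta E)$ with $\delta$ small, namely a small-weight K\"ahler blow-up of $\omega_{\mathcal L}$ at $x$. Only for small weight can such a form agree with $\sigma^*\omega_{\mathcal L}$ off a small neighbourhood of $E$, and its blow-down is isotopic to $\omega_{\mathcal L}$.

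Every $\widetilde\omega_s$ is K\"ahler, so $E$ stays a symplectic submanifold with line area $(1-s)\delta+st$. Performing your normalization near $E$ and the blow-down parametrically in $s$ gives a smooth family $\omega'_s$ of symplectic forms on $X$, all in the class $c_1(\mathcal L)$, with $\omega'_0$ isotopic to $\omega_{\mathcal L}$ and $\omega'_1=\omega'$. Moser stability needs only a smooth path of cohomologous symplectic forms, not a linear one, and it gives $(X,\omega')\cong(X,\omega_{\mathcal L})$. With this replacement the rest of your argument goes through, including the limit $t\uparrow\varepsilon(X,\mathcal L,x)$.
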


From this and Theorem~\ref{thm:main}, we deduce:

\begin{corollary}\label{Cor:Ses}
Let $\widehat{X}(\widehat{w})$ be a smooth generalized Bott-Samelson variety with a very ample line bundle $\mathcal{L}$. Then for any $x \in \widehat{X}(\widehat{w})$,
\[
\varepsilon(\widehat{X}(\widehat{w}), \mathcal{L}, x) \leq \kappa .
\]
\end{corollary}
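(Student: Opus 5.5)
The plan is to chain the Biran--Cieliebak inequality just recalled with the main theorem, specialized to the Fubini--Study form $\omega_{\mathcal L}$ of $\mathcal L$. Since $\mathcal L$ is very ample on the smooth projective variety $\widehat{X}(\widehat{w})$, the proposition from \cite{BC01} applies directly. It gives, for every point $x\in\widehat{X}(\widehat{w})$,
\[
\varepsilon(\widehat{X}(\widehat{w}),\mathcal L,x)\leq w_G(\widehat{X}(\widehat{w}),\omega_{\mathcal L}).
\]
No genericity assumption on $x$ is needed at this stage.

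Next I identify the right-hand side with $\kappa$. The form $\omega_{\mathcal L}$ is an integral, hence rational, K\"ahler form, so Theorem~\ref{thm:main} applies. In its proof (the integral case) one has $w_G(\widehat{X}(\widehat{w}),\omega_{\mathcal L})=\kappa$. The upper bound $\leq\kappa$ comes from Corollary~\ref{cor:upperbound}, and the lower bound $\geq\kappa$ from Corollary~\ref{cor:lowerbound}. Concretely, $\int_C\omega_{\mathcal L}=\mathcal L\cdot C$ for every curve $C$. The proposition preceding Corollary~\ref{cor:upperbound} computes the minimum of $\mathcal L\cdot C$ over minimal curves as $\min_{i,j}\ell_{i,j}=\kappa$, using that each minimal family contains one of the $T$-stable curves $\widehat{C}_{i,j}$ of Proposition~\ref{prop:curve}. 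Combining the two displays gives $\varepsilon(\widehat{X}(\widehat{w}),\mathcal L,x)\leq\kappa$.

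There is a more elementary route if one wants to bypass the lower-bound machinery, since only the inequality $w_G\leq\kappa$ is actually used. For $x$ in the dense orbit $B\widehat{x}$, the translate $b\widehat{C}_{i,j}$ is a smooth curve through $x$ with multiplicity $1$. It has the same degree $\ell_{i,j}$, because the group action preserves the numerical class. Hence $\varepsilon(\mathcal L,x)\leq\kappa$ there directly from the definition. The statement for arbitrary $x$ would then follow from lower semicontinuity of the Seshadri function, and the BC01 route avoids this step altogether.

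The only real obstacle is bookkeeping: checking that the normalization of $\omega_{\mathcal L}$ in \cite{BC01} matches the one used in Theorem~\ref{thm:main}, namely that $[\omega_{\mathcal L}]=c_1(\mathcal L)$ with balls measured by capacity $\pi r^2$. This is exactly the convention fixed in the definition of $w_G$ in the introduction, so I expect it to be consistent.
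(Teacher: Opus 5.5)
Your main argument is the paper's own: apply \cite[Proposition~6.2.1]{BC01} to $\omega_{\mathcal L}$, then bound $w_G(\widehat{X}(\widehat{w}),\omega_{\mathcal L})$ by $\kappa$ via Theorem~\ref{thm:main}; you are right that only the upper bound from Corollary~\ref{cor:upperbound} is actually needed. Your alternative route is also correct and avoids all symplectic input (\cite{BC01}, \cite{BCF24b}, and the classification of minimal curves): translates $b\widehat{C}_{i,j}$ pass through every point of the open orbit $B\widehat{x}$ with multiplicity $1$ and degree $\ell_{i,j}$, and the Seshadri constant attains its maximum at very general points (lower semicontinuity in the countable Zariski topology), which extends the bound to every $x$.
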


\section{Appendix}\label{f_0}
In this section, for the reader’s convenience, we prove that the Newton-Okounkov body
\[
\Delta(Z({\tilde{w}}), \mathcal{L}_{\mathbf{i}, \mathbf{n}}, v_\beta, \phi_0)
\]
associated to a nef line bundle $\mathcal{L}_{\mathbf{i}, \mathbf{n}}$ on $X$ contains a simplex of size $\kappa$ when $\mathcal{L}_{\mathbf{i}, \mathbf{n}}$ is the pullback of a very ample line bundle from a generalized Bott-Samelson variety. Recall that this pullback is not very ample in general, but it is nef. In \cite{BCF24a}, the authors assume that the line bundle is very ample. Here, using the same arguments, we obtain the result for nef line bundle $\mathcal{L}_{\mathbf{i}, \mathbf{n}}$.

Let $Z({\tilde{w}})$ be the Bott-Samelson variety associated with the reduced (admissible generalized) decomposition ${w}= s_{i_1}\cdots s_{i_m}$ of $w\in W$.
We consider a nef line bundle 
\[ 
\mathcal{L}_{\mathbf{i},\mathbf{n}} \quad \text{with} \quad \mathbf{n} = (n_{1}, \dots ,n_m) \in \mathbb{Z}^m_{\geq 0}
\] 
The associated  morphism is given by \[ 
\Psi_{\mathbf{i}, \mathbf{n}}: {Z}({\tilde{w}}) \to \mathbb{P}\left(\bigotimes_{j=1}^{m} V(n_{j}{\omega}_{i_{{j}}})\right)
\]
\[
[p_{{1}},\dots,p_{m} ]\to[p_{1}v_{n_{1}{\omega}_{i_{1}}}, p_{1}p_{2}v_{n_{2}{\omega}_{i_{2}}},\dots, \big(\prod_{j=1}^{m}p_{j})v_{n_{m}{\omega}_{i_{m}}}],
\]
where $V(\lambda)$ denotes the simple $G$-module with highest weight $\lambda$ and $v_\lambda$ be a highest weight vector. Here $\Psi_{\mathbf{i},\mathbf{n}}(\tilde{x})=[v_0]$ with \[
v_0=s_{i_{1}}v_{n_{1}{\omega}_{i_{1}}} \otimes s_{i_{1}}s_{i_{2}}v_{n_{2}{\omega}_{i_{2}}} \otimes \dots\otimes w v_{n_{m}{\omega}_{i_{m}}}
\]
Moreover, the morphism $\Psi_{\mathbf{i},\mathbf{n}}$ induces an isomorphism of $P_{i_{1}}$-modules\[
\Phi_{\mathbf{i},\mathbf{n}}: V^*_{\mathbf{i},\mathbf{n}} \to H^{0}(Z(\tilde{w}),\mathcal{L}_{\mathbf{i},\mathbf{n}}),
\]
where $V_{\mathbf{i},\mathbf{n}}$ is the generalized Demazure module. As a complex vector space, it is generated by vectors of the form:
\begin{align*}
&F^{a_{1}}_{i_{1}}\Big( 
    v_{n_{1}{\omega}_{i_{1}}} \otimes 
    F^{a_{2}}_{i_{2}}\Big( 
        v_{n_{2}{\omega}_{i_{2}}} \otimes 
        \cdots \otimes 
        F^{a_{m-1}}_{i_{m-1}}\Big(
            v_{n_{m-1}{\omega}_{i_{m-1}}} \otimes
            F^{a_{m}}_{i_{m}} 
            v_{n_{m}{\omega}_{i_{m}}}
        \Big)\cdots
    \Big)
\Big)
\end{align*}
with $a_{j}\in \mathbb{Z}_{\geq{0}}$ and $F_{i_{j}}$ being the root operator associated with the root $-\alpha_{i_{j}}$.

Let $f_0$ denotes the dual of $v_0$ in $V^*_{\mathbf{i,n}}$. More explicitly, 
\begin{equation}\label{eq:f0}
    f_0= s_{i_{1}}v^*_{n_{1}{\omega}_{i_{1}}} \otimes s_{i_{1}}s_{i_{2}}v^*_{n_{2}{\omega}_{i_{2}}} \otimes \dots\otimes w v^*_{n_{m}{\omega}_{i_{m}}}
\end{equation}

and define \[\quad \phi_0: =\Phi_{\mathbf{i},\mathbf{n}}(f_0).
\]

\begin{lemma}\label{Section}
    The section $\phi_0\in H^0(Z({\tilde{w}}), \mathcal{L}_{\mathbf{i},\mathbf{n}})$ does not vanish on the open subset $B{\tilde{x}}$ of $Z({\tilde{w}}).$
\end{lemma}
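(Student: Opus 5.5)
The plan is to evaluate $\phi_0$ on the open cell by pairing $f_0$ with the image of a point under $\Psi_{\mathbf{i},\mathbf{n}}$, and to exploit $B$-equivariance. By construction of $\Phi_{\mathbf{i},\mathbf{n}}$, the value of $\phi_0$ at a point $z\in Z(\tilde{w})$ (in a local trivialization of $\mathcal{L}_{\mathbf{i},\mathbf{n}}$) is, up to a nonzero scalar, $\langle f_0, \hat{\Psi}(z)\rangle$, where $\hat{\Psi}(z)$ is any nonzero lift of $\Psi_{\mathbf{i},\mathbf{n}}(z)$. Since $B\tilde{x}$ is a single $B$-orbit, every point has the form $z=b\tilde{x}$ with $b\in B$, and $\Psi_{\mathbf{i},\mathbf{n}}$ is $B$-equivariant. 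Therefore it suffices to show that $\langle f_0, b\cdot v_0\rangle\neq 0$ for all $b\in B$, i.e. that $b^{-1}f_0$ does not kill $v_0$. Note that nefness, rather than very ampleness, plays no role here. When some $n_j=0$, the corresponding factor $V(0)$ is trivial and simply drops out of the tensor product.

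The key step is a weight computation. Write $b=tu$ with $t\in T$ and $u\in U$. The vector $v_0$ is a pure tensor of extremal weight vectors $u_j:=s_{i_1}\cdots s_{i_j}v_{n_j\omega_{i_j}}$ of weights $\mu_j:=s_{i_1}\cdots s_{i_j}(n_j\omega_{i_j})$. I plan to expand $u\cdot v_0=\bigotimes_j u\cdot u_j$. Each factor equals $u_j$ plus vectors of weights strictly higher than $\mu_j$, since $U$ only raises weights. Then $f_0=\bigotimes_j u_j^*$, viewed as a functional on the tensor product (restricted to $V_{\mathbf{i},\mathbf{n}}$), is by definition the dual weight vector of weight $-\sum_j\mu_j$. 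It picks out exactly the $(\mu_1,\dots,\mu_m)$-component, because each $u_j^*$ pairs only with the $\mu_j$-weight space of $V(n_j\omega_{i_j})$. Extremal weight spaces are one-dimensional, so that component is just $u_j$. Hence $\langle f_0,u\cdot v_0\rangle=\langle f_0,v_0\rangle=1$, and $\langle f_0,tu\cdot v_0\rangle=\chi(t)\neq 0$ for the character $\chi=\sum_j\mu_j$.

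The main obstacle is justifying that $f_0$, defined as a tensor of dual extremal vectors, gives a well-defined element of $V^*_{\mathbf{i},\mathbf{n}}$ which pairs factorwise. This requires that $v_0$ actually lies in $V_{\mathbf{i},\mathbf{n}}\subset\bigotimes_j V(n_j\omega_{i_j})$ (it does, being $\hat{\Psi}(\tilde{x})$), and that restriction of the functional $\bigotimes_j u_j^*$ from the full tensor product is what is meant. Here I would argue that the span of $\Psi_{\mathbf{i},\mathbf{n}}(Z(\tilde{w}))$ is $V_{\mathbf{i},\mathbf{n}}$, so restricting functionals is legitimate and computations may be done in the full tensor product. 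I would also check that $u\cdot u_j$ stays in $V(n_j\omega_{i_j})$ with the claimed triangular form, which follows from $U=\prod_{\beta>0}U_\beta$ and $\exp(xE_\beta)$ adding only weights $\mu_j+k\beta$ with $k>0$. This yields non-vanishing on all of $B\tilde{x}$, as in \cite[Lemma~4.1]{BCF24a}.
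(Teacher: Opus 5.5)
Your proof is correct and follows the same route as the paper. In both, $B$-equivariance of $\Psi_{\mathbf{i},\mathbf{n}}$ reduces the claim to $f_0$ not vanishing on the orbit of $v_0$, and a weight argument finishes: unipotent elements only add strictly higher-weight components to $v_0$, and the dual weight vector $f_0$ ignores these. The paper states this briefly using $U_1\times\cdots\times U_m\simeq B\tilde{x}$ (the torus factor never arises because $T$ fixes $\tilde{x}$), while your version writes out the weight computation that the paper leaves implicit.
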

\begin{proof}
    Note that $f_0$ does not vanish on $U_1\times\cdots \times U_m[v_0]$, by a simple consideration on weights. By the morphism $\Phi_{\mathbf{i},\mathbf{n}}$ and the isomorphism $U_1\times\cdots \times U_m \simeq B{\tilde{x}} $, the result follows.
\end{proof}
With the help of Lemma \ref{Section} we are able to define the Newton-Okounkov body associated to $\phi_0$, that is\[
\Delta_{\mathbf{i},\mathbf{n}}: = \Delta({Z}({\tilde{w}}), \mathcal{L}_{\mathbf{i},\mathbf{n}}, v_\beta, \phi_0).
\]

\begin{theorem}[\cite{Fujita},~Cor.~6.3]\label{Fujita}
The Newton-Okounkov body $\Delta_{\mathbf{i}, \mathbf{n}}$ is a convex polytope.
\end{theorem}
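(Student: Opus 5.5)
The plan is to identify $\Delta_{\mathbf{i},\mathbf{n}}$ with a generalized string polytope, following Fujita's argument. Everything is first translated into a statement about the generalized Demazure module $V_{\mathbf{i},\mathbf{n}}$ and its crystal. By Lemma \ref{Section}, the section $\phi_0$ does not vanish on $B\tilde{x}\simeq U_1\times\cdots\times U_m$. Hence every $\sigma\in H^0(Z(\tilde{w}),\mathcal{L}_{\mathbf{i},k\mathbf{n}})$ gives, after dividing by $\phi_0^{k}$, a polynomial in the coordinates $x_{1},\dots,x_m$ on $U_1\times\cdots\times U_m$. For $\sigma=\Phi_{\mathbf{i},k\mathbf{n}}(f)$ this polynomial is the matrix coefficient
\[
(x_1,\dots,x_m)\mapsto f\bigl(\exp(x_1E_{\beta_1})\cdots\exp(x_mE_{\beta_m})v_0\bigr).
\]
I would rewrite this by moving the exponentials through the tensor factors. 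It is then a sum over monomials whose exponent vectors record how many times each lowering operator $F_{i_j}$ is applied. This matches the spanning set of $V_{\mathbf{i},\mathbf{n}}$ described above, read from the lowest vector $v_0$ (with $f_0$ as the normalizing dual vector).

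The second step is to choose a good basis of $V^*_{\mathbf{i},k\mathbf{n}}$. I would take the dual of the (upper) global basis, or any perfect basis compatible with the Kashiwara operators, and show two things. First, the $v_\beta$-values of the corresponding sections are pairwise distinct. Second, the value of the section attached to a crystal element $b$ is the negative of its generalized string parametrization $\mathrm{str}_{\mathbf{i}}(b)$, taken with respect to the fixed total order. The key point is that the leading monomial (maximal for the chosen order) of the matrix coefficient is produced by the greedy application of $\tilde{e}_{i_1},\tilde{e}_{i_2},\dots$. Perfectness of the basis makes that coefficient nonzero, and it forces every other monomial to be smaller. Since distinct valuations imply that the number of values equals $\dim H^0$, it follows that
\[
S(v_\beta,\phi_0)=\bigcup_{k>0}\{k\}\times\bigl(-\mathrm{str}_{\mathbf{i}}(\mathcal{B}_{\mathbf{i},k\mathbf{n}})\bigr).
\]
This second step is the main obstacle. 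The delicate points are the compatibility of the highest-term valuation with the string order, and the nonvanishing of the leading coefficient. I would handle both by induction on $m$, peeling off the first factor $V(n_1\omega_{i_1})\otimes(\cdots)$ and using the tensor product rule for crystals together with the $\mathfrak{sl}_2$-string structure for $\alpha_{i_1}$.

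Finally, I would use the explicit inequalities for generalized string parameters, the analogue of Littelmann's and Berenstein--Zelevinsky's string cone inequalities. They show that $\bigcup_k\{k\}\times\mathrm{str}_{\mathbf{i}}(\mathcal{B}_{\mathbf{i},k\mathbf{n}})$ is the set of lattice points of a rational polyhedral cone in $\mathbb{R}_{\ge0}\times\mathbb{R}^m$. That cone is cut out by finitely many linear inequalities whose coefficients are linear in $(k,k\mathbf{n})$. Taking the closed convex cone and slicing at $k=1$ then gives a rational convex polytope.

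Throughout, the nef hypothesis causes no trouble: when $n_j=0$ the factor $V(0)$ is trivial, its crystal is a single point, and the corresponding constraints degenerate to equalities. So the argument never uses $n_j>0$; only $\mathbf{n}\in\mathbb{Z}_{\ge0}^m$ is needed for $V_{\mathbf{i},\mathbf{n}}$ and its crystal to be defined.
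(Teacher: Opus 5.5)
The paper does not prove this statement; it quotes it from Fujita (Cor.~6.3). Your outline is the route behind that citation: rational sections as matrix coefficients on $U_1\times\cdots\times U_m$, valuations of a crystal-compatible dual basis equal to minus generalized string parameters, then polyhedrality of the generalized string cone. As a self-contained argument, however, it leaves unproved exactly the part of the statement that has content.

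A Newton--Okounkov body is convex (and here compact) by construction. Your Steps 1--2, even if completed, only identify $\Delta_{\mathbf{i},\mathbf{n}}$ with the closed convex hull of $\bigcup_k -\tfrac{1}{k}\,\mathrm{str}_{\mathbf{i}}(\mathcal{B}_{\mathbf{i},k\mathbf{n}})$. Being a \emph{polytope} is the further assertion that the cone these points span is finitely generated. Step 3 settles this by invoking ``the explicit inequalities for generalized string parameters, the analogue of Littelmann's and Berenstein--Zelevinsky's string cone inequalities'', with neither a reference nor a derivation. Those classical results concern $\mathcal{B}(\infty)$, $\mathcal{B}(\lambda)$ and Demazure crystals, not $\mathcal{B}_{\mathbf{i},\mathbf{n}}\subset\mathcal{B}(\lambda_1)\otimes\cdots\otimes\mathcal{B}(\lambda_m)$.

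If you run your own peeling induction, the conditions at stage $j$ are as follows. There is the linear bound $a_j\le\langle\lambda_j+\mathrm{wt}(b'),\alpha_{i_j}^\vee\rangle$. There is also the requirement that $b_{\lambda_j}\otimes b'$ be $\tilde{e}_{i_j}$-highest, i.e.\ (with Kashiwara's tensor rule) $\varepsilon_{i_j}(b')\le n_j$, where $b'$ is the element with parameters $(a_{j+1},\dots,a_m)$. But $\varepsilon_{i_j}(b')$ is only a piecewise-linear function of these parameters, so the resulting region is not visibly polyhedral. You would have to show that each $\varepsilon_i$ is a maximum of finitely many linear forms on the parameter set (a Nakashima--Zelevinsky type formula for these tensor-product crystals), and carry this through the induction. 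Without that, or an explicit appeal to Fujita's polyhedrality result (which is what the paper does), the argument yields a convex body, not a polytope.

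Two further inputs in Step 2 should be made explicit.
\begin{enumerate}
\item Matching the leading exponent with the greedy $\tilde{e}_{i_1},\tilde{e}_{i_2},\dots$ string works only for the lexicographic highest-term order in which $x_1$ is most significant, then $x_2$, and so on. For other total orders (e.g.\ degree-compatible ones) the leading exponent need not be $\mathrm{str}_{\mathbf{i}}(b)$, so the paper's unspecified ``fixed total ordering'' must be taken to be that one.
\item You need a basis of $V^*_{\mathbf{i},\mathbf{n}}$ with the asserted compatibility with the Kashiwara operators. The counting step also needs the equality $\dim V_{\mathbf{i},\mathbf{n}}=\#\mathcal{B}_{\mathbf{i},\mathbf{n}}$ (the generalized Demazure character formula). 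Neither is automatic for generalized Demazure modules, and both must be cited rather than assumed.
\end{enumerate}
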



We now prove the following combinatorial lemma, which will be needed later.

\begin{lemma}\label{+roots}
   For every $1 \leq k<j \leq m$ with $j-k \geq 2$ and every $j<q\leq m$ the following roots are positive:
\[
s_{i_{k+1}} \cdots s_{i_{j-1}} ( \alpha_{i_{j}} ) \quad \text{and} \quad
s_{i_{q}} \cdots s_{i_{j+1}}( \alpha_{i_{j}} )
\]respectively.
\end{lemma}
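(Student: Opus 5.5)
The plan is to reduce both claims to the standard length criterion in the Weyl group: for $u\in W$ and a simple root $\alpha$, one has $u(\alpha)\in R^+$ if and only if $\ell(us_\alpha)=\ell(u)+1$ (see e.g.\ \cite{Hum2}). The second ingredient is that any consecutive subword of a reduced expression is again reduced. If some consecutive subword were not reduced, we could replace it by a shorter expression. That would give an expression of $w$ of length $<m$, contradicting $\ell(w)=m$.

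For the first root, I set $u:=s_{i_{k+1}}\cdots s_{i_{j-1}}$. The hypothesis $j-k\geq 2$ guarantees that this word is nonempty, although the argument works even for the empty word. The expression $s_{i_{k+1}}\cdots s_{i_{j-1}}s_{i_j}$ is a consecutive subword of the reduced expression $s_{i_1}\cdots s_{i_m}$, so it is reduced. Hence $\ell(us_{i_j})=(j-k)=\ell(u)+1$, where $\ell(u)=j-k-1$ because the subword defining $u$ is reduced as well. By the criterion, $u(\alpha_{i_j})=s_{i_{k+1}}\cdots s_{i_{j-1}}(\alpha_{i_j})$ is a positive root.

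For the second root, I set $v:=s_{i_q}\cdots s_{i_{j+1}}$, so that $v^{-1}=s_{i_{j+1}}\cdots s_{i_q}$. By the criterion, $v(\alpha_{i_j})>0$ if and only if $\ell(vs_{i_j})=\ell(v)+1$. Since length is invariant under inversion, this is equivalent to $\ell(s_{i_j}v^{-1})=\ell(v^{-1})+1$. Now $s_{i_j}v^{-1}=s_{i_j}s_{i_{j+1}}\cdots s_{i_q}$ is a consecutive subword of the reduced expression, hence reduced of length $q-j+1$, while $\ell(v^{-1})=q-j$. So $s_{i_q}\cdots s_{i_{j+1}}(\alpha_{i_j})\in R^+$.

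There is no real obstacle here. The only points needing care are the orientation of the product in the second root, handled by passing to the inverse, and citing the length criterion correctly. If a self-contained argument is preferred over citing \cite{Hum2}, I would prove the criterion via the characterization $\ell(u)=\#\{\gamma\in R^+ : u(\gamma)\in R^-\}$ together with the fact that $s_\alpha$ permutes $R^+\setminus\{\alpha\}$.
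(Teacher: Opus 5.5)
Your proof is correct, but it takes a different route from the paper's.

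\textbf{The paper's route.} It argues by contradiction and induction on $k$. If $s_{i_{k+1}}\cdots s_{i_{j-1}}(\alpha_{i_j})$ were negative, then $s_{i_k}$ sends it to a positive root by the induction hypothesis, so it must equal $-\alpha_{i_k}$. This forces $\beta_j=\beta_k$, contradicting the pairwise distinctness of the roots $\beta_1,\dots,\beta_m$ of $R^+\cap w(R^-)$. The second root is handled by a similar identification of $\beta_j$ with $\beta_q$.

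\textbf{Your route.} You use the criterion $u(\alpha)>0 \iff \ell(us_\alpha)=\ell(u)+1$. You combine it with the fact that consecutive subwords of a reduced word are reduced, and so are their reversals, via $\ell(x)=\ell(x^{-1})$.

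\textbf{Comparison.} Both arguments ultimately rest on the fact that $s_\alpha$ permutes $R^+\setminus\{\alpha\}$. Yours needs no induction and treats both roots uniformly. It also makes plain that the hypothesis $j-k\ge 2$ is superfluous, since for $j=k+1$ the root is just $\alpha_{i_j}$.

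Your argument also sidesteps a weak spot in the paper's treatment of the second root. There one first needs positivity of $\gamma:=s_{i_{q-1}}\cdots s_{i_{j+1}}(\alpha_{i_j})$, which requires an unstated induction on $q$. After that, $s_{i_q}(\gamma)<0$ forces $\gamma=\alpha_{i_q}$, not $-\alpha_{i_q}$ as written. This yields $\beta_q=-\beta_j$ rather than $\beta_q=\beta_j$. That is still a contradiction, so the paper's conclusion stands.

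In exchange, the paper's route uses only root-theoretic facts about the $\beta_j$ already recalled earlier in the paper, with no appeal to the length function.
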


\begin{proof}
  We prove the first assertion of the lemma by induction on $k$. 
Recall $\beta_j$'s from Equation \ref{eq:beta}. Suppose $k = 1$ and assume for contradiction that
\[
s_{i_2} \cdots s_{i_{j-1}}(\alpha_{i_{j}})
\]
is a negative root, say $\alpha$, for any $3\leq j\leq m$. Then applying $s_{i_{1}}$ to $\alpha$ yields
$
s_{i_1}(\alpha) = \beta_{j},
$
which is known to be a positive root. This implies $\alpha = -\alpha_{i_1}$. Consequently, we obtain $\beta_{j} = \beta_{1}$. However, by construction, all $\beta_{i}$'s are distinct, so this is a contradiction. Hence, the base case holds.  Now, assume the statement holds for some $k - 1 \geq 2$, i.e., assume that
\[
s_{i_{k}} \cdots s_{i_{j-1}}(\alpha_{i_{j}})
\]
is a positive root for all $j$ with $j-k \geq 1$. We want to prove that
\[
s_{i_{k+1}} \cdots s_{i_{j-1}}(\alpha_{i_{j}})
\]
is also a positive root for $j-k \geq 2$.

Suppose that this root is negative; we denote it by $\beta$. By the induction hypothesis, applying $s_{i_{k}}$ to $\beta$ gives a positive root. This would imply that $\beta = -\alpha_{i_{k}}$, so,
\[
\beta_{j} = s_{i_{1}}\cdots s_{i_{k}}(\beta) = s_{i_{1}}\cdots s_{i_{k}}(-\alpha_{i_k}) = \beta_{k}.
\]
again we got a contradiction as $j\neq k$.   

Now we prove the second assertion of the lemma. Assume for contradiction that \[s_{i_{q}}\cdots s_{i_{j+1}}(\alpha_{i_j})\] is a negative root. Then, we have
\begin{align*}
s_{i_{q-1}} \cdots s_{i_{j+1}}(\alpha_{i_j}) &= -\alpha_{i_q}\\ -\alpha_{i_j}&= s_{i_{j+1}}\cdots s_{i_{q-1}}(\alpha_{i_q})\\s_{i_1}\cdots s_{i_{j-1}}(\alpha_{i_j})&= s_{i_1}\cdots s_{i_{j-1}} s_{i_j}s_{i_{j+1}}\cdots s_{i_{q-1}}(\alpha_{i_q})\\
\beta_j&= \beta_q.
\end{align*}
But $j\neq q$, again we got a contradiction. Hence, the desired root must be positive.
\end{proof}

\begin{remark}
   In \cite[Lemma~4.7]{BCF24a}, the lemma is stated without the condition $j-k \geq 2$; however, the results in the article \cite{BCF24a} where this lemma is used remain unaffected.
\end{remark}

Given $\mathbf{n}\in \mathbb{Z}^m_{\geq0}$. For each $1\leq j\leq m$, we set $
\lambda_{j}:= n_{j}{\omega}_{i_j} 
$
and \[
\ell_j:= n_j + 
\sum_{j < q \leq m} 
 \langle 
\lambda_{q},\ 
s_{i_q} \cdots s_{i_{j+1}}(\alpha_{i_{j}}^\vee) 
\rangle.
\]

Now assume that all the $\ell_j$ are nonzero (which is the case in our generalized Bott-Samelson setting). Then by Lemma \ref{+roots}, all these $\ell_j$'s are positive.

   \begin{remark}
       In fact, in generalized Bott-Samleson case each $\ell_j$ for $1\leq j \leq m$ is positive, see Proposition \ref{prop:+ve}. 
   \end{remark}

Recall the definitions of the linear form $f_0$ and the morphism $\Phi_{\mathbf{i}, \mathbf{n}}$ from Equation \ref{eq:f0}. Define
\[
f_{j} := F_{\beta_{j}}^{\ell_{j}} f_0 
\quad \text{and} \quad 
\phi_{j} := \Phi_{\mathbf{i}, \mathbf{n}}(f_{j}),
\]
where $F_{\beta_{j}} = E_{-\beta_{j}}$ is the root operator corresponding to the negative root $-\beta_{j}$.

\begin{lemma}\label{zero}
    For each $j$, we have
    \[
    E_{\beta_{j}}(s_{i_{t_1(1)}} \cdots s_{i_{k}} v_{\lambda_{k}}) = 0
    \]
    for every $1 \leq k < j \leq m$. In particular, we have the following identity
          \begin{equation}\label{f_j's}
    f_{j} = 
    s_{i_{1}} v_{\lambda_{1}}^* \otimes \cdots \otimes 
    F^{\ell_{j}}_{\beta_{j}}
    \left(s_{i_{1}} \cdots s_{i_{j}} v_{\lambda_{j}}^* 
    \otimes \cdots \otimes 
    w v_{\lambda_{m}}^*\right). 
     \end{equation}
\end{lemma}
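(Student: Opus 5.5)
The plan is a weight argument. First show that $E_{\beta_j}$ kills $s_{i_1}\cdots s_{i_k} v_{\lambda_k}$ for $k<j$; then use the derivation property of the action on tensor products to see that $F_{\beta_j}$ only touches the tensor factors with index $\ge j$.

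Fix $k<j$ and put $u_k := s_{i_1}\cdots s_{i_k}$. The vector $u_k v_{\lambda_k}$ is an extremal weight vector of weight $u_k\lambda_k$ in $V(\lambda_k)$. Moreover, $E_{\beta_j}$ sends $u_k v_{\lambda_k}$ into the weight space of weight $u_k\lambda_k + \beta_j$. Conjugating by $u_k^{-1}$, this weight space is nonzero if and only if the space of weight $\lambda_k + u_k^{-1}\beta_j$ is nonzero. We compute
\[
u_k^{-1}\beta_j = s_{i_k}\cdots s_{i_1}\, s_{i_1}\cdots s_{i_{j-1}}(\alpha_{i_j}) = s_{i_{k+1}}\cdots s_{i_{j-1}}(\alpha_{i_j}).
\]
\begin{itemize}
\item If $j-k\ge 2$, Lemma~\ref{+roots} says this is a positive root.
\item If $j=k+1$, it is just $\alpha_{i_j}$, which is again positive.
\end{itemize}
In both cases $\lambda_k + (\text{positive root})$ is not a weight of $V(\lambda_k)$, because $\lambda_k$ is the highest weight. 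So the weight space is zero, and $E_{\beta_j}(u_k v_{\lambda_k}) = 0$. One could equally argue directly: $u_k^{-1}E_{\beta_j}u_k$ is a root vector for a positive root, so it kills $v_{\lambda_k}$.

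For the identity \eqref{f_j's}, $F_{\beta_j}=E_{-\beta_j}$ acts on the dual tensor product as a derivation, via the Leibniz rule. On each dual factor it acts as minus the transpose of $E_{-\beta_j}$. Pairing against the dual vectors then turns the relevant action into that of $E_{\beta_j}$ on the vectors $u_k v_{\lambda_k}$. Equivalently, by weight considerations, $F_{\beta_j}\,(u_k v^*_{\lambda_k})$ lies in the dual weight space, which by the first part is zero. Hence $F_{\beta_j}$ annihilates each factor $u_k v^*_{\lambda_k}$ with $k<j$. Expanding $F_{\beta_j}^{\ell_j}$ on $f_0$ by the Leibniz rule, every term that hits one of the first $j-1$ factors vanishes. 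What remains is the first $j-1$ factors unchanged, tensored with $F_{\beta_j}^{\ell_j}$ applied to the tail $u_j v^*_{\lambda_j}\otimes\cdots\otimes w v^*_{\lambda_m}$. This is \eqref{f_j's}.

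The main obstacle is bookkeeping of sign and duality conventions. One has to check that the dual action of $F_{\beta_j}$ on $u_k v^*_{\lambda_k}$ really reduces to $E_{\beta_j}$ acting on $u_k v_{\lambda_k}$, rather than to $F_{\beta_j}$ acting on it. This is why I would run the argument through weights: a weight vector $\varphi$ of weight $-\mu$ in $V(\lambda_k)^*$ is sent by $F_{\beta_j}$ to weight $-\mu-\beta_j$, which is $-(\mu+\beta_j)$. This is nonzero only if $\mu+\beta_j$ is a weight of $V(\lambda_k)$, and the computation above excludes this for $\mu = u_k\lambda_k$. Note also that the boundary case $j=k+1$ is handled directly rather than by Lemma~\ref{+roots}, since that lemma requires $j-k\ge 2$.
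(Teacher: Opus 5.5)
Your proposal is correct and is essentially the paper's argument. Both proofs rest on the positivity of $s_{i_{k+1}}\cdots s_{i_{j-1}}(\alpha_{i_j})$, obtained from Lemma~\ref{+roots} when $j-k\ge 2$ and directly when $j=k+1$, to show that $E_{\beta_j}$ kills the extremal vector $s_{i_1}\cdots s_{i_k}v_{\lambda_k}$, and then pass to $f_j$ by duality. The differences are cosmetic: you conclude via ``$\lambda_k$ plus a positive root is not a weight'' after conjugating by $(s_{i_1}\cdots s_{i_k})^{-1}$, where the paper uses $\langle s_{i_1}\cdots s_{i_k}\lambda_k,\beta_j^\vee\rangle\ge 0$ together with extremality, and you write out the Leibniz-rule and weight bookkeeping that the paper compresses into ``by duality.''
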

\begin{proof}
Lemma \ref{+roots} shows that
\[
\langle 
s_{i_{1}} \cdots s_{i_{k}} \lambda_{k},\ 
\beta_{j}^\vee 
\rangle = 
\langle 
\lambda_{k},\ 
s_{i_{k+1}} \cdots s_{i_{j-1}} (\alpha_{i_{j}})^\vee
\rangle \geq 0 
\quad \text{if } j-k\geq 2.
\]
If $j-k = 1$, then
\begin{align*}
\langle 
s_{i_{1}} \cdots s_{i_{k}} \lambda_{k},\ 
\beta_{{j}}^\vee 
\rangle 
&= \langle 
s_{i_{1}} \cdots s_{i_{k}} \lambda_{k},\ 
s_{i_1} \cdots s_{i_k} (\alpha_{i_{k+1}}^\vee) 
\rangle \\
&= \langle 
\lambda_{k},\ 
(\alpha_{i_{k+1}}^\vee) 
\rangle \geq 0.
\end{align*}
Thus, in either case, we conclude that
\[
E_{\beta_{j}}\left(s_{i_{1}} \cdots s_{i_{k}} v_{\lambda_{k}}\right) = 0
\]
since $s_{i_{1}} \cdots s_{i_{k}} v_{\lambda_{k}}$ is an extremal weight vector. The result then follows by duality.
\end{proof}

\medskip

Recall now the definition of the weight vector $v_0$ from the beginning of Section \ref{f_0}.

\begin{lemma}\label{nonzero}
    For every $1 \leq j \leq m$, we have:
    \[
    E_{\beta_{j}}^{\ell_{j}+1}(v_0) = 0,
    \quad \text{and} \quad
    E_{\beta_{j}}^{\ell_{j}}(v_0) \neq 0.
    \]
\end{lemma}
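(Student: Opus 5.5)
The plan is to use the tensor product structure of $v_0$ together with the Leibniz rule for $E_{\beta_j}$ acting on tensor products. Since $E_{\beta_j}$ is a Lie algebra element, it acts on
\[
v_0=v^{(1)}\otimes\cdots\otimes v^{(m)},\qquad v^{(k)}:=s_{i_1}\cdots s_{i_k}v_{\lambda_k},
\]
by derivations. Hence $E_{\beta_j}^N(v_0)$ is a sum, over multi-indices $(N_1,\dots,N_m)$ with $\sum_k N_k=N$, of multinomial coefficients times
\[
E_{\beta_j}^{N_1}v^{(1)}\otimes\cdots\otimes E_{\beta_j}^{N_m}v^{(m)}.
\]
So the problem reduces to understanding, for each $k$, the $\beta_j$-string through the extremal weight vector $v^{(k)}$.

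First, for $k<j$, Lemma~\ref{zero} gives $E_{\beta_j}v^{(k)}=0$. Only the factors with $k\ge j$ can therefore contribute.

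Second, for $k\ge j$, $v^{(k)}$ is an extremal weight vector of weight $\mu_k:=s_{i_1}\cdots s_{i_k}\lambda_k$. The $\mathfrak{sl}_2$-triple attached to $\beta_j$ then acts on the line $\mathbb{C}v^{(k)}$ as on a lowest or highest vector of an $\mathfrak{sl}_2$-string, according to the sign of $\langle\mu_k,\beta_j^\vee\rangle$. Compute
\[
\langle \mu_k,\beta_j^\vee\rangle=\langle \lambda_k,\ s_{i_k}\cdots s_{i_{j}}(\alpha_{i_j}^\vee)\rangle .
\]
For $k=j$ this equals $-\langle\lambda_j,\alpha_{i_j}^\vee\rangle=-n_j$. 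For $k>j$ it equals $-\langle\lambda_k,\ s_{i_k}\cdots s_{i_{j+1}}(\alpha_{i_j}^\vee)\rangle$, which is $\le 0$ by the second assertion of Lemma~\ref{+roots}. So each such $v^{(k)}$ is a lowest weight vector for the $\beta_j$-$\mathfrak{sl}_2$, because $F_{\beta_j}v^{(k)}=0$ by extremality: weight $\mu_k-\beta_j$ is not a weight of $V(\lambda_k)$. Standard $\mathfrak{sl}_2$ theory on an extremal vector then gives
\[
E_{\beta_j}^{c_k+1}v^{(k)}=0\quad\text{and}\quad E_{\beta_j}^{c_k}v^{(k)}\neq0,\qquad c_k:=-\langle\mu_k,\beta_j^\vee\rangle\ge0 .
\]
By definition of $\ell_j$ we have $\sum_{k\ge j}c_k=\ell_j$.

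Third, combine these facts. In the Leibniz expansion of $E_{\beta_j}^{\ell_j+1}v_0$, every term has some $N_k>c_k$ (or $N_k\ge1$ with $k<j$), so it vanishes. For $E_{\beta_j}^{\ell_j}v_0$, the only surviving term is $N_k=c_k$ for $k\ge j$ and $N_k=0$ for $k<j$. Its coefficient is a positive multinomial, and it is a tensor product of nonzero vectors, hence nonzero. No cancellation can occur, since there is only one term.

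The main obstacle is the second step, namely justifying that $v^{(k)}$ is killed by $F_{\beta_j}$ and has string length exactly $c_k$. I would argue by $W$-conjugacy: $v^{(k)}$ is a translate of the highest weight vector, and extremal weights $\mu$ satisfy that $\mu\pm t\beta$ is a weight only within the $s_\beta$-string between $\mu$ and $s_\beta\mu$.
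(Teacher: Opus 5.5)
Your proof is correct and follows the same route as the paper. Both use Lemma~\ref{+roots} to get $\langle s_{i_1}\cdots s_{i_k}\lambda_k,\beta_j^\vee\rangle\le 0$ for $k\ge j$, observe that the resulting extremal-vector string lengths sum to $\ell_j$, and invoke Lemma~\ref{zero} for the factors with $k<j$. Your explicit Leibniz expansion, in which exactly one nonzero term survives in $E_{\beta_j}^{\ell_j}(v_0)$, also proves the nonvanishing half of the statement, which the paper's proof leaves implicit.
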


\begin{proof}
For $1\leq j\leq \ell\leq m$, we set
\[
a_{j \ell} := 
\langle 
s_{i_{1}} \cdots s_{i_{\ell}} \lambda_{\ell},\ 
\beta_{j}^\vee 
\rangle.
\]
Then,
\[
a_{j q} = 
\begin{cases}
    -n_{j} & \text{if } j=q, \\
    -\langle 
    \lambda_{q},\ 
    s_{i_{q}} \cdots s_{i_{j+1}}(\alpha_{i_{j}}^\vee) 
    \rangle & \text{if } j<q\leq m.
\end{cases}
\]
By Lemma \ref{+roots}, each $a_{jq} \leq 0$. Since $s_{i_{1}} \cdots s_{i_{q}} v_{\lambda_{q}}$ is an extremal weight vector, we have:
\[
E_{\beta_{j}}^{1 - a_{jq}}\left( s_{i_{1}} \cdots s_{i_{q}} v_{\lambda_{q}} \right) = 0
\]
for all $j \leq q \leq m$. Now, by definition,
\[
\ell_{j} := \sum_{q = j}^{m} -a_{jq},
\]
which implies:
\[
E_{\beta_{j}}^{\ell_j+1}
\left( 
s_{i_{1}} \cdots s_{i_{j}} v_{\lambda_{j}} \otimes \cdots \otimes w v_{\lambda_{m}} 
\right) = 0. 
\]
Finally, Lemma \ref{zero} implies the desired statement for $v_0$.
\end{proof}

\begin{lemma}\label{Section2}
The sections $\phi_{j} \in H^0(Z({\tilde{w}}), \mathcal{L}_{\mathbf{i}, \mathbf{n}})$ are nontrivial for all $1\leq j \leq m$.
\end{lemma}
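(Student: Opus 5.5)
Since $\Phi_{\mathbf{i},\mathbf{n}}$ is an isomorphism of vector spaces from $V^*_{\mathbf{i},\mathbf{n}}$ onto $H^0(Z(\tilde{w}),\mathcal{L}_{\mathbf{i},\mathbf{n}})$, it suffices to show that each linear form $f_j = F_{\beta_j}^{\ell_j} f_0$ is nonzero as an element of $V^*_{\mathbf{i},\mathbf{n}}$. We therefore reduce to the dual statement and produce a vector of $V_{\mathbf{i},\mathbf{n}}$ on which $f_j$ does not vanish.

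The natural candidate is $E_{\beta_j}^{\ell_j}(v_0)$. First, $E_{\beta_j}^{\ell_j}(v_0) \in V_{\mathbf{i},\mathbf{n}}$. Indeed, $V_{\mathbf{i},\mathbf{n}}$ is the span of $\Psi_{\mathbf{i},\mathbf{n}}(Z(\tilde{w}))$, and $U_{\beta_j}[v_0] = \Psi_{\mathbf{i},\mathbf{n}}(U_{\beta_j}\tilde{x}) \subset \Psi_{\mathbf{i},\mathbf{n}}(Z(\tilde{w}))$, since $U_{\beta_j}\tilde{x}\subset B\tilde{x}$. Expanding
\[
\exp(tE_{\beta_j})v_0=\sum_{a\ge 0}\frac{t^a}{a!}E_{\beta_j}^a v_0,
\]
which is a finite sum by Lemma~\ref{nonzero}, and using a Vandermonde argument in $t$, each $E_{\beta_j}^a v_0$ lies in $V_{\mathbf{i},\mathbf{n}}$.

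Next, I would evaluate $f_j$ on this vector. By the defining duality of the contragredient action, $(F_{\beta_j}^{\ell_j}f_0)(v)$ equals, up to the sign $(-1)^{\ell_j}$ and a normalization of the root vectors, $f_0(E_{\beta_j}^{\ell_j}v)$. Hence
\[
f_j\bigl(E_{\beta_j}^{\ell_j}v_0\bigr)=\pm f_0\bigl(E_{\beta_j}^{2\ell_j}v_0\bigr).
\]
This pairing is the wrong one, because the weights do not match. The correct pairing is with a vector of weight $\mathrm{wt}(v_0)-\ell_j\beta_j$, namely $F_{\beta_j}^{\ell_j}v_0$, which is not obviously in $V_{\mathbf{i},\mathbf{n}}$. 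I would therefore instead work with the $\mathfrak{sl}_2$-triple $(E_{\beta_j},F_{\beta_j},\beta_j^\vee)$ acting on the tensor product. By Lemma~\ref{nonzero}, the vector $v_0$ satisfies $E_{\beta_j}^{\ell_j}v_0\neq0$ and $E_{\beta_j}^{\ell_j+1}v_0=0$. By Lemma~\ref{zero}, the first $j-1$ tensor factors of $v_0$ are killed by $E_{\beta_j}$. The computation in Lemma~\ref{nonzero} also shows that $v_0$ has $\beta_j^\vee$-weight $\sum_q a_{jq}=-\ell_j$. So $v_0$ is a lowest weight vector of an $\mathfrak{sl}_2$-string of length $\ell_j+1$, and $E_{\beta_j}^{\ell_j}v_0$ is the top of that string. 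Now $f_0$ is the lowest weight vector of the dual string, which has weight $+\ell_j$ in $V^*$, so $F_{\beta_j}^{\ell_j}f_0$ is its bottom. By $\mathfrak{sl}_2$ theory, pairing the top of one string with the bottom of its dual string is nonzero:
\[
\bigl(F_{\beta_j}^{\ell_j}f_0\bigr)\bigl(E_{\beta_j}^{\ell_j}v_0\bigr)=\pm f_0\bigl(F_{\beta_j}^{\ell_j}E_{\beta_j}^{\ell_j}v_0\bigr)=\pm c\, f_0(v_0),\qquad c=(\ell_j!)^2\neq0,
\]
since $F^{\ell}E^{\ell}$ acts on a lowest weight vector of weight $-\ell$ by the scalar $(\ell!)^2$. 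As $f_0(v_0)=1$, this gives $\phi_j=\Phi_{\mathbf{i},\mathbf{n}}(f_j)\neq 0$.

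The main obstacle is the bookkeeping that $v_0$ really is an $\mathfrak{sl}_2$ lowest weight vector for the $\beta_j$-triple, with $F_{\beta_j}v_0=0$ as well as $\beta_j^\vee$-weight $-\ell_j$. I would check this factor by factor. Each factor $s_{i_1}\cdots s_{i_q}v_{\lambda_q}$ is extremal. For $q\ge j$ its pairing $a_{jq}$ with $\beta_j^\vee$ is $\le 0$ by Lemma~\ref{+roots}, so $F_{\beta_j}$ kills it. For $q<j$ the pairing is $\ge0$ by the argument of Lemma~\ref{zero}, so $E_{\beta_j}$ kills it, but $F_{\beta_j}$ might not. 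For those factors I would avoid needing $F_{\beta_j}v_0=0$. Instead I would restrict to the tensor of the last $m-j+1$ factors, where the computation above holds cleanly, using the factorized form \eqref{f_j's} of $f_j$ and the fact that the first $j-1$ factors are $E_{\beta_j}$-invariant and pair with their duals to $1$.
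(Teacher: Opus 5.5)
Your argument is correct and is essentially the paper's: the paper evaluates $f_j$ at $\exp(E_{\beta_j})v_0$, where by weights only the term $E_{\beta_j}^{\ell_j}v_0/\ell_j!$ contributes. So both proofs reduce to $f_j(E_{\beta_j}^{\ell_j}v_0)\neq 0$. The paper simply asserts this from $E_{\beta_j}^{\ell_j}v_0\neq 0$, whereas your $\mathfrak{sl}_2$ computation on the last $m-j+1$ tensor factors actually justifies it, since that tail is a lowest weight vector of $\beta_j^\vee$-weight $-\ell_j$.

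Drop the middle detour, though. The contragredient action gives $(F_{\beta_j}^{\ell_j}f_0)(v)=(-1)^{\ell_j}f_0(F_{\beta_j}^{\ell_j}v)$, not $f_0(E_{\beta_j}^{\ell_j}v)$, so the weights of $f_j$ and $E_{\beta_j}^{\ell_j}v_0$ do match. Also, $f_0$ is the top (highest), not the lowest, vector of the dual string.
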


\begin{proof}
Let $u_{j} = \exp(E_{\beta_{j}}) \in U_{\beta_{j}}$. By Lemmas \ref{zero} and \ref{nonzero}, we have:
\begin{align*}
u_{j}(v_0) 
&= E_{\beta_{j}}^0(v_0) + \cdots + 
\frac{E_{\beta_{j}}^r(v_0)}{r!} + \cdots + 
\frac{E_{\beta_{j}}^{\ell_{j}}(v_0)}{\ell_{j}!} \\
&= v_0 + \cdots + 
s_{i_{1}} v_{\lambda_{1}} \otimes \cdots \otimes 
s_{i_{1}} \cdots s_{i_{j-1}} v_{\lambda_{j-1}} \otimes \\
& \quad \quad\quad\quad\quad\quad \frac{E_{\beta_{j}}^r}{r!}
\left( s_{i_{1}} \cdots s_{i_{j}} v_{\lambda_{j}} 
\otimes \cdots \otimes w v_{\lambda_{m}} \right) 
+ \cdots
\end{align*}
By Equality \ref{f_j's}, we know that $f_{j}(E_{\beta_{j}}^r(v_0)) = 0$ for all $r < \ell_{j}$. Moreover, since $E_{\beta_{j}}^{\ell_{j}}(v_0) \neq 0$ by Lemma \ref{nonzero}, it follows that
\[
f_{j}(E_{\beta_j}^{\ell_{j}}(v_0)) \neq 0.
\]
Hence, we conclude that $f_{j}(u_{j}(v_0)) \neq 0$, proving that $\phi_{j} \neq 0$.
\end{proof}

\begin{proposition}\label{polynomials}
For each $ 1 \leq j \leq m $, we have
\[
\frac{\phi_{j}}{\phi_0}
= a_{j} x_{j}^{\ell_{j}}
\in \mathbb{C}[x_{1}, x_{2}, \dots, x_{m}]
\]
for some $ a_{j} \in \mathbb{C} $.
\end{proposition}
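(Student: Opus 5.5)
Since $\phi_0$ does not vanish on $B\tilde{x}\simeq U_1\times\cdots\times U_m$ (Lemma~\ref{Section}), the ratio $\phi_j/\phi_0$ is a regular function on this affine chart. Its value at $u=(\exp(x_1E_{\beta_1}),\dots,\exp(x_mE_{\beta_m}))$ is computed by the usual formula for the $P_{i_1}$-equivariant isomorphism $\Phi_{\mathbf{i},\mathbf{n}}$:
\[
\frac{\phi_j}{\phi_0}(u)=\frac{f_j\bigl(\exp(x_1E_{\beta_1})\cdots\exp(x_mE_{\beta_m})\,v_0\bigr)}{f_0\bigl(\exp(x_1E_{\beta_1})\cdots\exp(x_mE_{\beta_m})\,v_0\bigr)} .
\]
Here $f_0$ is the weight-dual of $v_0$ and $v_0$ is the lowest piece of $uv_0$ in the $U$-direction. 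So the denominator equals $f_0(v_0)$, a nonzero constant, because every $E_{\beta}$ with $\beta>0$ strictly raises weight. Consequently, polynomiality of the ratio is automatic, since each $E_{\beta_k}$ acts nilpotently on the finite-dimensional module. The real content of the statement is that only the single monomial $x_j^{\ell_j}$ survives.

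\textbf{Step 1 (weight argument).} We have $f_j=F_{\beta_j}^{\ell_j}f_0$, which is a weight vector of weight $\mathrm{wt}(v_0)+\ell_j\beta_j$, up to the dual sign convention. Expanding the product of exponentials, the coefficient of $x_1^{a_1}\cdots x_m^{a_m}$ in $uv_0$ is a vector of weight $\mathrm{wt}(v_0)+\sum_k a_k\beta_k$. Hence $f_j$ can pair nontrivially only with monomials satisfying
\[
\sum_k a_k\beta_k=\ell_j\beta_j .
\]

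\textbf{Step 2 (only the pure power survives).} This is the main obstacle, because the $\beta_k$ need not be linearly independent, so Step~1 alone does not force $a=\ell_j e_j$. The plan is to use the explicit form \eqref{f_j's} of $f_j$ from Lemma~\ref{zero}. The first $j-1$ tensor factors of $f_j$ are the extremal dual vectors $s_{i_1}\cdots s_{i_k}v^*_{\lambda_k}$ for $k<j$, and these pair only with the corresponding extremal vectors.

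\emph{Operators applied first.} The operators $E_{\beta_k}$ with $k>j$ act first on $v_0$. I expect to show that any such operator with $a_k>0$ moves some factor with index $\ge k$ away from its extremal vector. Showing that this cannot be undone by the remaining $E_{\beta_{k'}}$ requires the positivity statements of Lemma~\ref{+roots} together with the convexity/string structure of Demazure modules. Concretely, I will argue inductively from the right, using the order of the exponentials, that $a_k=0$ for all $k>j$.

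\emph{Operators applied last.} For $k<j$, Lemma~\ref{zero} shows that $E_{\beta_j}$ kills the first factors. For $E_{\beta_k}$ itself acting on factor $k$ or earlier, the pairing with the fixed extremal duals forces $a_k=0$. This step uses the fact that $s_{i_1}\cdots s_{i_k}v_{\lambda_k}$ is the extremal weight vector in its module and is moved by $E_{\beta_k}$ if $n_k>0$.

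Once all other $a_k$ vanish, Step~1 forces $a_j=\ell_j$.

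\textbf{Step 3 (the coefficient).} The surviving coefficient is
\[
a_j=\frac{f_j\bigl(E_{\beta_j}^{\ell_j}v_0\bigr)}{\ell_j!\,f_0(v_0)} .
\]
By Lemma~\ref{nonzero} and the computation in Lemma~\ref{Section2}, this is a nonzero scalar, consistent with Lemma~\ref{Section2}. This gives $\phi_j/\phi_0=a_jx_j^{\ell_j}$.
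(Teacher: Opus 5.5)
Your Step 1, the constancy of $f_0(uv_0)$ and the polynomiality claim are all fine. You are also right that weights alone cannot force $a=\ell_je_j$. The gap is Step 2, and it cannot be closed. Both of its claims are false, and the proposition itself fails in the coordinates $u=\exp(x_1E_{\beta_1})\cdots\exp(x_mE_{\beta_m})$ that you and the paper use.

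Take $G=SL_3$ and $\mathbf{i}=(1,2,1)$, so $\beta_1=\alpha_1$, $\beta_2=\alpha_1+\alpha_2=\beta_1+\beta_3$ and $\beta_3=\alpha_2$. Take $\mathbf{n}=(1,0,1)$. This is the pullback of the very ample bundle $\mathcal{L}_{1,\alpha_1}\otimes\mathcal{L}_{2,\alpha_1}$ on $\widehat{X}(s_1,s_2s_1)$, and $\ell_1=\ell_2=\ell_3=1$. Write $E_{ab}$ for matrix units, so $E_{\beta_1}=E_{12}$, $E_{\beta_3}=E_{23}$, and $E_{\beta_2}$ is a nonzero multiple of $E_{13}$ (absorb the constant into $x_2$). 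Up to signs, $v_0=e_2\otimes 1\otimes e_3$, $f_0=e_2^*\otimes 1\otimes e_3^*$ and $f_2=F_{\beta_2}f_0=\pm e_2^*\otimes 1\otimes e_1^*$. Moreover
\[
\exp(x_1E_{12})\exp(x_2E_{13})\exp(x_3E_{23})=I+x_1E_{12}+x_3E_{23}+(x_2+x_1x_3)E_{13},
\]
hence $\phi_2/\phi_0=\pm(x_2+x_1x_3)$.

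The monomial $x_1x_3$ arises as follows. $E_{\beta_3}$ moves the third factor $e_3\mapsto e_2$. Then $E_{\beta_1}$, although $1<j=2$, acts on that same third factor, $e_2\mapsto e_1$, landing exactly where $f_2$ has $e_1^*$. So $a_3\neq 0$ for $k=3>j$, contradicting your ``operators applied first'' claim. Your ``operators applied last'' argument overlooks that $E_{\beta_k}$ with $k<j$ can act on factors of index $\ge j$. Neither Lemma~\ref{+roots} nor convexity of Demazure modules can exclude this.

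The paper's proof has exactly the hole you flagged. It checks $f_j(U_{\beta_k}v_0)=0$ for each single $k\neq j$, which only excludes pure powers of $x_k$. It then concludes $\phi_j/\phi_0\in\mathbb{C}[x_j]$, ignoring the mixed monomials produced by non-commuting root vectors.

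Reversing the order of the exponentials repairs the example above, but not in general. Take $\mathbf{n}=(1,1,1)$ with the same $\mathbf{i}$; now, up to sign, $v_0=e_2\otimes(e_2\wedge e_3)\otimes e_3$ and $\ell_2=2$. In either order one gets $\phi_2/\phi_0\propto x_2(x_2\pm x_1x_3)$. So any correct route to the simplex statement has to control the $v_\beta$-leading terms of suitable sections for the chosen monomial order, rather than prove this monomial identity.
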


\begin{proof}
Let $\phi \in H^0(Z({\tilde{w}}), \mathcal{L}_{\mathbf{i}, \mathbf{n}})$. By Lemma \ref{Section}, the quotient $\frac{\phi}{\phi_0}$ is a regular function on $B\tilde{x}$, and hence an element of
$
\mathbb{C}[x_{1}, x_{2},\dots, x_{m}].
$
Using the same argument as in the proof of Lemma \ref{Section2}, and the fact that the roots $\beta_{j}$'s are pairwise distinct, we find that
\[
f_{j}(U_{\beta_{k}} v_0) = 0 \quad \text{for all } j \neq k.
\]
Thus, the quotient $\frac{\phi_{j}}{\phi_0}$ lies in $\mathbb{C}[x_{j}]$.
Finally, observe that
\[
f_{j}(u_{\beta_{j}} v_0) 
= f_{j}\left(\frac{(a E_{\beta_{j}})^{\ell_{j}} v_0}{\ell_{j}!}\right)
\quad \text{for some } a \in \mathbb{C},
\]
where $u_{\beta_{j}} = \exp(a E_{\beta_{j}})$. This completes the proof.
\end{proof}

\begin{corollary}
The polytope $\Delta_{\mathbf{i}, \mathbf{n}}$ contains a simplex of size
\[
k = \min \{ \ell_{j} : 1 \leq j \leq m \}.
\]
\end{corollary}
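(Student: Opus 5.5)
The plan is to exhibit $m+1$ points of $\Delta_{\mathbf{i},\mathbf{n}}$ forming a simplex. Each point will be the valuation of a nonzero section $\phi_j/\phi_0$. Convexity of the Newton--Okounkov body then gives the whole simplex.

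\textbf{Step 1: the vertices.} Since $\phi_0$ is a section of $\mathcal{L}_{\mathbf{i},\mathbf{n}}$ itself, the point $(1, v_\beta(\phi_0/\phi_0)) = (1,0)$ lies in the semigroup $S(v_\beta,\phi_0)$, so the origin $0\in\Delta_{\mathbf{i},\mathbf{n}}$. By Lemma \ref{Section2}, each $\phi_j$ is a nonzero section of $\mathcal{L}_{\mathbf{i},\mathbf{n}}$. By Proposition \ref{polynomials}, $\phi_j/\phi_0 = a_j x_j^{\ell_j}$, and $a_j\neq 0$ precisely because $\phi_j\neq 0$. The valuation of a monomial is minus its exponent vector, so
\[
v_\beta(\phi_j/\phi_0) = -\ell_j e_j ,
\]
where $e_j$ is the $j$-th standard basis vector of $\mathbb{Z}^m$. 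Thus $(1,-\ell_j e_j)\in S$, and hence $-\ell_j e_j\in\Delta_{\mathbf{i},\mathbf{n}}$ for every $1\le j\le m$.

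\textbf{Step 2: convexity.} The body $\Delta_{\mathbf{i},\mathbf{n}}$ is the slice at height $1$ of a closed convex cone, so it is convex; alternatively, Theorem \ref{Fujita} says it is a convex polytope. It therefore contains the convex hull $\mathrm{conv}\{0,-\ell_1e_1,\dots,-\ell_me_m\}$.

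\textbf{Step 3: shrinking to a uniform simplex.} Set $k=\min_j \ell_j$; all $\ell_j$ are positive by Lemma \ref{+roots} and the nonvanishing assumption. Since $0\le k\le \ell_j$, each point $-ke_j$ lies on the segment from $0$ to $-\ell_je_j$. Hence $\mathrm{conv}\{0,-ke_1,\dots,-ke_m\}\subset\Delta_{\mathbf{i},\mathbf{n}}$. This is the image of the standard simplex of size $k$ under the unimodular map $x\mapsto -x$, which is what ``simplex of size $k$'' means in the sense of \cite{KK19} and \cite{BCF24a}.

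\textbf{Main obstacle.} Almost all the work sits in Proposition \ref{polynomials}, which is already available. The remaining point to check carefully is the sign convention: $v_\beta$ is minus the maximal exponent, so the simplex points in the negative orthant. This is harmless, because the Gromov-width argument of \cite[Corollary 11.4]{KK19} only needs a unimodular image of the standard simplex. I would also note explicitly that the conclusion uses only $\ell_j>0$, not very ampleness of $\mathcal{L}_{\mathbf{i},\mathbf{n}}$. That is why it applies to the nef pullback $\tilde{\pi}^*\mathcal{L}$ via Proposition \ref{prop:+ve}.
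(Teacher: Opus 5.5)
Your proposal is correct and follows the paper's argument. The origin comes from $\phi_0$, the points $-\ell_j e_j$ come from Proposition~\ref{polynomials}, and convexity of $\Delta_{\mathbf{i},\mathbf{n}}$ finishes it; you also make explicit two steps the paper leaves implicit, namely $a_j\neq 0$ because $\phi_j\neq 0$ (Lemma~\ref{Section2}), and the shrinking of $\mathrm{conv}\{0,-\ell_1e_1,\dots,-\ell_me_m\}$ to the uniform simplex of size $k$.
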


\begin{proof}
First note that $\Delta_{\mathbf{i}, \mathbf{n}}$ contains the origin, since $v_\beta(\phi_0) = 0$. From the definition of valuation $v_\beta$ and Proposition \ref{polynomials}, we obtain
\[
v_\beta(\phi_{j}) = -\left(0, \dots, 0, \ell_{j}, 0, \dots, 0\right) = -\ell_{j} e_{j}.
\]
Together with the convexity of $\Delta_{\mathbf{i}, \mathbf{n}}$ (Theorem \ref{Fujita}), this implies the corollary.
\end{proof} 


\noindent {\bf Acknowledgments.} We sincerely thank Stéphanie Cupit-Foutou for her valuable and helpful discussions.


\end{document}